\newtheorem{Theorem}{Theorem}[section]
\newtheorem{Theoremx}{Theorem}
\newtheorem{Potential Theorem}[Theorem]{Potential Theorem}
\newtheorem{Lemma}[Theorem]{Lemma}
\newtheorem{Corollary}[Theorem]{Corollary}
\newtheorem{Proposition}[Theorem]{Proposition}
\theoremstyle{definition}
\newtheorem{Example}[Theorem]{Example}
\newtheorem{Condition}[Theorem]{Condition}
\newtheorem{Definition}[Theorem]{Definition}
\newtheorem{Question}[Theorem]{Question}
\theoremstyle{remark} 
\newtheorem{Remark}[Theorem]{Remark}
\DeclareMathOperator{\sdim}{sdim}
\DeclareMathOperator{\height}{ht}
\DeclareMathOperator{\Max}{Max}
\DeclareMathOperator{\Supp}{Supp}
\DeclareMathOperator{\Ann}{Ann}
\DeclareMathOperator{\yyy}{div}
\DeclareMathOperator{\Hom}{Hom}
\DeclareMathOperator{\Spec}{Spec}
\DeclareMathOperator{\Min}{Min}
\DeclareMathOperator{\Tor}{Tor}
\DeclareMathOperator{\im}{im}
\DeclareMathOperator{\rank}{rank}
\DeclareMathOperator{\frk}{frk}
\DeclareMathOperator{\s}{s}
\DeclareMathOperator{\Assh}{Assh}
\DeclareMathOperator{\sr}{sr}
\DeclareMathOperator{\eHK}{e_{HK}}
\DeclareMathOperator{\ann}{ann}
\renewcommand{\leq}{\leqslant}
\renewcommand{\geq}{\geqslant}
\def\fsl{\operatorname{fs}} 
\def\QP{\mathcal{Q(P)}}
\def\c{\mathfrak{c}}
\def\PP{\mathcal{P}}
\def\p{\mathfrak{p}}
\def\q{\mathfrak{q}}
\def\m{\mathfrak{m}}
\def\a{\mathfrak{a}}
\def\b{\mathfrak{b}}
\def\Z{\mathbb{Z}}
\def\R{\mathbb{R}}
\def\Q{\mathbb{Q}}
\def\F{\mathbb{F}}
\def\D{\mathscr{D}}
\def\ds{\displaystyle}
\def\lra{\longrightarrow}
\def\div{\yyy}
\def\P{\mathcal{P}}
\renewcommand{\ge}{\geqslant}
\renewcommand{\le}{\leqslant}
\begin{document}

\title{Global Frobenius Betti numbers and F-splitting ratio}
\author{Alessandro De Stefani}
\address{Department of Mathematics, University of Nebraska, 203 Avery Hall, Lincoln, NE 68588}
\email{adestefani2@unl.edu}
\author{Thomas Polstra}
\address{Department of Mathematics, University of Utah, Salt Lake City, UT 84102 USA}
\email{polstra@math.utah.edu}
\author{Yongwei Yao}
\address{Department of Mathematics and Statistics, Georgia State University, Atlanta, Georgia 30303, USA}
\email{yyao@gsu.edu}

\maketitle

\begin{abstract} We extend the notion of Frobenius Betti numbers and F-splitting ratio to large classes of finitely generated modules over rings of prime characteristic, which are not assumed to be local. We also prove that the strong F-regularity of a pair $(R,\D)$, where $\D$ is a Cartier algebra, is equivalent to the positivity of the global F-signature $\s(R,\D)$ of the pair. This extends a result proved in \cite{DSPY}, by removing an extra assumption on the Cartier algebra.

 \end{abstract}

\section{Introduction}

Throughout this article, $R$ will denote a commutative Noetherian ring with unity, and of prime characteristic $p>0$. For $e \in \Z_{>0}$, let $F^e:R \to R$ be the $e$-th iterate of the Frobenius endomorphism, that is, the $p^e$-th power map on $R$. Moreover, let $F^e_*R$ denote $R$ as a module over itself, under restriction of scalars via $F^e$. Unless otherwise stated, we assume that $R$ is F-finite, that is, the Frobenius is a finite map. The goal of this article is to extend to the global setting numerical invariants related to Frobenius, that are typically only defined for local rings. This follows the path started in \cite{DSPY}, where these authors extended the definitions of Hilbert-Kunz multiplicity and F-signature to rings that are not necessarily local.

The first invariants that we consider in this article are the {\it Frobenius Betti numbers} $\beta_i^F(R)$ (see Sections \ref{Background} and \ref{Section betas} for definitions). When $(R,\m,k)$ is local, these invariants provide an asymptotic measure of the Betti numbers of the modules $F^e_*R$ (see \cite{DSHNB}):
\[
\ds \beta_i^F(R) = \lim_{e \to \infty} \frac{\lambda_R(\Tor_i^R(k,F^e_*R))}{\rank(F^e_*R)}.
\]
Here, $\lambda_R(-)$ denotes the length of an $R$-module. In the local case, it is known that the vanishing of Frobenius Betti numbers detects regularity of the ring \cite{AberbachLi}. To prove our results about Frobenius Betti numbers, we introduce certain auxiliary invariants, that we call {\it Frobenius Euler characteristics}, and we denote by $\chi_i^F(R)$. If $(R,\m,k)$ is local, these invariants are defined as
\[
\ds \chi_i^F(R) = \lim_{e \to \infty} \left[ \sum_{j=0}^i (-1)^{i-j} \frac{\lambda_R(\Tor_j^R(k,F^e_*R))}{\rank(F^e_*R)}\right].
\]

We define Frobenius Betti numbers and Frobenius Euler characteristic for rings that are not necessarily local, giving an appropriate interpretation of the numerators in the limits above as minimal number of generators of syzygies in appropriate resolutions, which we call minimal. We show existence of the resulting limits, and exhibit some relations with invariants coming from the localizations and with the singularities of $R$. Our first main result is the following:
\begin{Theoremx} [See Theorems \ref{global betas}, \ref{THM beta regular} and Proposition \ref{prop graded betti}] \label{THMx betas}
Let $R$ be an F-finite domain of prime characteristic $p>0$, not necessarily local. Then
\begin{enumerate}
\item The limits $\beta_i^F(R)$ and $\chi^F_i(R)$ exist.
\item \label{THMx betas 2} We have equality $\chi^F_i(R) = \max\{\chi_i^F(R_P)\mid P \in \Spec(R)\}$.
\item \label{THMx betas 3} $R$ is regular if and only if $\beta_i^F(R) = 0$ for some $i>0$.
\item If $R$ is a positively graded algebra over a local ring $(R_0,\m_0)$, then $\beta_i^F(R) = \beta_i^F(R_\m)$ and $\chi_i^F(R) = \chi_i^F(R_\m)$, where $\m = \m_0 + R_{>0}$.
\end{enumerate}
\end{Theoremx}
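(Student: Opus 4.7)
The unified plan is to reduce each of the four assertions to the already known local case via upper semicontinuity of minimal numbers of generators. For any finitely generated $R$-module $M$, the function $P \mapsto \mu_{R_P}(M_P)$ on $\Spec(R)$ is upper semicontinuous and takes only finitely many values, so it attains its maximum on a closed subset. Setting $\mu^*(M) := \max_{P \in \Spec(R)} \mu_{R_P}(M_P)$, one can build a global ``minimal'' free resolution of $F^e_*R$ whose $i$-th syzygy has minimal number of generators $\mu^*(\operatorname{Syz}_i F^e_*R)$; this is the natural replacement for $\lambda_R(\Tor_i^R(k,F^e_*R))$ when $R$ is not local, and its alternating sum replaces the inner expression in the definition of $\chi_i^F$.

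For the existence statements in part (1), I would fix $i$ and study the sequence $b_i(e) := \mu^*(\operatorname{Syz}_i F^e_*R)/\rank(F^e_*R)$. Stratify $\Spec(R)$ by the finitely many locally closed loci on which the local minimal number of generators of $\operatorname{Syz}_i F^e_*R$ is constant, and observe that on each stratum the relevant contribution is controlled by the corresponding local invariant $\beta_i^F(R_P)$, whose existence is guaranteed by \cite{DSHNB}. The task is then to show that the stratification is coherent enough as $e$ varies that a limit of $b_i(e)$ emerges; the same strategy, applied to the alternating sum, produces $\chi_i^F(R)$, which is technically a bit easier to handle because Euler characteristics are additive on short exact sequences.

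For part (2), the starting observation is that for each $e$ there is a prime $P_e \in \Spec(R)$ realizing $\mu^*$ in the defining expression of $\chi_i^F(R)$. The plan is to argue that, after passing to a subsequence, one can choose $P_e$ independently of $e$ — because the closed locus of maximizing primes is contained in a finite union coming from the stratification above — and then pass to the limit, comparing directly with the local definition of $\chi_i^F(R_P)$. I expect this to be the main technical obstacle: minimal numbers of generators are only upper semicontinuous, and keeping track of them simultaneously for all $e$ requires some uniform control, possibly via the Cauchy estimates underlying the proofs in \cite{DSPY,DSHNB}.

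Part (3) then follows quickly. If $\beta_i^F(R) = 0$ for some $i > 0$, then by the analogue of (2) for Frobenius Betti numbers one obtains $\beta_i^F(R_P) = 0$ at every $P \in \Spec(R)$, and the Aberbach--Li theorem \cite{AberbachLi} forces each $R_P$ to be regular, hence $R$ is regular; the converse is immediate since $F^e_*R$ is locally free over a regular ring. For part (4), I would use that a graded minimal free resolution of $F^e_*R$ over a positively graded algebra over $(R_0,\m_0)$ remains minimal after localization at $\m = \m_0 + R_{>0}$, and that the number of generators of the graded syzygies coincides with their local minimal number of generators at $\m$; by upper semicontinuity this local value is simultaneously the maximum over all of $\Spec(R)$, so $\beta_i^F(R) = \beta_i^F(R_\m)$, and the same comparison yields the equality for $\chi_i^F$.
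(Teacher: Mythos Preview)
Your outline has the right instincts but misses the key technical mechanism and contains a few genuine gaps.

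First, you conflate $\mu_R(M)$ with $\mu^*(M) := \max_P \mu_{R_P}(M_P)$. You cannot in general build a resolution whose $i$-th free module has rank $\mu^*(\Omega_i(e))$; the global minimal number of generators can be strictly larger. What is true, via the Forster--Swan theorem, is that $\mu^*(\Omega_i(e)) \leq \mu_R(\Omega_i(e)) \leq \mu^*(\Omega_i(e)) + \dim R$, and this bounded discrepancy disappears after dividing by $p^{e\gamma}$. The paper exploits exactly this.

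Second, and more importantly, your plan for parts (1) and (2) does not identify the actual engine: \emph{uniform convergence} of the functions $P \mapsto \chi_i(e,P,R)/p^{e\gamma}$ on $\Spec(R)$. The paper proves this via Dutta-type filtration estimates (Lemma~\ref{Dutta's Lemma}) and then uses the identity
\[
\mu_{R_P}(\Omega_i(e)_P) + \sum_{j=1}^i (-1)^j b_{i-j}(e) = \chi_i(e,P,R),
\]
valid for any $P$, to connect the global numerator of $\chi_i^F$ to $\max_P \chi_i(e,P,R)$. Uniform convergence then gives $\lim_e \max_P = \max_P \lim_e$ directly, without your proposed subsequence argument ``choosing $P_e$ independently of $e$'', which is both unnecessary and hard to justify since the maximizing prime can genuinely depend on $e$. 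Note also that the paper obtains $\beta_i^F$ only \emph{after} establishing $\chi_i^F$, via $\beta_i^F = \chi_i^F + \chi_{i-1}^F$; your attempt to handle $\beta_i^F$ first via stratification is vaguer and less tractable.

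Third, your argument for part (3) has gaps on both sides. For the forward direction you invoke ``the analogue of (2) for Frobenius Betti numbers''; but the equality $\beta_i^F(R) = \max_P \beta_i^F(R_P)$ is explicitly left as an open question in the paper. What actually suffices is the trivial inequality $\beta_i(e,P,R) \leq \mu_R(\Omega_i(e))$, which immediately gives $\beta_i^F(R_P) \leq \beta_i^F(R)$. For the converse, ``$F^e_*R$ is locally free'' is not enough by itself: locally free does not mean globally free, so the global syzygies need not vanish. The paper instead computes $\chi_i^F(R_P) = (-1)^i$ for all $P$, deduces $\chi_i^F(R) = (-1)^i$ from part (2), and concludes $\beta_i^F(R) = \chi_i^F(R) + \chi_{i-1}^F(R) = 0$.

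Your treatment of part (4) is essentially the paper's argument and is fine.
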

We currently do not know whether a relation, analogous to equality in part (\ref{THMx betas 2}) of Theorem~\ref{THMx betas}, also holds for $\beta_i^F(R)$. Similarly, we do not state any analogue of part (\ref{THMx betas 3}) for $\chi_i^F(R)$, since we are not aware of any interesting connections of these invariants with the regularity of $R$, even in the local case.

The second part of this article is focused on extending the {\it F-splitting ratio} to the global setting. The F-splitting ratio, denoted $r_F(R)$, is a measure of the asymptotic free-rank of the modules $F^e_*R$. It is defined similarly to the F-signature, and in certain cases it coincides with it. However, $r_F(R)$ is always positive for F-pure rings, while the F-signature is non-zero only for strongly F-regular rings. Its existence as a limit was first proved by Tucker for local rings \cite{Tucker2012}, while its positivity for F-pure local rings was established in \cite{BST2012}. We give a definition of this invariant for rings that are not necessarily local, by properly reinterpreting the notion of splitting dimension. We call this extension of the splitting dimension the {\it splitting rate of $R$}, and denote it by $\sr(R)$. Our main results on $\sr(R)$ and $r_F(R)$ can be summarized as follows:
\begin{Theoremx} [see Theorem \ref{Global F-splitting ratio exists} and Proposition \ref{prop graded r_F}] \label{THMx r_F}
Let $R$ be an F-finite domain of prime characteristic $p>0$. Then
\begin{enumerate}
\item The limit $r_F(R)$ exists.
\item We have equalities 
\[
\ds \sr(R) = \min\{\sr(R_P) \mid P \in \Spec(R)\}
\]
and 
\[
\ds r_F(R) = \min\{r_F(R_P) \mid \sr(R) = \sr(R_P)\}.
\]
\item $r_F(R)>0$ if and only if $R$ is F-pure.
\item If $R$ is a positively graded algebra over a local ring $(R_0,\m_0)$, then $\sr(R) = \sr(R_\m)$ and $r_F(R) = r_F(R_\m)$, where $\m = \m_0+R_{>0}$.
\end{enumerate}
\end{Theoremx}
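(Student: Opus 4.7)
The plan follows the strategy developed in \cite{DSPY} for the global F-signature, adapted to the splitting ratio setting. The key is to work with $a_e(R) := \frk_R(F^e_*R)$, the largest integer $n$ for which $R^{\oplus n}$ is a direct summand of $F^e_*R$. This invariant localizes well, with $a_e(R) \leq a_e(R_P)$ for every $P \in \Spec(R)$, and since $R$ is F-finite, $\Spec(R)$ admits a constructible stratification on which the function $P \mapsto a_e(R_P)$ behaves uniformly up to bounded factors. Such a stratification reduces the global statements to finitely many local invariants.

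First I would use this stratification together with Tucker's theorem \cite{Tucker2012} to see that each normalized sequence $a_e(R_P)/p^{e \cdot \sr(R_P)}$ converges to $r_F(R_P)$. Defining $\sr(R)$ via the asymptotic growth of $a_e(R)$, the inequality $\sr(R) \leq \min_P \sr(R_P)$ is immediate from the localization inequality. The reverse inequality is the main obstacle: it requires constructing enough genuinely global free summands of $F^e_*R$ to realize the minimum local growth rate. This is where the stratification is essential: free summands detected on a dense open subset of $\Spec(R)$ can be globalized up to a multiplicative loss that is bounded independently of $e$, and therefore does not affect the asymptotic rate.

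Once $\sr(R) = \min_P \sr(R_P)$ is established, part (1) and the second equality of part (2) follow together: contributions to $a_e(R)/p^{e \cdot \sr(R)}$ from primes with strictly larger local splitting rate are negligible, while primes $P$ with $\sr(R_P) = \sr(R)$ contribute $r_F(R_P)$ in the limit via Tucker's theorem; by the stratification only finitely many such $P$ are relevant, so the minimum is attained. For part (3), F-purity is a local property in the F-finite setting, so $R$ is F-pure iff every $R_P$ is F-pure iff $a_e(R_P) > 0$ for all $P$ and all $e$. Combined with \cite{BST2012}, this gives $r_F(R_P) > 0$ for all $P$, and the min formula transfers positivity to $r_F(R)$. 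Conversely, if some $R_P$ fails to be F-pure, one checks that such a $P$ must lie among the primes achieving the minimum splitting rate, which forces $r_F(R) = 0$.

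For the graded case in part (4), I would adapt the argument used in \cite{DSPY} for the F-signature. The grading on $F^e_*R$ is preserved by any choice of splitting, so every maximal free summand can be taken to be graded, and graded free summands correspond bijectively to local free summands after passing to $R_\m$, where $\m = \m_0 + R_{>0}$. This yields $a_e(R) = a_e(R_\m)$ up to a multiplicative factor independent of $e$, and hence the asserted equalities $\sr(R) = \sr(R_\m)$ and $r_F(R) = r_F(R_\m)$.
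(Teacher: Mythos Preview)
Your outline has the right shape, but the two places where the real work happens are exactly the places you hand-wave.

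\textbf{Globalizing free summands.} You write that ``free summands detected on a dense open subset of $\Spec(R)$ can be globalized up to a multiplicative loss that is bounded independently of $e$.'' This is the crux, and you give no mechanism. The paper uses Stafford's theorem (a consequence of the Forster--Swan circle of ideas): for any finitely generated module $M$ over a Noetherian ring of dimension $d$, one has $\frk_R(M) \geq \min\{\frk_{R_P}(M_P) \mid P \in \Spec(R)\} - d$. This is an \emph{additive} loss of $d$, not a multiplicative one, and it is precisely what converts $\min_P a_e(R_P)$ into a lower bound for $a_e(R)$. Without naming this tool (or an equivalent), your reverse inequality $\sr(R) \geq \min_P \sr(R_P)$ is an assertion, not an argument.

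\textbf{Uniform convergence.} You invoke Tucker's theorem for the local limits $r_F(R_P)$, but pointwise convergence is not enough. To conclude that $\lim_e a_e(R_{P_e})/p^{e\sr(R)}$ (where $P_e$ realizes the minimum of $a_e$) equals $\min\{r_F(R_P) \mid \sr(R_P)=\sr(R)\}$, you need the convergence $a_e(R_P)/p^{e\sr(R_P)} \to r_F(R_P)$ to be \emph{uniform} in $P$. The paper obtains this from two ingredients you do not mention: the finiteness of centers of F-purity (Schwede, Kumar--Mehta), which gives a genuinely finite stratification $\Spec(R) = \bigcup_\ell W_\ell$ by splitting rate; and the identification $a_e(R_P) = a_e(R_P/\mathcal{P}R_P, \D_{R_P/\mathcal{P}R_P})$ from \cite{BST2012}, which reduces to Polstra's uniform bounds for Cartier algebras. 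Your ``constructible stratification on which $a_e(R_P)$ behaves uniformly up to bounded factors'' gestures at this, but the content lies in \emph{why} the bound is uniform, and that is exactly what the centers-of-F-purity machinery provides.

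\textbf{Graded case.} Your claim that $a_e(R) = a_e(R_\m)$ ``up to a multiplicative factor independent of $e$'' is weaker than what the paper proves: one has exact equality $a_e(R) = a_e(R_\m)$. The argument is that if a $\mathbb{Q}$-graded module has no graded free summand, then its localization at $\m$ has no free summand either (any surjection $(M_e)_\m \to R_\m$ lifts to a map $M_e \to R$ whose image is not in $\m$, and a homogeneous component of that map already exhibits a graded free summand). Your sentence ``every maximal free summand can be taken to be graded'' is the right idea but needs this justification.
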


We remark that Theorem~\ref{THMx betas} and Theorem~\ref{THMx r_F} are here only stated for global Frobenius Betti numbers and F-splitting ratio of the ring $R$. In Section \ref{Section betas} and \ref{Section global r_F}, we actually obtain more general results for global Frobenius Betti numbers and F-splitting ratio of finitely generated $R$-modules.

In the final section of this article, we positively answer \cite[Question 4.24]{DSPY}. In the local case, it was proved in \cite{BST2012} that the F-signature of a Cartier algebra $\D$ on $R$ is positive if and only if the pair $(R,\D)$ is strongly F-regular. These authors were able to recover the same result in the global setting, provided the Cartier algebra $\D$ satisfies certain additional assumptions \cite[Theorem 2.24]{DSPY}. We are able to remove these extra conditions:
\begin{Theoremx}
Let $R$ be an F-finite domain, and $\D$ be a Cartier algebra on $R$. Then $(R,\D)$ is strongly F-regular if and only if $\s(R,\D)>0$.
\end{Theoremx}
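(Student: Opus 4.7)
The plan is to prove the two implications separately, combining the local result of \cite{BST2012} with the localization theory developed in this paper.

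For the direction $\s(R,\D) > 0 \Rightarrow (R,\D)$ strongly F-regular, I would first establish an inequality of the form $\s(R_P,\D_P) \geq \s(R,\D)$ for every $P \in \Spec(R)$, in analogy with the comparisons between global and local invariants established for $\beta^F_i$ and $r_F$ in Theorems~\ref{THMx betas} and~\ref{THMx r_F}. This would be done by showing that a minimal free quotient witnessing a $\D$-splitting of $F^e_*R$ globally localizes to a $\D_P$-splitting at each $P$. The assumption $\s(R,\D)>0$ then forces $\s(R_P,\D_P)>0$ at every prime $P$, and the local result of \cite{BST2012} yields strong F-regularity of $(R_P,\D_P)$ at every $P$. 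Since this is equivalent to strong F-regularity of $(R,\D)$, the conclusion follows.

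For the converse, assume $(R,\D)$ is strongly F-regular; this is the direction in which \cite[Theorem~2.24]{DSPY} required extra hypotheses on $\D$. By strong F-regularity, each local pair $(R_P,\D_P)$ is strongly F-regular and hence has $\s(R_P,\D_P)>0$ by \cite{BST2012}. The key step is to promote this pointwise positivity to a positive lower bound for $\s(R,\D)$. For this I propose a Noetherian compactness argument: the subsets of $\Spec(R)$ consisting of primes where no element of a given ideal admits a $\D$-splitting at level at most $e$ form a descending family of closed sets whose intersection is empty by strong F-regularity; hence at some finite level $e_0$ a single choice of $c\in R$ works uniformly over $\Spec(R)$. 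Using such a uniform witness $(c,e_0)$, I would then emulate the arguments of Section~\ref{Section global r_F}: iterating the uniform witness produces enough $\D$-splittings of $F^e_*R$ at each level $e \geq e_0$ to guarantee that the global $\D$-free-rank of $F^e_*R$ grows at the expected rate of $p^{e\sdim(R)}$, giving $\s(R,\D)>0$ in the limit.

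The main obstacle is the uniformization step. In \cite[Theorem~2.24]{DSPY}, the extra hypotheses on $\D$ essentially provided a canonical global generator for the Cartier algebra, bypassing this difficulty. Without those hypotheses, one must leverage F-finiteness of $R$ together with a careful prime-avoidance argument to select $(c,e_0)$ simultaneously for all primes. Once this uniform witness is available, the remaining quantitative comparison between global and local $\D$-free-ranks is parallel to the arguments used to prove Theorem~\ref{THMx r_F} in Section~\ref{Section global r_F}, and should not require essentially new ideas.
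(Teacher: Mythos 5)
Your easy direction is fine and matches the paper's (contrapositive) argument. The hard direction, however, has a genuine gap, and in fact the route you propose is precisely what the paper is trying to \emph{avoid}.

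You propose to localize, obtain $\s(R_P,\D_P)>0$ at each $P$ from \cite{BST2012}, and then promote this pointwise positivity to a global lower bound via a compactness/uniformization step, ``emulating the arguments of Section~\ref{Section global r_F}.'' But the arguments of Section~\ref{Section global r_F} (and of \cite[Theorem~2.24]{DSPY}) rest on semicontinuity and uniform-convergence results that are only known to hold under condition $(\dagger)$ or $(*)$ --- for a general Cartier algebra $\D$ it is not known that the relevant sets are open, nor that the limit functions $P \mapsto \s(R_P,\D_P)$ are lower semi-continuous, nor even that $\s(R,\D)=\min_P \s(R_P,\D_P)$. Indeed, the paper's closing Remark explicitly says that Theorem~\ref{A} does \emph{not} show the existence of a prime attaining this minimum. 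So the ``pointwise positivity plus Noetherian compactness'' plan would reintroduce exactly the hypotheses on $\D$ that the theorem is supposed to drop. Your compactness argument also does not quite parse as stated: descending closed sets with empty intersection give, via Noetherian compactness, a finite level $e_0$ for a \emph{fixed} $c$, but there is no reason a single $c$ should serve simultaneously for all primes, and the openness of the complements is not known without $(*)$.

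The paper's actual proof sidesteps all of this and works entirely globally. Since $(R,\D)$ strongly F-regular forces $R$ itself to be strongly F-regular, Lemma~\ref{Easy Lemma} gives $a_e(R) \geq \varepsilon \rank(F^e_*R)$ for all $e$, i.e.\ many global \emph{ordinary} free summands of $F^e_*R$. The key technical input your proposal omits is the Premultiplication Lemma (\cite[Lemma~3.13c]{BST2012}, Lemma~\ref{Premultiplication Lemma} here): given a nonzero $\varphi \in \D_{e_1}$, there is a single $0 \neq z$ so that $z\psi \in \D_{ne_1}$ for every $\psi \in \Hom_R(F^{ne_1}_*R,R)$. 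This converts each projection $\tilde\lambda_i$ onto a free $R$-summand into a $\D$-linear map $z\tilde\lambda_i$. Since strong F-regularity of $(R,\D)$ directly (no compactness needed) provides $\gamma \in \D_{e_2}$ with $\gamma(F^{e_2}_*z)=1$, the compositions $\gamma \circ F^{e_2}_*(z\tilde\lambda_i)$ give $\ell_e$-many $\D$-splittings, and the quantitative bound $\s(R,\D) \geq 1/(p^{e_0}\rank(F^{e_2}_*R))>0$ follows. Without the Premultiplication Lemma, ``iterating the uniform witness'' at best yields $a_{ne_0}(R,\D)\geq 1$, which does not grow at the rate $p^{e\gamma(R)}$ needed for positivity of the signature.
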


\section{Background on Frobenius Betti numbers of local rings}\label{Background}



 

Let $(R,\m,k)$ be an F-finite local ring of dimension $d$. For an $R$-module $M$, we denote by $F^e_*M$ the module structure on $M$ induced by restriction of scalars via $F^e:R \to R$, the $e$-th iterate of the Frobenius endomorphism on $R$. For any $R$-module $L$ of finite length, and any finitely generated $R$-module $M$, the $R$-module $\Tor_i^R(L,F^e_*M)$ has finite length, for all $i \geq 0$ and all $e \in \Z_{>0}$. For a prime ideal $P$ of $R$, let $\kappa(P)$ denote the residue field $R_P/PR_P \cong (R/P)_P$ of the local ring $R_P$. We set $\gamma(R)=\max\{\alpha(P)\mid P\in \Spec(R)\}$, where $\alpha(P) = \log_p[F_*\kappa(P):\kappa(P)]$. Moreover, given a finitely generated $R$-module $M$, we set $\gamma(M) = \gamma(R/\ann(M))$. In \cite{Seibert}, Seibert proves that the limit
\[
\ds \beta_i^F(L,M,\gamma) = \lim_{e \to \infty} \frac{\lambda_R\left(\Tor_i^R(L,F^e_*M)\right)}{p^{e\gamma}}
\]
exists for every integer $\gamma \geq \gamma(M)$. 
When $L=R/\m$, we simply denote $\beta_i^F(R/\m,M,\gamma)$ by $\beta_i^F(M,\gamma)$. In addition, if $\gamma=\gamma(M)$, we only write $\beta_i^F(M)$, omitting $\gamma$ from the notation, and we call this invariant the {\it $i$-th Frobenius Betti number of $M$}. We warn the reader that, in \cite{DSHNB}, the normalization factor in the denominator of the $i$-th Frobenius Betti number is chosen to be $p^{e\gamma(R)}$, rather than $p^{e\gamma(M)}$. 

Observe that, for all $e$, the length $\lambda_R(\Tor_i^R(R/\m,F^e_*M))$ in the numerator of $\beta_i^F(M)$ is the $i$-th Betti number of the $R$-module $F^e_*M$. Moreover
\[
\ds \beta_0^F(M,\gamma(R)) = \lim\limits_{e \to \infty} \frac{\lambda_R(R/\m \otimes_R F^e_*M)}{p^{e\gamma(R)}} = \eHK(M)
\]
is the Hilbert-Kunz multiplicity of $M$, with respect to the maximal ideal $\m$. 

To ease up the notation, we often write $\beta_i(e,\m,M)$ for $\lambda_R(\Tor_i^R(R/\m,F^e_*M))$. More generally, for $P \in \Spec(R)$ and integers $e, i \geq 0$, we define 
\[
\ds \beta_i(e,P,M)= \lambda_{R_P}(\Tor_i^{R_P}(\kappa(P),F^e_*(M_P))).
\] 
With this notation, the $i$-th Frobenius Betti number of $M_P$ as an $R_P$-module is 
\[
\ds \beta_i^F(M_P) = \lim_{e \to \infty} \frac{\beta_i(e,P,M)}{p^{e\gamma(M_P)}}. 
\]
\begin{Remark} We warn the reader about a potential source of confusion with our notation. 
If we view $M_P$ as an $R$-module, $\beta_i^F(M_P)$ is equal to $\lim\limits_{e \to \infty} \beta_i(e,\m,M_P)/p^{e\gamma(M_P)}$. On the other hand, viewing $M_P$ as an $R_P$-module, $\beta_i^F(M_P)$ is equal to $\lim\limits_{e \to \infty} \beta_i(e,P,M_P)/p^{e\gamma(M_P)}$. We could fix the problem by specifying the underlying ring; however, when writing $\beta_i^F(M_P)$ for a finitely generated $R$-module $M$, we will always view $M_P$ as an $R_P$-module. Therefore, we do not specify the underlying ring here, to avoid making the notation heavier. 
\end{Remark}

\section{Uniform convergence and upper semi-continuity results}  \label{Section Uniform convergence Frob Betti}

A key ingredient that is used in \cite{DSPY} for developing a global theory of Hilbert-Kunz multiplicity and F-signature are certain semicontinuity results. In particular, to relate the global Hilbert-Kunz multiplicity to the invariants in the localization, the upper semi-continuity of the functions 
\[
\xymatrixcolsep{2mm}
\xymatrixrowsep{2mm}
\xymatrix{
\lambda_e:\Spec(R)\ar[r] &  \R \\ 
P\ar[r] & \lambda(M_P/P^{[p^e]}M_P)/p^{e\height(P)}
}
\]
for locally equidimensional excellent rings, and the uniform convergence to their limit, play a crucual role. The upper semi-continuity was first established in \cite{Kunz1976} (Kunz claimed that this result was true for an equidimensional ring, but Shepherd-Barron noted in \cite{ShepherdBarron} that the locally equidimensional assumption is needed). The uniform convergence was established in \cite[Theorem 5.1]{Polstra2018}. An immediate consequence of these two facts is that the Hilbert-Kunz function is upper semi-continuous on the spectrum of a locally equidimensional ring \cite{Smirnov2015,Polstra2018}.

As for the Hilbert-Kunz multiplicity, $\beta_i^F(-)$ is additive on short exact sequences, therefore for several arguments we can reduce to the case $M=R/\q$, where $\q$ is a prime. We warn the reader that this does not allow us to reduce to the case when $R$ is a domain, as for the Hilbert-Kunz multiplicity. In fact, we still have to compute the lengths modules $\Tor_i^{R_P}(\kappa(P),F^e_*(R/\q)_P)$ over the rings $R_P$. This complicates the arguments for the Frobenius Betti numbers, and adds some technical work to the proofs of uniform convergence and upper semi-continuity in this section.



Let $X$ be a topological space. Recall that a function $f:X \to \R$ is {\it upper semi-continuous} if for all $x \in X$, for all $\varepsilon > 0$, there exists an open set $U \subseteq X$ containing $x$ such that $f(y)<f(x)+\varepsilon$ for all $y \in U$. We say that $f$ is {\it dense upper semi-continuous} if, for all $x \in X$, there exists an open set $U \subseteq X$ containing $x$ such that $f(y) \leq f(x)$ for all $y \in U$.

In what follows, it will be helpful to consider an Euler characteristic version of the Frobenius Betti numbers, in part inspired by Dutta multiplicities \cite{Dutta1983}. For integers $i, e \geq 0$, a finitely generated $R$-module $M$, and a prime $P \in \Spec(R)$, recall that we have defined $\beta_i(e,P,M) = \lambda_{R_P}(\Tor_i^{R_P}(\kappa(P),F^e_*(M_P)))$. In the same setup, we let
\[
\ds \chi_i(e,P,M) = \sum_{j=0}^i (-1)^{i-j} \beta_j(e,P,M).
\] 

\begin{Proposition} \label{dense upper semicont} 
Let $R$ be an F-finite ring, and $M$ be a finitely generated $R$-module. For $i \in \Z_{\geq 0}$, and $e \in \Z_{>0}$, the functions
\[
\ds P \in \Spec(R) \mapsto \beta_i(e,P,M) \hspace{0.5cm} \mbox{ and } \hspace{0.5cm} P \in \Spec(R) \mapsto \chi_i(e,P,M)
\]
are dense upper semi-continuous. In particular, they are upper semi-continuous.
\end{Proposition}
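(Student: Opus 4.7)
My plan is to prove both dense upper semi-continuity statements by \emph{spreading out} the minimal free resolution at a prime $P_0$ to a partial free resolution in a Zariski-open neighborhood of $P_0$, and then comparing it, at each nearby prime $Q$, to the minimal free resolution over $R_Q$ via the classical decomposition of a free resolution over a local ring as the minimal resolution plus a direct sum of trivial complexes $0\to R\xrightarrow{\mathrm{id}}R\to 0$.

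Fix $P_0\in\Spec(R)$. Choose a minimal free resolution $\mathbf{F}^{P_0}_\bullet\to F^e_*M_{P_0}$ over $R_{P_0}$, so $F^{P_0}_j = R_{P_0}^{\beta_j(e,P_0,M)}$. I only work with the truncation in degrees $\leq i+1$: the differentials $\phi_1,\dots,\phi_{i+1}$ and the augmentation $\epsilon\colon R_{P_0}^{\beta_0(e,P_0,M)}\to F^e_*M_{P_0}$ involve only finitely many elements of $R_{P_0}$, so they all lift to maps over $R_s$ for some $s\in R\setminus P_0$. By further shrinking $s$ (multiplying by elements outside $P_0$), I arrange that the lifted compositions $\widetilde{\phi}_j\circ\widetilde{\phi}_{j+1}$ vanish, and that $\coker(\widetilde{\epsilon})$ and the homology modules $H_j(\widetilde{\mathbf{F}}_\bullet)$ for $1\leq j\leq i$ all vanish over $R_s$. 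This is possible because each of these is a finitely generated $R_s$-module whose localization at $P_0$ is zero, and hence is killed by some element outside $P_0$. The outcome is a partial free resolution
\[
\widetilde{\mathbf{F}}_\bullet\colon R_s^{\beta_{i+1}(e,P_0,M)}\to R_s^{\beta_i(e,P_0,M)}\to\cdots\to R_s^{\beta_0(e,P_0,M)}\to F^e_*M_s\to 0
\]
that is exact in degrees $0,1,\dots,i$.

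For any $Q\in D(s)\subseteq\Spec(R)$, localizing at $Q$ and extending (if necessary) to a full free resolution of $F^e_*M_Q$ over the local ring $R_Q$ yields a resolution that decomposes as the minimal free resolution plus a direct sum of trivial complexes. Writing $t_j\geq 0$ for the number of trivial summands placed in degrees $[j,j-1]$, with the convention $t_0=0$, equating ranks degree by degree gives
\[
\beta_j(e,P_0,M)\;=\;\beta_j(e,Q,M)+t_j+t_{j+1}\qquad\text{for }0\leq j\leq i.
\]
The inequality $\beta_i(e,Q,M)\leq\beta_i(e,P_0,M)$ is immediate, proving dense upper semi-continuity of $\beta_i(e,\cdot,M)$ at $P_0$. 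A telescoping computation yields
\[
\chi_i(e,P_0,M)-\chi_i(e,Q,M)\;=\;\sum_{j=0}^i(-1)^{i-j}(t_j+t_{j+1})\;=\;t_{i+1}\;\geq\;0,
\]
so $\chi_i(e,Q,M)\leq\chi_i(e,P_0,M)$, establishing the same for $\chi_i(e,\cdot,M)$.

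The main technical obstacle is making the spreading-out step rigorous, which relies on the fact that all failures of the complex and of the exactness conditions are measured by finitely generated modules, so that their vanishing at the stalk $P_0$ propagates to a Zariski-open neighborhood. A secondary, perhaps surprising, point is that $\chi_i(e,\cdot,M)$ comes out (dense) upper semi-continuous even though an alternating sum of upper semi-continuous functions is not so in general: the telescope above isolates the single nonnegative term $t_{i+1}$, which is precisely what saves the argument.
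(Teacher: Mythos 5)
Your proof is correct, and your first step --- spreading out the minimal free resolution at $P_0$ to a partial free resolution over $R_s$ for some $s\notin P_0$ --- is exactly what the paper does. Where you diverge is in how you extract the inequality for $\chi_i$: you extend the localized complex at $Q$ to a full free resolution and invoke the structure theorem that any free resolution over a Noetherian local ring splits as (minimal resolution) $\oplus$ (trivial complexes), then telescope the alternating sum down to the single nonnegative integer $t_{i+1}$. The paper instead works with the syzygies $\Omega_j=\ker((\psi_{j-1})_Q)$ directly, tensors the short exact sequences $0\to\Omega_j\to R_Q^{\beta_{j-1}(e,P_0,M)}\to\Omega_{j-1}\to 0$ with $\kappa(Q)$, and counts lengths to get the identity $\mu_{R_Q}(\Omega_i)+\sum_{j=0}^{i-1}(-1)^{i-j}\beta_j(e,P_0,M)=\chi_i(e,Q,M)$; the inequality then follows because $\Omega_i$ is visibly generated by $\beta_i(e,P_0,M)$ elements, so $\mu_{R_Q}(\Omega_i)\leq\beta_i(e,P_0,M)$. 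The two bookkeeping devices are equivalent --- your $t_{i+1}$ is precisely $\beta_i(e,P_0,M)-\mu_{R_Q}(\Omega_i)$ --- so what you gain is a slightly more structural way of seeing why the alternating sum, which a priori need not be upper semi-continuous, collapses to a single nonnegative correction term; the paper's version avoids invoking the full decomposition theorem and works purely with the syzygy filtration, which is a touch more elementary. One small point worth making explicit in your write-up: the count $t_{i+1}$ depends only on $b_0,\dots,b_i$ and $\beta_0(e,Q,M),\dots,\beta_i(e,Q,M)$ via the recursion $t_{j+1}=b_j-\beta_j(e,Q,M)-t_j$ (with $t_0=0$), so it is unaffected by whatever choices you make in extending the spread-out complex to a full resolution beyond degree $i+1$.
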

\begin{proof}

Let $P \in \Spec(R)$, and let $e > 0$. Consider a minimal free resolution of $F^e_*(M_P)$:
\[
\xymatrix{
\ldots \ar[r] &  R_P^{\beta_i(e)} \ar[r]^-{\varphi_i} & R_P^{\beta_{i-1}(e)} \ar[r]^-{\varphi_{i-1}} & \ldots\ldots \ar[r] & R_P^{\beta_0(e)} \ar[r]^-{\varphi_0} & F^e_*(M_P) \ar[r] & 0,
}
\]
where $\beta_j(e) = \beta_j(e,P,M)$ is the $j$-th Betti number of $F^e_*(M_P)$ as an $R_P$-module. Since the rank of each free $R_P$-module in the resolution is finite, for all $j$ we can find lifts $\psi_j \in \Hom_R(R^{\beta_j(e)},R^{\beta_{j-1}(e)})$ of $\varphi_j$ from $R_P$ to $R$, giving maps
\[
\xymatrix{
\ldots \ar[r] &  R^{\beta_i(e)} \ar[r]^-{\psi_i} & R^{\beta_{i-1}(e)} \ar[r]^-{\psi_{i-1}} & \ldots\ldots \ar[r] & R^{\beta_0(e)} \ar[r]^-{\psi_0} & F^e_*M \ar[r] & 0.
}
\]
Note that this is not even necessarily a complex. However, since $R$ is Noetherian and all the free modules in question have finite rank, by inverting an element $s \in R \smallsetminus P$ we can assume that $\ker((\psi_j)_s) = \im((\psi_{j+1})_s)$ for all $j=0,\ldots,i$, and that $\im((\psi_0)_s) = F^e_*(M_s)$. In other words,
\[
\xymatrix{
R_s^{\beta_{i+1}(e)} \ar[r]^-{(\psi_{i+1})_s} &  R_s^{\beta_i(e)} \ar[r]^-{(\psi_i)_s} & R_s^{\beta_{i-1}(e)} \ar[r]^-{(\psi_{i-1})_s} & \ldots\ldots \ar[r] & R_s^{\beta_0(e)} \ar[r]^-{(\psi_0)_s} & F^e_*(M_s) \ar[r] & 0.
}
\]
is part of a free resolution of $F^e_*(M_s)$ over the ring $R_s$. In particular, by localizing at any prime $Q \in \Spec(R)$ not containing $s$, the complex is still exact, and it becomes a free resolution of $F^e_*(M_Q)$ over $R_Q$. However, it may not be minimal. That is, $\beta_i(e,Q,M) \leq \beta_i$ for all $i \geq 0$. If we consider the Zariski open set $D(s)= \{Q \in \Spec(R) \mid s \notin Q\}$, we therefore have that $\beta_i(e,Q,M) \leq \beta_i(e,P,M)$ for all $Q \in D(s)$. This shows dense upper semi-continuity of the function $P \mapsto \beta_i(e,P,M)$. We now focus on the function $P \mapsto \chi_i(e,P,M)$. Let $P,\varphi_j, \psi_j$ and $s \in R \smallsetminus P$ be as above. Let $Q \in D(s)$, and denote $\Omega_j=\ker((\psi_{j-1})_Q)$, for all $j=0,\ldots,i$. This gives short exact sequences of $R_Q$-modules:
\[
\ds 0 \longrightarrow \Omega_j \longrightarrow R_Q^{\beta_{j-1}(e)} \longrightarrow \Omega_{j-1} \longrightarrow 0
\]
for all $j=1,\ldots,i$. Tensoring with $\kappa(Q)$, we obtain long exact sequences
\[
\ds 0 \longrightarrow \Tor_1^{R_Q}(\kappa(Q),\Omega_{j-1}) \longrightarrow \Omega_j/Q\Omega_j \longrightarrow \kappa(Q)^{\beta_{j-1}(e)} \longrightarrow \Omega_{j-1}/Q\Omega_{j-1} \longrightarrow 0.
\]
Finally, since $\Tor_1^{R_Q}(\kappa(Q),\Omega_{j-1}) \cong \Tor_j^{R_Q}(\kappa(Q),F^e_*(M_Q))$, by counting lengths over $R_Q$ we obtain that
\[
\ds \mu_{R_Q}(\Omega_i) + \sum_{j=0}^{i-1} (-1)^{i-j} \beta_j(e) = \sum_{j=0}^i (-1)^{i-j} \lambda_{R_Q}(\Tor_j^{R_Q}(\kappa(Q),F^e_*(M_Q))) = \chi_i(e,Q,M),
\]
where $\mu_{R_Q}(-)$ denotes the minimal number of generators of an $R_Q$-module. Since $\mu_{R_Q}(\Omega_i) \leq \beta_i(e)$, we get the desired conclusion
\[
\ds \chi_i(e,Q,M) \leq \beta_i(e) + \sum_{j=0}^{i-1} (-1)^{i-j} \beta_j(e) = \chi_i(e,P,M). \qedhere
\]
\end{proof}
We recall the following global version of an observation made by Dutta in \cite{Dutta1983}.

\begin{Lemma}[{\cite[Lemma 2.2]{Polstra2018}}]\label{Dutta's Lemma} Let $R$ be an F-finite domain. There exists a finite set of nonzero primes $\mathcal{S}(R)$, and a constant $C$, such that for every $e\in \Z_{>0}$
\begin{enumerate}
\item there is a containment of $R$-modules $R^{\oplus p^{e\gamma(R)}}\subseteq F^e_*R$,
\item which has a prime filtration with prime factors isomorphic to $R/Q$, where $Q\in\mathcal{S}(R)$,
\item and for each $Q\in \mathcal{S}(R)$, the prime factor $R/Q$ appears no more than $Cp^{e\gamma(R)}$ times in the chosen prime filtration of $R^{\oplus p^{e\gamma(R)}}\subseteq F^e_*R$.
\end{enumerate}
\end{Lemma}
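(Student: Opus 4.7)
The plan is to construct the embedding $R^{\oplus p^{e\gamma(R)}} \hookrightarrow F^e_*R$ and its prime filtration inductively on $e$, bootstrapping from a carefully chosen $e=1$ case applied to $R$ and to each quotient domain $R/Q$ that is needed. Two standard facts about the function $\alpha$ for an F-finite domain will be used: $\gamma(R) = \alpha((0))$ (so the generic rank of $F^e_*R$ equals $p^{e\gamma(R)}$), and by Kunz's formula $\alpha(Q)+\height(Q) = \gamma(R)$, whence $\alpha(Q) \leq \gamma(R)-1$ for every nonzero prime $Q$.

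First I would settle the case $e=1$ once and for all, for $R$ and for each quotient domain that will be needed. Fix an embedding $\phi_1: R^{\oplus p^{\gamma(R)}} \hookrightarrow F_*R$ with torsion cokernel $C_1$ (possible by the generic rank computation) and fix a prime filtration of $C_1$ of finite length, using only finitely many nonzero primes of $R$. Then run the analogous base case in each quotient $R/Q$ that arises, which introduces finitely many further primes of $R$ strictly containing $Q$, and iterate. Since prime chains in $R$ have length at most $\dim R$, the process terminates, and the union $\mathcal{S}(R)$ of all primes that appear is a finite set of nonzero primes.

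For $e \geq 2$, construct $\phi_e$ recursively: apply the exact functor $F_*$ to $\phi_{e-1}$, obtaining $(F_*R)^{\oplus p^{(e-1)\gamma(R)}} \hookrightarrow F^e_*R$ with cokernel $F_*C_{e-1}$; then precompose with $\phi_1^{\oplus p^{(e-1)\gamma(R)}}$. The cokernel $C_e$ of the composite embedding fits in
\[
0 \longrightarrow C_1^{\oplus p^{(e-1)\gamma(R)}} \longrightarrow C_e \longrightarrow F_*C_{e-1} \longrightarrow 0,
\]
and inherits a prime filtration with factors from $\mathcal{S}(R)$ by splicing $p^{(e-1)\gamma(R)}$ copies of the fixed filtration of $C_1$ with a filtration of $F_*C_{e-1}$ built from the filtration of $C_{e-1}$ (by applying $F_*$ termwise and refining each factor $F_*(R/Q)$ using the fixed base case for $R/Q$).

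For the multiplicity bound, denote by $n_Q(e)$ the number of $R/Q$ factors in the filtration of $C_e$. The construction yields the recursion
\[
n_Q(e) \leq p^{(e-1)\gamma(R)}\, n_Q(1) + p^{\alpha(Q)}\, n_Q(e-1) + \sum_{\substack{P \in \mathcal{S}(R)\\ P \subsetneq Q}} m_{P,Q}\, n_P(e-1),
\]
where $m_{P,Q}$ is the fixed multiplicity of $R/Q$ in the chosen filtration of $F_*(R/P)$. Induct on $\height(Q)$: for $Q$ minimal in $\mathcal{S}(R)$, the sum is empty, and unrolling the recursion produces a geometric series $\sum_{j\geq 0} p^{j(\alpha(Q)-\gamma(R))}$, which converges to a constant since $\alpha(Q) < \gamma(R)$, giving $n_Q(e) \leq C_Q\, p^{e\gamma(R)}$. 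For $Q$ of higher height, the lower-height bounds feed the sum with contributions of order $O(p^{(e-1)\gamma(R)})$ and the same recursion analysis applies. Taking $C = \max_{Q\in\mathcal{S}(R)} C_Q$ yields the uniform bound.

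The main obstacle is the recursion analysis in the final step: it closes only because $\alpha(Q) < \gamma(R)$ strictly for every nonzero prime $Q$, which guarantees that the geometric series appearing in the solution converges, and hence that the multiplicity $n_Q(e)$ grows no faster than $p^{e\gamma(R)}$. The termination of the prime-growing process for $\mathcal{S}(R)$ similarly depends on $\dim R < \infty$, without which the set of primes would grow without bound.
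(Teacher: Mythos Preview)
The paper does not prove this lemma; it is stated with a citation to \cite[Lemma~2.2]{Polstra2018} and used as a black box. So there is no in-paper proof to compare against.

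Your proposal is a correct and essentially standard reconstruction of the argument. The key ingredients --- the generic rank of $F^e_*R$ being $p^{e\gamma(R)}$, the inductive bootstrap using $F_*$ applied to the $(e-1)$-st embedding composed with a direct sum of copies of the base embedding, and the closure of $\mathcal{S}(R)$ under the primes appearing in the cokernel filtrations of each $F_*(R/Q)$ --- are all set up properly. The recursion you write for $n_Q(e)$ is exactly the right bookkeeping, and the induction on $\height(Q)$ together with the strict inequality $\alpha(Q) < \gamma(R)$ for nonzero $Q$ is precisely what makes the geometric series converge. Your remark that the finiteness of $\mathcal{S}(R)$ needs $\dim R < \infty$ is accurate, and this is automatic for F-finite domains via the same Kunz formula you invoke (since $\height(Q) \leq \gamma(R)$ for every prime). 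This is the approach behind the cited result; there is nothing to correct.
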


Lemma~\ref{Dutta's Lemma} is used by the second author in \cite{Polstra2018} to establish the presence of strong uniform bounds for all F-finite rings, and in \cite{DSPY} to show the existence of global Hilbert-Kunz multiplicity and F-signature. 
\begin{Lemma} \label{uniform convergence primes} Let $R$ be an F-finite ring, $\q \in \Spec(R)$, $i$ and $\gamma$ be non-negative integers, with $\gamma \geq \gamma(R/\q)$. There exists a constant $A$, that only depends on $i$ and $\q$, such that
\[
\ds \left| \frac{\beta_i(e_1+e_2,P,R/\q)}{(q_1q_2)^{\gamma}} - \frac{\beta_i(e_2,P,R/\q)}{q_2^{\gamma}}\right| \leq \frac{A}{q_2}
\]
for all $q_1=p^{e_1}$, $q_2=p^{e_2}$ and $P \in \Spec(R)$. In particular, the sequence $\ds \left\{\frac{\beta_i(e,\bullet, R/\q)}{p^{e\gamma}}\right\}_{e \in \Z_{>0}}$ converges uniformly on $\Spec(R)$.
\end{Lemma}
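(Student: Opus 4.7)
The plan is to apply Lemma~\ref{Dutta's Lemma} to the F-finite domain $R/\q$, and then pass through a Tor long exact sequence to compare $\beta_i(e_1+e_2,P,R/\q)$ with $q_1^{\gamma(R/\q)}\beta_i(e_2,P,R/\q)$. The error will be controlled by Tor's of ``smaller'' quotients $R/Q$ for which the uniform bounds of \cite{Polstra2018} apply, and after normalizing by $(q_1 q_2)^\gamma$ the error will be of order $1/q_2$.

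In detail, Lemma~\ref{Dutta's Lemma} applied to $R/\q$ provides a short exact sequence
\[
0 \longrightarrow (R/\q)^{\oplus q_1^{\gamma(R/\q)}} \longrightarrow F^{e_1}_*(R/\q) \longrightarrow C_{e_1} \longrightarrow 0,
\]
where $C_{e_1}$ admits a filtration whose factors $R/Q$, with $Q$ ranging over a finite set $\mathcal{S}(R/\q)$ of primes strictly containing $\q$, each occur at most $Cq_1^{\gamma(R/\q)}$ times. Since each such $Q$ strictly contains $\q$, the formula $\alpha(\q)=\dim(R_Q/\q R_Q)+\alpha(Q)$ available in the F-finite setting forces $\gamma(R/Q)\leq \gamma(R/\q)-1\leq \gamma-1$. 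Applying the exact functor $F^{e_2}_*$, localizing at $P$, and taking the long exact sequence of $\Tor^{R_P}_\bullet(\kappa(P),-)$, a standard Euler-characteristic argument yields
\[
\bigl|\beta_i(e_1+e_2,P,R/\q) - q_1^{\gamma(R/\q)}\beta_i(e_2,P,R/\q)\bigr| \leq \sum_{j\in\{i,i+1\}}\lambda_{R_P}\bigl(\Tor_j^{R_P}(\kappa(P),F^{e_2}_*C_{e_1,P})\bigr).
\]
Additivity of length along the filtration of $C_{e_1}$, combined with uniform Tor-bounds $\beta_j(e_2,P,R/Q)\leq B_{j,Q}\,q_2^{\gamma(R/Q)}$ (uniform in $P$) extracted from \cite{Polstra2018}, bounds the right-hand side by $A'\,q_1^{\gamma(R/\q)} q_2^{\gamma-1}$ for a constant $A'$ depending only on $i$ and $\q$.

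Dividing by $(q_1 q_2)^\gamma$ gives
\[
\left|\frac{\beta_i(e_1+e_2,P,R/\q)}{(q_1 q_2)^\gamma} - q_1^{\gamma(R/\q)-\gamma}\frac{\beta_i(e_2,P,R/\q)}{q_2^\gamma}\right| \leq \frac{A'}{q_2}.
\]
When $\gamma=\gamma(R/\q)$ the factor $q_1^{\gamma(R/\q)-\gamma}$ equals $1$ and the bound is exactly as required. When $\gamma>\gamma(R/\q)$, the uniform bound gives $\beta_i(e_2,P,R/\q)/q_2^\gamma \leq B\,q_2^{\gamma(R/\q)-\gamma}\leq B/q_2$, so the additional discrepancy $(1-q_1^{\gamma(R/\q)-\gamma})\beta_i(e_2,P,R/\q)/q_2^\gamma$ is also at most $B/q_2$ and can be absorbed into $A$. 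Uniform convergence of $\{\beta_i(e,\bullet,R/\q)/p^{e\gamma}\}_e$ on $\Spec(R)$ then follows immediately by specializing to $e_1=1$ and iterating, as this is a Cauchy-type estimate uniform in $P$.

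The main obstacle is securing the uniform Tor-bound $\beta_j(e,P,R/Q)\leq B\,q^{\gamma(R/Q)}$ with $B$ independent of $P$ for $j=i$ and $j=i+1$. For $j=0$ this is essentially the uniform Hilbert--Kunz bound of \cite{Polstra2018}, but the higher-index case requires either an induction on $j$ via another application of Dutta's filtration to syzygies of $R/Q$, or a direct appeal to the strong uniform bounds for Tor developed in \cite{Polstra2018}; this is where the bulk of the technical work sits.
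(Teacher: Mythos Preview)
Your overall architecture matches the paper's: apply Lemma~\ref{Dutta's Lemma} to $R/\q$, push the resulting short exact sequence through $F^{e_2}_*$ and the Tor long exact sequence, and control the error by bounding $\beta_j(e_2,P,R/Q)$ for the finitely many primes $Q\in\mathcal{S}(R/\q)$. The case split $\gamma=\gamma(R/\q)$ versus $\gamma>\gamma(R/\q)$ is also handled in the same spirit (though the paper argues the latter via upper semi-continuity of the limit function and quasi-compactness of $\Spec(R)$, rather than via a direct bound on $\beta_i(e_2,P,R/\q)/q_2^\gamma$).

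However, the step you flag as ``the main obstacle'' is in fact the heart of the lemma, and you have not carried it out. Neither of your two suggested routes is quite right. The reference \cite{Polstra2018} supplies uniform bounds of Hilbert--Kunz type (the case $j=0$), not uniform bounds on higher $\Tor_j$; there is no ready-made statement there to cite. And ``Dutta's filtration applied to syzygies of $R/Q$'' is not the mechanism the paper uses. What the paper actually does is apply Lemma~\ref{Dutta's Lemma} a \emph{second} time, now to each domain $R/\p$ with $\p\in\mathcal{S}(R/\q)$: this produces a fixed finite set $\mathcal{S}$ of primes and filtrations of $F^{e_2}_*(R/\p)$ whose factors are $R/\p'$ with $\p'\in\mathcal{S}$, each occurring at most $Dq_2^{\gamma'-1}$ times. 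One is then reduced to bounding $\beta_j(0,P,R/\p')=\lambda_{R_P}(\Tor_j^{R_P}(\kappa(P),(R/\p')_P))$ uniformly in $P$ for the finitely many $\p'\in\mathcal{S}$, and this is immediate: choose once and for all a free resolution of $R/\p'$ over $R$, and let $E_j$ be the rank in homological degree $j$; localizing at $P$ gives $\beta_j(0,P,R/\p')\le E_j$. This two-step Dutta argument, terminating in a bound by global syzygy ranks, is the missing technical content; once you insert it, your proof becomes essentially identical to the paper's.
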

\begin{proof}
Let $\gamma':= \gamma(R/\q)$. Note that, for all $P \in \Spec(R)$, the limit
\[
\ds \lim_{e \to \infty} \frac{\beta_i(e,P,R/\q)}{p^{e\gamma'}}
\]
exists and it is finite by \cite{Seibert}. We will first show that $\frac{\beta_i(e,\bullet,R/\q)}{p^{e\gamma'}}$ converges uniformly to this limit. Let $q_1=p^{e_1}$. Consider a set of primes $\mathcal{S}(R/\q)$ as in Lemma \ref{Dutta's Lemma} for the inclusion $(R/\q)^{q_1^{\gamma'}} \subseteq F^{e_1}_*(R/\q)$, and let $C$ be the constant given by the Lemma. For each $\p \in \mathcal{S}(R/\q)$, the ring $R/\p$ is an F-finite domain with $\gamma(R/\p) \leq \gamma'-1$. Applying Lemma \ref{Dutta's Lemma} to each $R/\p$, we obtain lists $\mathcal{S}(R/\p)$ and constants $D_\p$. Let $\mathcal{S} = \bigcup_{\p \in \mathcal{S}(R/\q)} \mathcal{S}(R/\p)$, and let $D = \max\{D_\p \mid \p \in \mathcal{S}\}$. Note that, for all $\p \in \mathcal{S}(R/\q)$ and all $q_2=p^{e_2}$, the inclusion $(R/\p)^{q_2^{\gamma(R/\p)}} \subseteq F^{e_2}_*(R/\p)$ has a filtration by cyclic modules of the form $R/\a$, with $\a$ inside $\mathcal{S}$. Furthermore, each ideal in such a filtration appears at most $Dq_2^{\gamma(R/\p)} \leq D q_2^{\gamma'-1}$ times.
For an integer $j \geq 0$ and a prime $\p' \in \Spec(R)$, let $E_{j,\p'}$ be the minimal number of generators of a $j$-th syzygy of $R/\p'$ over $R$. Let $E_j:=\max\{E_{j,\p'} \mid \p' \in \mathcal{S}\}$. Note that $\lambda_{R_P}(\Tor_j^{R_P}(\kappa(P),(R/\p')_P))\leq E_j$ for all $\p' \in \mathcal{S}$ and all primes $P \in \Spec(R)$, since after localizing a resolution of $R/\p'$ over $R$ at $P$ it stays exact, but may not be minimal. 

Now let $P \in \Spec(R)$. After localizing everything at $P$ we still have filtrations as before, possibly with fewer factors, since some of them may have collapsed. We remove from the lists $\mathcal{S}(R/\q)$ and $\mathcal{S}$ prime ideals $\p$ such that $\p R_P = R_P$; we still call the new lists $\mathcal{S}(R/\q)$ and $\mathcal{S}$. If $(R/\q)_P = 0$ there is nothing to show, so let us assume that $(R/\q)_P \ne 0$. Consider the short exact sequence
\[
\ds 0 \longrightarrow  (R/\q)_P^{q_1^{\gamma'}} \longrightarrow F^{e_1}_*(R/\q)_P \longrightarrow T(q_1)_P \longrightarrow 0
\]
where $T(q_1)_P$ are $(R/\q)_P$-modules of dimension at most $\dim(R/\q)-1$. The functor $F^{e_2}_*$ is exact, and yields a short exact sequence
\[
\ds 0 \longrightarrow (F^{e_2}_*(R/\q)_P)^{q_1^{\gamma'}} \longrightarrow F^{e_1+e_2}_*(R/\q)_P \longrightarrow F^{e_2}_*T(q_1)_P \longrightarrow 0
\]
Applying $\Tor^{R_P}(\kappa(P),-)$ and counting lengths we obtain
\[
\ds \left|\lambda\left(\Tor_i^{R_P}(\kappa(P),F^{e_1+e_2}_*(R/\q)_P)\right) - p^{e_1\gamma'} \lambda\left(\Tor_i^{R_P}(\kappa(P),F^{e_2}_*(R/\q)_P)\right) \right| \leq\]
\[
\ds \leq  \max\left\{ \lambda\left(\Tor_i^{R_P}(\kappa(P),F^{e_2}_*T(q_1)_P)\right),  \lambda\left(\Tor_{i+1}^{R_P}(\kappa(P),F^{e_2}_*T(q_1)_P)\right) \right\}.
\]
Equivalently, we obtain that
\[
\ds \left| \beta_i(e_1+e_2,P,R/\q) - p^{e_1\gamma'} \beta_i(e_2,P,R/\q) \right| \leq \max\left\{ \beta_i(e_2,P,T(q_1)),\beta_{i+1}(e_2,P,T(q_1))\right\}.
\]
The modules $T(q_1)_P$ have filtrations $0 \subseteq T_1 \subseteq \ldots \subseteq T_{i(q_1)} = T(q_1)_P$ as in Lemma \ref{Dutta's Lemma}, and by exactness of $F^{e_2}_*$ we then have filtrations $0 \subseteq F^{e_2}_*T_1 \subseteq \ldots \subseteq F^{e_2}_*T_{i(q_1)} = F^{e_2}_*T(q_1)_P$. The relative quotients are isomorphic to $F^{e_2}_*(R/\p)_P$, for some $\p \in \mathcal{S}$ appearing at most $Cq_1^{\gamma'}$ times in the filtration. Applying $\Tor^{R_P}_\bullet(\kappa(P),-)$ to the resulting short exact sequences, for all $j$ we then have that 
\[
\ds \beta_j(e_2,P,T(q_1)) \leq C\left|\mathcal{S}(R/\q)\right| q_1^{\gamma'} \max\left\{\beta_j(e_2,P,R/\p) \mid \p \in \mathcal{S}(R/\q)\right\}.
\]
For $\p \in \mathcal{S}(R/\q)$, the inclusion $(R/\p)_P^{q_2^{\gamma(R/\p)}} \subseteq F^{e_2}_*(R/\p)_P$ has a filtration by prime ideals in $\mathcal{S}$ appearing at most $Dq_2^{\gamma'-1}$ times. Applying $\Tor^{R_P}_\bullet(\kappa(P),-)$ to the resulting short exact sequences, we obtain that for all $\p \in \mathcal{S}(R/\q)$ and all $j \geq 0$
\[
\ds  \beta_j(e_2,P,R/\p) \leq D\left|\mathcal{S}\right|q_2^{\gamma'-1}\max\{\beta_j(0,P,R/\p') \mid \p' \in \mathcal{S} \} \leq D\left|\mathcal{S}\right|E_jq_2^{\gamma'-1},
\]
where $\beta_j(0,P,R/\p')$ is just the $j$-th Betti number of the $R_P$-module $F^0_*((R/\p')_P) = (R/\p')_P$, that is, $\lambda_{R_P}(\Tor_j^{R_P}(\kappa(P),(R/\p')_P))$. Recall that the constants $C,D,E_j$ are completely independent of $q$, $q'$, and the prime $P$. Set $A:= CD\left|\mathcal{S}\right| \left|\mathcal{S}'\right|\max\{E_i,E_{i+1}\}$ and divide by $(q_1q_2)^{\gamma'}$, to obtain
\[
\ds \left| \frac{\beta_i(e_1+e_2,P,R/\q)}{(q_1q_2)^{\gamma'}} - \frac{\beta_i(e_2,P,R/\q)}{q_2^{\gamma'}}\right| \leq \frac{A}{q_2}
\]
for all $q_1,q_2$, for all $P \in \Spec(R)$. This shows that $\frac{\beta_i(e,\bullet,R/\q)}{p^{e\gamma'}}$ converges uniformly.

If $\gamma = \gamma'$ then there is nothing else to prove. If $\gamma>\gamma'$, then the sequence 
\[
\ds \frac{\beta_i(e,P,R/\q)}{p^{e\gamma}} = \frac{\beta_i(e,P,R/\q)}{p^{e\gamma'}} \cdot \frac{1}{p^{e(\gamma-\gamma')}}
\]
converges to zero. Furthermore, the convergence is still uniform, and to see this it is enough to show that the limit function $\lim\limits_{e \to \infty} \frac{\beta_i(e,\bullet,R/\q)}{p^{e\gamma'}}$ is bounded on $\Spec(R)$. To see that, observe that, combining uniform convergence of the sequence $\frac{\beta_i(e,\bullet,R/\q)}{p^{e\gamma'}}$ with Proposition \ref{dense upper semicont}, we obtain that $P \mapsto \lim\limits_{e \to \infty} \frac{\beta_i(e,P,R/\q)}{p^{e\gamma'}}$ is upper semi-continuous. Finally, by quasi-compactness of $\Spec(R)$, we conclude that
\[
\ds \sup\left\{\lim_{e \to \infty} \frac{\beta_i(e,P,R/\q)}{p^{e\gamma'}} \mid P \in \Spec(R)\right\} = \max\left\{\lim_{e \to \infty} \frac{\beta_i(e,P,R/\q)}{p^{e\gamma'}} \mid P \in \Spec(R)\right\}< \infty. \qedhere
\]
\end{proof}

\begin{Theorem} \label{uniform convergence general} Let $R$ be an F-finite ring, and let $M$ be a finitely generated $R$-module. Let $\gamma$ be an integer satisfying $\gamma \geq \gamma(M)$. For any fixed $i \in \Z_{\geq 0}$, the sequence of functions
\[
\xymatrixrowsep{1mm}
\xymatrix{
\Spec(R) \ar[r] & \R \\
P \ar[r] & \ds \frac{\beta_i(e,P,M)}{p^{e\gamma}}  
}
\]
is uniformly bounded over $\Spec(R)$, and converges uniformly to its limits as $e \to \infty$.
\end{Theorem}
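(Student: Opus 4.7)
The plan is to bootstrap from Lemma~\ref{uniform convergence primes} to a general finitely generated $M$ via a prime filtration. Choose $0=M_0\subsetneq M_1\subsetneq\cdots\subsetneq M_n=M$ with $M_k/M_{k-1}\cong R/\q_k$ for primes $\q_k\in\Spec(R)$. Since $\ann(M)\subseteq\q_k$ for each $k$, we have $\gamma(R/\q_k)\leq\gamma(M)\leq\gamma$, so Lemma~\ref{uniform convergence primes} applies to each $R/\q_k$ with our choice of $\gamma$.

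For the uniform boundedness, applying the exact functor $F^e_*$ to the filtration and iterating the subadditivity $\beta_i(B)\leq\beta_i(A)+\beta_i(C)$ coming from the Tor long exact sequence of $0\to A\to B\to C\to 0$ yields $\beta_i(e,P,M)\leq\sum_{k=1}^n\beta_i(e,P,R/\q_k)$ for every $P\in\Spec(R)$ and $e\geq 0$. By Lemma~\ref{uniform convergence primes}, each summand $\beta_i(e,\cdot,R/\q_k)/p^{e\gamma}$ converges uniformly on $\Spec(R)$. The pointwise limit, being a uniform limit of upper semi-continuous functions (Proposition~\ref{dense upper semicont}), is itself upper semi-continuous, and so attains its maximum on the quasi-compact space $\Spec(R)$. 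Combined with uniform convergence, this gives a uniform bound for each summand and hence for $\beta_i(e,P,M)/p^{e\gamma}$.

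For uniform convergence, the plan is to adapt the proof of Lemma~\ref{uniform convergence primes} to the module setting via a Dutta-style filtration of $F^{e_1}_*M$. Applying Lemma~\ref{Dutta's Lemma} to each F-finite domain $R/\q_k$ gives a short exact sequence $0\to (R/\q_k)^{\oplus p^{e_1\gamma_k}}\to F^{e_1}_*(R/\q_k)\to T_k(e_1)\to 0$, in which $T_k(e_1)$ has a prime filtration by primes $\p$ from a finite set $\mathcal{S}(R/\q_k)$ with $\gamma(R/\p)<\gamma_k$, each $R/\p$ appearing at most $C_k p^{e_1\gamma_k}$ times. Refining the filtration $F^{e_1}_*M_0\subset\cdots\subset F^{e_1}_*M$ by these Dutta filtrations yields a prime filtration of $F^{e_1}_*M$ with factors $R/\q_k$ appearing $p^{e_1\gamma_k}$ times (for each $k$), together with auxiliary $R/\p$ (for $\p$ in the finite set $\mathcal{S}=\bigcup_k\mathcal{S}(R/\q_k)$) appearing at most $C p^{e_1\gamma}$ times, all with $\gamma(R/\p)<\gamma$. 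Applying $F^{e_2}_*$ and tracking lengths through $\Tor_\bullet^{R_P}(\kappa(P),-)$ along this filtration, using both $\beta_i(B)\leq\beta_i(A)+\beta_i(C)$ and $\beta_i(A)\leq\beta_i(B)+\beta_{i+1}(C)$ as in the proof of Lemma~\ref{uniform convergence primes}, yields a Cauchy-type estimate that, combined with the Cauchy bound from Lemma~\ref{uniform convergence primes} for each $R/\q_k$, shows the sequence $\beta_i(e,\cdot,M)/p^{e\gamma}$ is uniformly Cauchy on $\Spec(R)$, and hence uniformly convergent.

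The main technical obstacle is propagating a two-sided bound through the long Dutta-style filtration, since the bulk factors $(R/\q_k)^{\oplus p^{e_1\gamma_k}}$ are only subquotients of $F^{e_1}_*M$, not direct summands. Iterating the dual lower bound $\beta_i(A)\leq\beta_i(B)+\beta_{i+1}(C)$ naively generates error terms involving $\beta_{i+1}$ of the bulk factors themselves, which do not vanish after normalization by $p^{(e_1+e_2)\gamma}$ when $\gamma_k=\gamma$. The resolution is to order the refinement so that, for each $k$, the $p^{e_1\gamma_k}$ copies of $R/\q_k$ are treated as a single block contributing $p^{e_1\gamma_k}\beta_i(e_2,P,R/\q_k)$; the remaining errors then involve only Betti numbers of the auxiliary primes $R/\p$, and vanish after normalization since $\gamma(R/\p)<\gamma$.
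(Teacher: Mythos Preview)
Your uniform boundedness argument via subadditivity along a prime filtration is fine. The gap is in the uniform convergence part: the proposed ``resolution'' of the two-sided bound does not work.

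The obstacle you name is real, and grouping the $p^{e_1\gamma_k}$ copies of each $R/\q_k$ into a single block does not dissolve it. That grouping eliminates errors arising \emph{within} a block, but not the cross-terms between distinct bulk blocks. Concretely, take $M$ with a two-step prime filtration by $R/\q_1$ and $R/\q_2$, both with $\gamma_k=\gamma$, and for simplicity suppose the auxiliary quotients $T_k(e_1)$ vanish. After applying $F^{e_2}_*$ you have a short exact sequence
\[
0\longrightarrow (F^{e_2}_*(R/\q_1))^{q_1^{\gamma}}\longrightarrow F^{e_1+e_2}_*M\longrightarrow (F^{e_2}_*(R/\q_2))^{q_1^{\gamma}}\longrightarrow 0.
\]
The Tor long exact sequence gives $\beta_i$ of the middle term equal to $q_1^\gamma\bigl(\beta_i(e_2,P,R/\q_1)+\beta_i(e_2,P,R/\q_2)\bigr)$ minus the lengths of the images of the two connecting homomorphisms, and these are bounded only by $q_1^\gamma\beta_{i+1}(e_2,P,R/\q_2)$ and $q_1^\gamma\beta_{i-1}(e_2,P,R/\q_1)$. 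Both are of order $(q_1q_2)^\gamma$ and survive division by $(q_1q_2)^\gamma$. No reordering of the filtration avoids this: whenever two bulk blocks are stacked as subquotients rather than summands, the Tor connecting maps between them are uncontrolled. So the claim that ``the remaining errors involve only Betti numbers of the auxiliary primes'' is false already for $n=2$.

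The paper sidesteps this entirely. Rather than comparing $M$ to $N:=\bigoplus_k R/\q_k$ through a filtration, it constructs an actual $R$-linear map $\varphi\colon M\to N$ that becomes an isomorphism after inverting the complement of $\bigcup\Min(M)$ (first reducing to $\ann(M)$ radical by replacing $M$ with $F^{e_0}_*M$). Then $K=\ker\varphi$ and $C=\coker\varphi$ satisfy $\gamma(K),\gamma(C)\le\gamma(M)-1$, so by induction on $\gamma(M)$ their normalized Betti numbers tend to zero uniformly. Two applications of the Tor inequalities, one for each of the short exact sequences $0\to K\to M\to\im\varphi\to 0$ and $0\to\im\varphi\to N\to C\to 0$, yield $\left|\beta_i(e,\bullet,M)-\beta_i(e,\bullet,N)\right|$ controlled by terms in $K$ and $C$ alone. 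Since $N$ is an honest direct sum, $\beta_i(e,\bullet,N)=\sum_k\beta_i(e,\bullet,R/\q_k)$ on the nose, and Lemma~\ref{uniform convergence primes} finishes. The generic-isomorphism trick is precisely what converts a one-sided filtration estimate into a two-sided one; this is the idea your argument is missing.
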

\begin{proof}
We proceed by induction on $\gamma(M)$. If $\gamma(M) = 0$, then $\Supp(M)$ consists of finitely many maximal ideals $\m_1,\ldots,\m_r$. In addition, there exists $e_0$, depending on $M$, such that $\ann(F^e_*M) = \m_1\cap \ldots \cap \m_t$ for all $e \geq e_0$. By the Chinese Remainder Theorem, we have that $F^e_*M \cong \oplus_{j=1}^r F^e_*(R/\m_j)^{\lambda_{R_{\m_j}}(M_{\m_j})}$ for all $e \geq e_0$. Thus, for $e \geq e_0$, we have
\[
\ds \frac{\beta_i(e,\bullet,M)}{p^{e\gamma}} = \sum_{j=1}^r \lambda_{R_{\m_j}}(M_{\m_j})\frac{\beta_i(e,\bullet,R/\m_j)}{p^{e\gamma}}.
\]
Uniform convergence of $\frac{\beta_i(e,\bullet,M)}{p^{e\gamma}}$ then follows from Lemma \ref{uniform convergence primes}, since the first $e_0-1$ terms do not affect this kind of considerations. Furthermore, by Proposition~\ref{dense upper semicont}, the function $\frac{\beta_i(e,\bullet,M)}{p^{e\gamma}}$ is bounded on $\Spec(R)$ for every fixed $e \in \Z_{>0}$, as a consequence of its upper semi-continuity and of quasi-compactness of $\Spec(R)$. It then follows that $\left\{\frac{\beta_i(e,\bullet,R/\m_j)}{p^{e\gamma}}\right\}_{e \in \Z_{>0}}$ is uniformly bounded on $\Spec(R)$ for each $j$, and thus so is $\left\{\frac{\beta_i(e,\bullet,M)}{p^{e\gamma}}\right\}_{e\in \Z_{>0}}$.

Now assume that $\gamma(M) > 0$, and suppose that $\ann(M)$ is radical first. Consider a prime filtration of $M$:
\[
\ds 0 = M_0 \subseteq M_1 \subseteq M_2 \subseteq \ldots \subseteq M_t = M,
\]
where $M_j/M_{j-1} \cong R/P_j$ for some $P_j \in \Spec(R)$, for $j=1,\ldots,t$. Consider the $R$-module $N:= \bigoplus_{j=1}^t R/P_j$, and let $W = R \smallsetminus \bigcup\{\p \mid \p \in \Min(M)\}$. Since $\ann(M)$ is radical, we have an isomorphism $M_W \cong \prod_{\p \in \Min(M)} \kappa(\p)^{\lambda_{R_\p}(M_\p)} \cong N_W$ of $R_W$-modules. Because $M$ and $N$ are finitely generated over $R$, we can find an $R$-linear map $\varphi:M \to N$ that, after localizing at $W$, becomes an isomorphism. In addition, if we write
\[
\ds 0 \longrightarrow K \longrightarrow M \stackrel{\varphi}{\longrightarrow} N \longrightarrow C \longrightarrow 0,
\]
we have that $\gamma(K)$ and $\gamma(C)$ are at most $\gamma(M)-1$. Denote by $T$ the image of $\varphi$. After applying the functors $F^e_*(-)$ and $\Tor^{R_\bullet}(\kappa(\bullet),-_\bullet)$, and comparing lengths, we obtain that
\[
\ds \left| \beta_i(e,\bullet,M) - \beta_i(e,\bullet,T) \right| \leq \max\{ \beta_i(e,\bullet,K), \beta_{i-1}(e,\bullet,K)\}
\]
and that 
\[
\ds \left| \beta_i(e,\bullet,T) - \beta_i(e,\bullet,N) \right| \leq \max\{ \beta_i(e,\bullet,C), \beta_{i+1}(e,\bullet,C)\}.
\]
By the triangle inequality, we get that
\[
\ds \left| \beta_i(e,\bullet,M) - \beta_i(e,\bullet,N) \right| \leq n_i(\bullet),
\]
where $n_i(\bullet) = \max\{\beta_i(e,\bullet,K), \beta_{i-1}(e,\bullet,K)\} + \max\{ \beta_i(e,\bullet,C), \beta_{i+1}(e,\bullet,C)\}$. By inductive hypothesis, we have that the sequences 
\[
\ds \frac{\beta_i(e,\bullet,K)}{p^{e\gamma}}, \ \ \frac{\beta_{i-1}i(e,\bullet,K)}{p^{e\gamma}}, \ \ \frac{\beta_i(e,\bullet,C)}{p^{e\gamma}}, \ \ \frac{\beta_{i+1}(e,\bullet,C)}{p^{e\gamma}}
\]
are uniformly bounded, and converge uniformly on $\Spec(R)$ as $e \to \infty$. Therefore, the sequence of functions $\frac{n_i(\bullet)}{p^{e\gamma}}$ given by the sum of the maxima as defined above satisfies the same properties. In addition, since $\gamma(K)$ and $\gamma(C)$ are at most $\gamma(M)-1 < \gamma$, we have that $\frac{n_i(\bullet)}{p^{e\gamma}}$ converges to zero uniformly. In particular, we have that
\[
\ds  \beta_i^F(M_\bullet,\gamma) = \lim_{e \to \infty} \frac{\beta_i(e,\bullet,M)}{p^{e\gamma}} = \lim_{e \to \infty} \frac{\beta_i(e,\bullet,N)}{p^{e\gamma}}.
\]
Since $F^e_*N \cong \oplus_{j=1}^t F^e_*(R/P_j)$, the sequence $\left\{\frac{\beta_i(e,\bullet,N)}{p^{e\gamma}}\right\}$ converges uniformly by Lemma \ref{uniform convergence primes}. Therefore, for all $\varepsilon > 0$, there exists $e_1>0$ such that for all $e > e_1$ we have
\begin{itemize}
\item $\ds \left| \frac{\beta_i(e,\bullet,N)}{p^{e\gamma}} - \beta_i^F(M_\bullet) \right| < \frac{\varepsilon}{2}$.
\item $\ds \frac{n_i(\bullet)}{p^{e\gamma}} < \frac{\varepsilon}{2}$.
\end{itemize}
By the triangle inequality, we obtain
\[
\ds \left| \frac{\beta_i(e,\bullet,M)}{p^{e\gamma}} - \beta_i^F(M_\bullet) \right| \leq \left| \frac{\beta_i(e,\bullet,N)}{p^{e\gamma}} - \beta_i^F(M_\bullet) \right| + \frac{n_i(\bullet)}{p^{e\gamma}} < \varepsilon,
\]
that is, $\frac{\beta_i(e,\bullet,M)}{p^{e\gamma}}$ converges uniformly. Finally, the fact that $\left\{\frac{\beta_i(e,\bullet,M)}{p^{e\gamma}}\right\}_{e \in \Z_{>0}}$ is uniformly bounded on $\Spec(R)$ follows from Proposition \ref{dense upper semicont}, as for the case $\gamma(M)=0$.

If $R/\ann(M)$ is not reduced, we can find $e_0>0$ such that $\ann(F^{e}_*M)=\sqrt{\ann(M)}$ for all $e\geq e_0$. Consider $M':=F^{e_0}_*M$, and note that $F^e_*M'  \cong F^{e+e_0}_*M$ for all $e \geq 0$. In addition, $R/\ann(M')$ is reduced. We replace the sequence $\ds\frac{\beta_i(e,\bullet,M)}{p^{e\gamma}}$ with the sequence $\ds \frac{\beta_i(e,\bullet,M')}{p^{e\gamma}}$. Since they only differ by finitely many terms, and by a correction term of $p^{e_0\gamma}$, uniform convergence and uniform boundedness of the former would follow those for the latter. Given that $\ann(M')$ is radical, this has been proved above.
\end{proof}
Let $i \geq 0$ be an integer, $M$ be a finitely generated $R$-module, and $\gamma \geq \gamma(M)$ be an integer. Using the notation introduced in Section \ref{Background}, we let $\beta_i^F(M_\bullet,\gamma)$ be the limit function of the sequence considered in Theorem~\ref{uniform convergence general}. Recall that, for $P \in \Spec(R)$, the $i$-th Frobenius Betti number of the $R_P$-module $M_P$ is
\[
\ds \beta_i^F(M_P) = \lim_{e \to \infty} \frac{\beta_i(e,P,M)}{p^{e\gamma(M_P)}}.
\]
The difference between $\beta_i^F(M_P,\gamma)$ and $\beta_i^F(M_P)$ is a possibly different normalization in the denominator. More specifically, let $Z_{M,\gamma} = \{P \in \Spec(R) \mid \gamma(M_P) = \gamma\}$. The set $Z_{M,\gamma}$ in the case $M=R$ and $\gamma=\gamma(R)$ has been introduced in \cite{DSPY} to study relations between local and global invariants for general F-finite rings. Clearly one has
$\beta_i^F(M_P,\gamma) = \beta_i^F(M_P)$ whenever $P \in Z_{M,\gamma}$. On the other hand, one has $\beta_i^F(M_P,\gamma)=0$ if $P \notin Z_{M,\gamma}$. 
Similar considerations hold for the sequence of functions
\[
\xymatrixrowsep{1mm}
\xymatrix{
\Spec(R) \ar[r] & \R \\
P \ar[r] & \ds \frac{\chi_i(e,P,M)}{p^{e\gamma(R)}} 
}
\]
and its limit as $e \to \infty$, that we denote by $\chi_i^F(M_\bullet,\gamma):\Spec(R) \to \R$.

\begin{Corollary} Let $R$ be an F-finite ring, $M$ be a finitely generated $R$-module, and $\gamma$ be an integer satisfying $\gamma \geq \gamma(M)$. For any fixed $i \in \Z_{\geq 0}$, the functions 
\[
\xymatrixrowsep{1mm}
\xymatrixcolsep{2mm}
\xymatrix{ 
\Spec(R) \ar[r] & \R  & \ \mbox{ and } \ & \Spec(R) \ar[r] & \R  \\
P \ar[r] & \beta_i^F(M_P,\gamma) & & P \ar[r] & \chi_i^F(M_P,\gamma)
}
\]
are upper semi-continuous.
\end{Corollary}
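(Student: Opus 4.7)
The plan is to combine the two main technical results from the preceding part of the section: for each fixed $e$ the functions $P\mapsto \beta_i(e,P,M)$ and $P\mapsto \chi_i(e,P,M)$ are upper semi-continuous by Proposition~\ref{dense upper semicont}, while Theorem~\ref{uniform convergence general} (together with its analogue for $\chi_i$) guarantees that after normalizing by $p^{e\gamma}$ these sequences converge uniformly on $\Spec(R)$. The conclusion will then be a general topological fact: a uniform limit of (dense) upper semi-continuous real-valued functions on a topological space is again upper semi-continuous. The role of $\gamma\ge \gamma(M)$ is merely to fix the normalization; since $p^{e\gamma}>0$, dividing an upper semi-continuous function by it preserves the property.

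First I would handle $\beta_i^F(M_\bullet,\gamma)$. Fix $P\in \Spec(R)$ and $\varepsilon>0$. Using the uniform convergence granted by Theorem~\ref{uniform convergence general}, pick $e$ large enough that
\[
\left|\frac{\beta_i(e,Q,M)}{p^{e\gamma}} - \beta_i^F(M_Q,\gamma)\right| < \varepsilon/3
\]
for every $Q\in \Spec(R)$ simultaneously. By Proposition~\ref{dense upper semicont}, the function $Q\mapsto \beta_i(e,Q,M)/p^{e\gamma}$ is upper semi-continuous at $P$, so there is an open neighborhood $U$ of $P$ on which
\[
\frac{\beta_i(e,Q,M)}{p^{e\gamma}} < \frac{\beta_i(e,P,M)}{p^{e\gamma}} + \varepsilon/3.
\]
Combining the two estimates through the triangle inequality yields $\beta_i^F(M_Q,\gamma) < \beta_i^F(M_P,\gamma) + \varepsilon$ for all $Q\in U$, which is exactly upper semi-continuity at $P$.

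For the Euler-characteristic version $\chi_i^F(M_\bullet,\gamma)$, the argument is the same once we observe that the normalized sequence $\chi_i(e,\bullet,M)/p^{e\gamma}$ converges uniformly on $\Spec(R)$. This follows from Theorem~\ref{uniform convergence general} applied for each $j=0,\ldots,i$: the identity $\chi_i(e,P,M)=\sum_{j=0}^{i}(-1)^{i-j}\beta_j(e,P,M)$ expresses the sequence as a finite alternating sum of uniformly convergent sequences, so it is itself uniformly convergent. Since each term $\chi_i(e,\bullet,M)$ is upper semi-continuous by Proposition~\ref{dense upper semicont}, the same three-$\varepsilon$ argument as above yields upper semi-continuity of the limit $\chi_i^F(M_\bullet,\gamma)$.

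There is no real obstacle: all the analytic content is already packaged in Proposition~\ref{dense upper semicont} and Theorem~\ref{uniform convergence general}. The only point requiring a touch of care is the fact that \emph{pointwise} limits of upper semi-continuous functions are in general \emph{not} upper semi-continuous, so one genuinely needs the uniform convergence (rather than merely the existence of the limit) established in the previous theorem; that is precisely why the uniform convergence was proved in the strong form given above.
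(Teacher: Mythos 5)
Your proposal follows exactly the same route as the paper: upper semi-continuity of each $\beta_i(e,\bullet,M)/p^{e\gamma}$ and $\chi_i(e,\bullet,M)/p^{e\gamma}$ from Proposition~\ref{dense upper semicont}, uniform convergence from Theorem~\ref{uniform convergence general} (with $\chi_i$ handled as a finite alternating sum of the $\beta_j$'s), and then the standard fact that a uniform limit of upper semi-continuous functions is upper semi-continuous. The paper merely invokes this last fact while you spell out the three-$\varepsilon$ argument; the substance is identical.
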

\begin{proof}
Since dividing by $p^{e\gamma}$ does not affect semi-continuity of the functions $P \in \Spec(R) \mapsto \beta_i(e,P,M)$ and $P \in \Spec(R) \mapsto \chi_i(e,P,M)$, Proposition \ref{dense upper semicont} gives that $\frac{\beta_i(e,\bullet,M)}{p^{e\gamma}}$ and $\frac{\chi_i(e,\bullet,M)}{p^{e\gamma}}$ are upper semi-continuous for all $e \geq 0$. Because the second sequence is built as a finite alternating sum of elements from the first, Theorem \ref{uniform convergence general} gives uniform convergence over $\Spec(R)$ as $e \to \infty$ for both sequences. It then follows that $P \in \Spec(R) \mapsto \beta_i^F(M_P,\gamma)$ and $P \in \Spec(R) \mapsto \chi_i^F(M_P,\gamma)$ are upper semi-continuous, as they are the uniform limit of upper semi-continuous functions.
\end{proof}

\section{Global Frobenius Betti numbers and Frobenius Euler characteristic} \label{Section betas}

In this section, we introduce and justify the notion of global Frobenius Betti numbers and Frobenius Euler characteristic. In what follows, $\mu_R(-)$ will denote the minimal number of generators of an $R$-module.

We start with an easy consequence of Schanuel's lemma.

\begin{Lemma} \label{Schanuel}
Let $R$ be a Noetherian ring of Krull dimension $d$, and let $M$ be a
finitely generated $R$-module.  Let 
\begin{align*}
0 \lra \Omega \lra R^{b_{i-1}} \lra \dotsb \dotsb \lra R^{b_0} \lra M \lra 0&\\
0 \lra \Omega' \lra R^{b_{i-1}'} \lra \dotsb \dotsb \lra R^{b_0'} \lra M \lra 0
\end{align*}
be exact sequences. Then 
$\left| \left(\mu_R(\Omega)+\sum_{j = 1}^i(-1)^jb_{i-j}\right)
-\left(\mu_R(\Omega') +\sum_{j = 1}^i(-1)^jb_{i-j}'\right)\right| \leq d$.
\end{Lemma}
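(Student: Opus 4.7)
The plan is to reduce the comparison of the two alternating sums to a single stable isomorphism, and then estimate the minimal number of generators on both sides.

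First, I would apply the generalized Schanuel lemma to the two truncated free resolutions of $M$. This yields an isomorphism
\[
\Omega \oplus R^A \;\cong\; \Omega' \oplus R^{A'},
\]
where $A = b_{i-1}' + b_{i-2} + b_{i-3}' + \dotsb$ and $A' = b_{i-1} + b_{i-2}' + b_{i-3} + \dotsb$ are the alternating interleavings of the Betti numbers of the two resolutions. A direct parity check shows that
\[
A - A' \;=\; \sum_{j=1}^i (-1)^j b_{i-j} \;-\; \sum_{j=1}^i (-1)^j b_{i-j}',
\]
so the quantity to be bounded in the Lemma equals $(\mu_R(\Omega) + A) - (\mu_R(\Omega') + A')$.

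Second, for any finitely generated $R$-module $N$ and any integer $a \ge 0$, I would establish the auxiliary estimate
\[
\mu_R(N) + a - d \;\le\; \mu_R(N \oplus R^a) \;\le\; \mu_R(N) + a.
\]
The upper bound is obvious. The lower bound is the main technical input: it follows from the Forster--Swan theorem, $\mu_R(N) \le \max_{\p \in \Spec(R)}\bigl(\mu_{R_\p}(N_\p) + \dim(R/\p)\bigr)$, combined with $\dim(R/\p) \le d$ and the observation that at every prime $\p$ one has $\mu_{R_\p}(N_\p) + a = \mu_{R_\p}((N \oplus R^a)_\p) \le \mu_R(N \oplus R^a)$, using that minimal generator counts do not increase under localization.

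Finally, applying this estimate to $N = \Omega$ with $a = A$ and to $N = \Omega'$ with $a = A'$, both differences $\mu_R(\Omega) + A - \mu_R(\Omega \oplus R^A)$ and $\mu_R(\Omega') + A' - \mu_R(\Omega' \oplus R^{A'})$ lie in $[0,d]$. The Schanuel isomorphism above forces $\mu_R(\Omega \oplus R^A) = \mu_R(\Omega' \oplus R^{A'})$, so subtracting the two bounds gives $(\mu_R(\Omega) + A) - (\mu_R(\Omega') + A') \in [-d,d]$, which is exactly the desired inequality. The only non-trivial input is the Forster--Swan bound; the rest is bookkeeping, and the main place where one must be careful is the sign computation for the alternating sum, to ensure the interleaving produced by Schanuel matches the sign convention in the statement.
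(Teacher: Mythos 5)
Your proof is correct and takes essentially the same route as the paper's: Schanuel's lemma to get $\Omega \oplus R^A \cong \Omega' \oplus R^{A'}$, then the Forster--Swan theorem to bound the loss of $\mu_R$ under stabilization. The only cosmetic difference is that you package the Forster--Swan step into the symmetric auxiliary inequality $\mu_R(N)+a-d \le \mu_R(N\oplus R^a)\le \mu_R(N)+a$, which handles both directions at once, whereas the paper proves one inequality and invokes symmetry for the other.
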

\begin{proof}
By Schanuel's lemma, we have that 
\[
\Omega \oplus R^{\sum_{j \text{ odd}}b_{i-j}' + \sum_{j \text{ even}}b_{i-j}} \cong
\Omega' \oplus R^{\sum_{j \text{ odd}}b_{i-j} + \sum_{j \text{ even}}b_{i-j}'}.
\]
By the Forster-Swan Theorem \cite{Forster,Swan}, we may choose $\m \in \Max\Spec(R)$ such that $\mu_R(\Omega') \leq
\mu_{R_\m}(\Omega'_\m) + d$. Consequently we see
\begin{align*}
\mu_R(\Omega) + \sum_{j \text{ odd}}b_{i-j}' + \sum_{j \text{ even}}b_{i-j}
& \geq \mu_R(\Omega \oplus R^{\sum_{j \text{ odd}}b_{i-j}' + \sum_{j \text{ even}}b_{i-j}}) \\
&=  \mu_R(\Omega' \oplus R^{\sum_{j \text{ odd}}b_{i-j} + \sum_{j \text{ even}}b_{i-j}'}) \\
&\geq \mu_{R_\m}((\Omega'\oplus R^{\sum_{j \text{ odd}}b_{i-j} + \sum_{j \text{ even}}b_{i-j}'})_\m) \\
&= \mu_{R_\m}(\Omega'_\m) + {\sum_{j \text{ odd}}b_{i-j} + \sum_{j \text{ even}}b_{i-j}'}  \\
&\geq \mu_R(\Omega')-d+ {\sum_{j \text{ odd}}b_{i-j} + \sum_{j \text{ even}}b_{i-j}'}.
\end{align*}
Therefore $\left(\mu_R(\Omega')+\sum_{j = 1}^i(-1)^jb_{i-j}'\right)
-\left(\mu_R(\Omega) +\sum_{j = 1}^i(-1)^jb_{i-j}\right) \leq d$. 
Using a symmetric argument we establish the Lemma. 
\end{proof}

\begin{Remark} \label{Remark independence chi}
In the notation of Lemma \ref{Schanuel}, let $\gamma \geq\min\{1,d\}$ be an integer. For every $e \in \Z_{>0}$, fix a free resolution 
\[
\cdots \longrightarrow  R^{b_i(e)} \stackrel{\varphi_i(e)}{\longrightarrow} R^{b_{i-1}(e)} \stackrel{\varphi_{i-1}(e)}{\longrightarrow} \cdots\cdots \longrightarrow R^{b_0(e)} \stackrel{\varphi_0(e)}{\longrightarrow} F^e_* M \longrightarrow 0
\]
of $F^e_*M$. It follows from Lemma \ref{Schanuel} that
\[
\chi_i^F(M,\gamma) = \lim_{e \to \infty} \frac{\mu_R(\im(\varphi_i(e)))+\sum_{j =
  1}^i(-1)^jb_{i-j}(e)}{p^{e\gamma}}
\]
is independent of the choices of resolutions for $F^e_*M$. When $\gamma=\gamma(M)$, we omit $\gamma$ from the notation, and call $\chi_i^F(M)$ the {\it $i$-th (global) Frobenius Euler characteristic of $M$}. At the moment, we are not claiming that the limit exists.
\end{Remark}

To study global Frobenius Betti numbers, we need a version of Lemma \ref{Schanuel} that compares minimal number of generators of the modules $\im(\varphi_i(e))$ in different resolutions. First, we record the following special case of Lemma \ref{Schanuel}.

\begin{Lemma} \label{cor:Schanuel}
Let $R$ be a Noetherian ring of Krull dimension $d$, and let $M$ be a finitely generated $R$-module. Let $0 \to \Omega \to R^n \to M \to 0$ and $0 \to \Omega' \to R^n \to M \to 0$ be short exact sequences. Then $\left| \mu_R(\Omega)-\mu_R(\Omega') \right| \leq d$.
\end{Lemma}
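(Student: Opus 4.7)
The statement is precisely the $i=1$ case of Lemma~\ref{Schanuel}, so the quickest route is to simply specialize that lemma. Namely, I would apply Lemma~\ref{Schanuel} with $i=1$ and $b_0 = b_0' = n$. The alternating sum reduces to $\sum_{j=1}^{1}(-1)^j b_{1-j} = -n$ in both resolutions, so those tails cancel against each other, and Lemma~\ref{Schanuel} yields exactly
\[
\left|\mu_R(\Omega) - \mu_R(\Omega')\right| \leq d.
\]

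If instead one wants a self-contained argument, I would invoke the classical Schanuel lemma applied to the two presentations $0 \to \Omega \to R^n \to M \to 0$ and $0 \to \Omega' \to R^n \to M \to 0$ to conclude that $\Omega \oplus R^n \cong \Omega' \oplus R^n$. Localizing at any prime $\p$ and tensoring with $\kappa(\p)$, cancellation of the free summand $\kappa(\p)^n$ gives $\mu_{R_\p}(\Omega_\p) = \mu_{R_\p}(\Omega'_\p)$ for every $\p \in \Spec(R)$. The Forster--Swan theorem then produces a maximal ideal $\m$ such that $\mu_R(\Omega) \leq \mu_{R_\m}(\Omega_\m) + d$, and since $\mu_{R_\m}(\Omega_\m) = \mu_{R_\m}(\Omega'_\m) \leq \mu_R(\Omega')$, we obtain $\mu_R(\Omega) \leq \mu_R(\Omega') + d$. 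Swapping the roles of $\Omega$ and $\Omega'$ gives the reverse inequality.

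There is no genuine obstacle here: the result is a formal consequence of Lemma~\ref{Schanuel}, and the only content is the cancellation of equal rank terms in the alternating sum. The specialization approach is cleaner and keeps the paper tight, so that is what I would present as the proof.
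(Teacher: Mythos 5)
Your proposal is correct and matches the paper's approach: the paper explicitly introduces Lemma~\ref{cor:Schanuel} as a ``special case'' of Lemma~\ref{Schanuel}, which is exactly the $i=1$, $b_0=b_0'=n$ specialization you describe. Your optional self-contained argument via classical Schanuel, cancellation of the free summand after localizing, and Forster--Swan is also sound, but the specialization route is what the paper intends.
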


\begin{Definition}
Let $M$ be a finitely generated $R$-module, and let $e >0$ be an integer. Consider a free resolution of $F^e_* M$
\[
\cdots \longrightarrow  R^{b_i(e)} \stackrel{\varphi_i(e)}{\longrightarrow} R^{b_{i-1}(e)} \stackrel{\varphi_{i-1}(e)}{\longrightarrow} \cdots\cdots \longrightarrow R^{b_0(e)} \stackrel{\varphi_0(e)}{\longrightarrow} F^e_* M \longrightarrow 0.
\]
We say that the resolution is {\it minimal} if, setting $\Omega_i(e):= \im(\varphi_i(e))$, we have $\mu_R(\Omega_i(e)) = b_i(e)$ for all $i \geq 0$.
\end{Definition}

\begin{Lemma} \label{Lemma mu syzygies} Let $R$ be a Noetherian ring of Krull dimension $d$, and $M$ a finitely generated $R$-module. Let
\[
\cdots \longrightarrow  R^{b_i(e)} \stackrel{\varphi_i(e)}{\longrightarrow} R^{b_{i-1}(e)} \stackrel{\varphi_{i-1}(e)}{\longrightarrow} \cdots\cdots \longrightarrow R^{b_0(e)} \stackrel{\varphi_0(e)}{\longrightarrow} F^e_* M \longrightarrow 0.
\]
and
\[
\cdots \longrightarrow  R^{b_i'(e)} \stackrel{\psi_i(e)}{\longrightarrow} R^{b_{i-1}'(e)} \stackrel{\psi_{i-1}(e)}{\longrightarrow} \cdots\cdots \longrightarrow R^{b_0'(e)} \stackrel{\psi_0(e)}{\longrightarrow} F^e_* M \longrightarrow 0
\]
be minimal free resolutions of $F^e_*M$. 
Then $\left| b_i(e) - b_i'(e)\right| \leq d2^{i-1}$.
\end{Lemma}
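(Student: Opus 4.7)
My plan is to induct on $i$, using Lemma \ref{cor:Schanuel} for the base case and Lemma \ref{Schanuel} for the inductive step, exploiting the fact that for a minimal resolution the relation $\mu_R(\Omega_j(e)) = b_j(e)$ holds by definition.

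For the base case $i = 1$, both $b_0(e)$ and $b_0'(e)$ equal $\mu_R(F^e_*M)$, so they agree, and the two presentations
\[
0 \to \Omega_1(e) \to R^{b_0(e)} \to F^e_*M \to 0, \qquad 0 \to \Omega_1'(e) \to R^{b_0'(e)} \to F^e_*M \to 0
\]
have free modules of the same rank in the middle. Lemma \ref{cor:Schanuel} then gives $|b_1(e) - b_1'(e)| = |\mu_R(\Omega_1(e)) - \mu_R(\Omega_1'(e))| \leq d = d\cdot 2^{0}$.

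For the inductive step, suppose the bound $|b_j(e) - b_j'(e)| \leq d 2^{j-1}$ is known for all $1 \leq j \leq i-1$ (and recall that it is $0$ for $j=0$). Applying Lemma \ref{Schanuel} to the two truncations of length $i$ of the two minimal resolutions of $F^e_*M$, together with minimality (which turns every $\mu_R(\Omega_j(e))$ into $b_j(e)$), yields
\[
\left| (b_i(e) - b_i'(e)) + \sum_{j=1}^{i}(-1)^{j}\bigl(b_{i-j}(e) - b_{i-j}'(e)\bigr)\right| \leq d.
\]
Solving for $|b_i(e) - b_i'(e)|$ by the triangle inequality and using the inductive bounds, the worst case is
\[
|b_i(e) - b_i'(e)| \leq d + \sum_{j=1}^{i-1} d\, 2^{i-j-1} = d + d(2^{i-1} - 1) = d\,2^{i-1},
\]
which closes the induction (the $j=i$ summand contributes $0$).

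The proof is mostly bookkeeping, and I do not anticipate a serious obstacle: the only subtlety is making sure that the constant from Lemma \ref{Schanuel} is added \emph{once} at each step rather than propagating multiplicatively, which is what produces the exponential factor $2^{i-1}$ (each previous discrepancy doubles the accumulated error through the alternating sum). It is worth noting that the inductive bound is applied to the syzygies $\Omega_j(e)$ for $1 \leq j \leq i-1$, and the base rank equality $b_0(e) = b_0'(e)$ is what keeps the geometric sum from picking up an extra $d\,2^{i-1}$ term.
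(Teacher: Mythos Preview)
Your proof is correct and follows essentially the same strategy as the paper, which dispatches the lemma in one line as ``a repeated application of Lemma~\ref{cor:Schanuel}.'' The only difference is that you invoke the more general Lemma~\ref{Schanuel} in the inductive step rather than the special case Lemma~\ref{cor:Schanuel}; this is a harmless (and arguably cleaner) variation, since at stage $i$ the two truncated resolutions have different intermediate ranks $b_{i-1}(e)\neq b_{i-1}'(e)$ and different syzygy modules, so the equal-rank hypothesis of Lemma~\ref{cor:Schanuel} does not apply directly without first passing through the generalized Schanuel isomorphism used in the proof of Lemma~\ref{Schanuel}.
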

\begin{proof} This follows immediately from a repeated application of Lemma \ref{cor:Schanuel}.
\end{proof}



\begin{Remark} \label{Remark independence beta}
In the notation of Lemma \ref{Lemma mu syzygies}, let $\gamma\geq \min\{1,d\}$ be an integer. We have that
\[
\ds \beta_i^F(M,\gamma) = \lim_{e \to \infty} \frac{\mu_R(\Omega_i(e))}{p^{e\gamma}} = \lim_{e \to \infty} \frac{\mu_R(\Omega_i'(e))}{p^{e\gamma}},
\]
and is therefore independent of the choice of a minimal free resolution for $F^e_*M$. When $\gamma=\gamma(M)$, we simply write $\beta_i^F(M)$, and we call it the {\it $i$-th (global) Frobenius Betti number of $M$}. As in Remark \ref{Remark independence chi}, we are not yet claiming that the limits exist. We are only stating that one limit exists if and only if the other one does and, in this case, they coincide. Observe that $\beta_0^F(M,\gamma(R)) = \eHK(M)$ is the global Hilbert-Kunz multiplicity of $M$, therefore we know the limit exists in this case \cite{DSPY}.
\end{Remark}

\begin{Remark} \label{remark chi and chitilde}
For a finitely generated $R$-module $M$ and integers $i \geq 0$ and $\gamma$ between $\gamma(M)$ and $\gamma(R)$, recall the notation $Z_{M,\gamma}=\{ P \in \Spec(R) \mid \gamma(M_P) = \gamma\}$ introduced at the end of Section \ref{Section Uniform convergence Frob Betti}. We have already observed that $\chi_i^F(M_P,\gamma) = \chi_i^F(M_P)$ if $P \in Z_{M,\gamma}$, while $\chi_i^F(M_P,\gamma) = 0$ if $P \notin Z_{M,\gamma}$.
\end{Remark}
\begin{Proposition} \label{uniform limit chi} Let $R$ be an F-finite ring, $M$ be a finitely generated $R$-module, $i \geq 0$ and $\gamma$ be integers, with $\gamma\geq \gamma(M)$. For all integers $e \geq 0$, let $P_e \in \Spec(R)$ be such that $\chi_i(e,P_e,M) = \max\{\chi_i(e,P,M) \mid P \in \Spec(R)\}$. Then
\[
\ds \lim_{e \to \infty} \frac{\chi_i(e,P_e,M)}{p^{e\gamma}} = \lim_{e \to \infty} \chi_i^F(M_{P_e},\gamma) = \max\{\chi_i^F(M_P,\gamma) \mid P \in \Spec(R)\}.
\]
Let $\chi$ be the common value of the equation above. If either $Z_{M,\gamma} = \Spec(R)$ or $\chi \ne 0$, we also have
\[
\ds \ds \lim\limits_{e \to \infty} \frac{\chi_i(e,P_e,M)}{p^{e\gamma}} = \max\{\chi_i^F(M_P) \mid P \in Z_{M,\gamma}\}.
\]
\end{Proposition}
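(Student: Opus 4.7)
The plan is to combine two ingredients that have already been established: uniform convergence of $f_e(\bullet) := \chi_i(e,\bullet,M)/p^{e\gamma}$ to $f(\bullet) := \chi_i^F(M_\bullet,\gamma)$ on $\Spec(R)$, which follows from Theorem~\ref{uniform convergence general} applied to each $\beta_j$ with $j \le i$ together with the alternating-sum definition of $\chi_i$; and the fact, already noted in the Corollary following Theorem~\ref{uniform convergence general}, that $f_e$ and $f$ are upper semi-continuous on the quasi-compact space $\Spec(R)$, so all maxima appearing in the statement are actually attained. In particular, the points $P_e$ exist, and there is a $P^* \in \Spec(R)$ with $f(P^*) = \max_{P} f(P) =: c$.

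The first chain of equalities is a standard ``commute max with uniform limit'' argument. Fix $\varepsilon > 0$ and choose $e_0$ so that $|f_e(P) - f(P)| < \varepsilon$ for all $e \geq e_0$ and all $P \in \Spec(R)$. For $e \geq e_0$, maximality of $P_e$ for $f_e$ together with applying the uniform bound at $P^*$ yields
\[
f_e(P_e) \geq f_e(P^*) > f(P^*) - \varepsilon = c - \varepsilon,
\]
while applying it at $P_e$ gives $f_e(P_e) < f(P_e) + \varepsilon \leq c + \varepsilon$. Thus $f_e(P_e) \to c$. Applying the uniform bound a final time, $|f(P_e) - f_e(P_e)| < \varepsilon$, so $f(P_e) = \chi_i^F(M_{P_e},\gamma) \to c$ as well. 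This proves the first displayed equation.

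For the second part, I would use Remark~\ref{remark chi and chitilde}: $\chi_i^F(M_P,\gamma)$ equals $\chi_i^F(M_P)$ when $P \in Z_{M,\gamma}$ and equals $0$ otherwise. If $Z_{M,\gamma} = \Spec(R)$, the two notations agree pointwise and the conclusion is immediate. Otherwise $f$ takes the value $0$ at some point of $\Spec(R)$, so the overall maximum $\chi$ satisfies $\chi \geq 0$; combined with the hypothesis $\chi \ne 0$, this forces $\chi > 0$. Hence the maximum of $f$ cannot be realized outside $Z_{M,\gamma}$ (where $f \equiv 0$), so it is attained inside $Z_{M,\gamma}$, where $f$ agrees with $P \mapsto \chi_i^F(M_P)$, giving $\max\{\chi_i^F(M_P) \mid P \in Z_{M,\gamma}\} = \chi$, as desired.

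I do not expect a major obstacle: the argument is essentially elementary once uniform convergence, upper semi-continuity, and quasi-compactness are in hand. The only delicate point is the bookkeeping with the normalization factor $p^{e\gamma}$ when $\gamma > \gamma(M_P)$, which is precisely why the statement needs to distinguish the case $\chi = 0$ and why Remark~\ref{remark chi and chitilde} is invoked.
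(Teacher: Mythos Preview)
Your proposal is correct and follows essentially the same approach as the paper: both use the uniform convergence of $\chi_i(e,\bullet,M)/p^{e\gamma}$ (from Theorem~\ref{uniform convergence general}) to run the standard ``max commutes with uniform limit'' argument for the first part, and both invoke Remark~\ref{remark chi and chitilde} to handle the second part by noting that $\chi_i^F(M_P,\gamma)$ vanishes off $Z_{M,\gamma}$. Your version is slightly more explicit in justifying that the maxima are attained (via upper semi-continuity and quasi-compactness) and in observing that $\chi \geq 0$ when $Z_{M,\gamma} \neq \Spec(R)$, but the underlying argument is the same.
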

\begin{proof}
Let $Q \in \Spec(R)$ be such that $\chi_i^F(M_Q,\gamma) = \max\{\chi_i^F(M_P,\gamma) \mid P \in \Spec(R)\}$, and let $\varepsilon > 0$. By Theorem \ref{uniform convergence general}, the sequence $\chi_i(e,\bullet,M)/p^{e\gamma}$ converges uniformly to its limit $\chi_i^F(M_\bullet,\gamma)$ on $\Spec(R)$. Therefore, there exists $e_0$ such that for all $e > e_0$ 
\[
\ds \left| \frac{\chi_i(e,P,M)}{p^{e\gamma}} - \chi_i^F(M_P,\gamma)\right| < \frac{\varepsilon}{2}
\]
holds for all $P \in \Spec(R)$. Then, for all $e>e_0$ we obtain
\[
\ds \chi_i^F(M_Q,\gamma) \geq \chi_i^F(M_{P_e},\gamma) > \frac{\chi_i(e,P_e,M)}{p^{e\gamma}}- \frac{\varepsilon}{2} \geq \frac{\chi_i(e,Q,M)}{p^{e\gamma}} - \frac{\varepsilon}{2} > \chi_i^F(M_Q,\gamma) - \varepsilon.
\]
Since $\varepsilon$ is arbitrary, this completes the proof of the first part of the Proposition. The second claim is now clear if $Z_{M,\gamma} = \Spec(R)$, since in this case $\chi_i^F(M_P,\gamma) = \chi_i^F(M_P)$ for all $P \in \Spec(R)$. On the other hand, if $\chi \ne 0$, by the first part there exists $P \in \Spec(R)$ such that $\chi_i^F(M_P,\gamma) = \chi \ne 0$. It then follows from Remark \ref{remark chi and chitilde} that $\max\{\chi_i^F(M_P,\gamma) \mid P \in \Spec(R)\} = \max\{\chi_i^F(M_P,\gamma) \mid P \in Z_{M,\gamma}\}$. Using that $\chi_i^F(M_P,\gamma) = \chi_i^F(M_P)$ for $P \in Z_{M,\gamma}$, we finally conclude that 
\begin{align*}
\ds \lim_{e \to \infty} \frac{\chi_i^F(e,P_e,M)}{p^{e\gamma}} & = \max\{\chi_i^F(M_P,\gamma) \mid P \in \Spec(R)\} \\
& = \max\{\chi_i^F(M_P,\gamma) \mid P \in Z_{M,\gamma}\} \\
& =  \max\{\chi_i^F(M_P) \mid P \in Z_{M,\gamma}\}. \qedhere
\end{align*}
\end{proof}
The assumptions for the second claim in the Proposition are needed, as the following example shows.
\begin{Example} Let $S=\F_p$, and $T=\F_p(t)$, and consider the ring $R=S \times T$. Since $\chi_1(e,S \times 0,R) = -p^e$ and $\chi_1(e,0 \times T,R) = -1$, using the notation of Proposition \ref{uniform limit chi} we have $P_e=0\times T$ for all $e$. Using  $\gamma=\gamma(R)=1$, it then follows that $\lim\limits_{e \to \infty} \frac{\chi_1(e,P_e,R)}{p^e} = 0$. However, one has $\max\{\chi_1^F(R_P) \mid P \in Z_{R,1}\} = \chi_1^F(R_{S\times 0}) = \chi_1^F(\F_p(t)) = -1$. Observe that there is no contradiction with the first part of the Proposition, since $\max\{\chi_1^F(R_P,1) \mid P \in \Spec(R)\} = \chi_1^F(R_{0 \times T},1) = \chi_1^F(\F_p,1)= 0$.
\end{Example}
\begin{Theorem} \label{global betas}
Let $R$ be an F-finite ring of prime characteristic $p>0$, $M$ be a finitely generated $R$-module, and $\gamma \geq \gamma(M)$ be an integer such that $\gamma \geq \min\{1,\dim(R)\}$. For every $e \in \Z_{>0}$, fix any free resolution $(G_\bullet(e),\varphi_\bullet(e))$ of the module $F^e_*M$:
\[
\xymatrix{
\cdots \ar[r] & R^{\b_i(e)} \ar[r]^-{\varphi_i(e)} &  R^{b_{i-1}(e)} \ar[r]^-{\varphi_{i-1}(e)} & \cdots \ar[r] & R^{b_0(e)} \ar[r]^-{\varphi_0(e)} & F^e_*M \ar[r] & 0.
}
\]
For $i \in \Z_{\geq 0}$, let $\Omega_i(e) = \im(\varphi_{i}(e))$. Then:
\begin{enumerate}
\item The limit $\ds \chi_i^F(M,\gamma)=\lim\limits_{e \to \infty} \frac{\mu_R(\Omega_i(e)) + \sum_{j=1}^i (-1)^j b_{i-j}(e)}{p^{e\gamma}}$ exists. 
\item $\ds \chi_i^F(M,\gamma) = \max\{\chi_i^F(M_P,\gamma) \mid P \in \Spec(R) \}$. Moreover, if either this value is non-zero or $Z_{M,\gamma}=\Spec(R)$, then it is also equal to $\max\{\chi_i^F(M_P) \mid P \in Z_{M,\gamma}\}$.
\item Assume further that, for all $e \in \Z_{>0}$, the free resolution $G_\bullet(e)$ is chosen to be minimal. Then the limit $\ds \beta_i^F(M,\gamma) = \lim\limits_{e\to \infty}\frac{b_i(e)}{p^{e\gamma}}$ exists.
\end{enumerate}
\end{Theorem}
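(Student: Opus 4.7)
For parts (1) and (2), set
\[
\xi(e) := \mu_R(\Omega_i(e)) + \sum_{j=1}^i (-1)^j b_{i-j}(e).
\]
The aim is to show $|\xi(e) - \max_{P \in \Spec(R)} \chi_i(e,P,M)| \leq \dim(R)$, after which Proposition~\ref{uniform limit chi} immediately delivers existence of $\lim_e \xi(e)/p^{e\gamma}$ and identifies it as $\max_P \chi_i^F(M_P,\gamma)$. The crucial identity (the analogue of $\xi(e)$ at each localization) is
\[
\mu_{R_P}((\Omega_i(e))_P) + \sum_{j=1}^i (-1)^j b_{i-j}(e) = \chi_i(e,P,M),
\]
obtained by localizing the given resolution at $P$, splitting it into the short exact sequences $0 \to (\Omega_j(e))_P \to R_P^{b_{j-1}(e)} \to (\Omega_{j-1}(e))_P \to 0$, tensoring each with $\kappa(P)$, and alternately counting lengths in the resulting long exact sequences; this is precisely the bookkeeping already carried out inside the proof of Proposition~\ref{dense upper semicont}.

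Because $\mu_R(\Omega) \geq \mu_{R_P}(\Omega_P)$ for every prime $P$, the identity gives $\xi(e) \geq \max_P \chi_i(e,P,M)$. In the other direction, Forster--Swan (as used in the proof of Lemma~\ref{Schanuel}) produces a maximal ideal $\m$ with $\mu_R(\Omega_i(e)) \leq \mu_{R_\m}((\Omega_i(e))_\m) + d$, where $d = \dim(R)$, yielding $\xi(e) \leq \max_P \chi_i(e,P,M) + d$. Thus $|\xi(e) - \max_P \chi_i(e,P,M)| \leq d$, and the hypothesis $\gamma \geq \min\{1,d\}$ makes $d/p^{e\gamma} \to 0$ (if $d=0$ the error is already zero; if $d \geq 1$ then $\gamma \geq 1$). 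Proposition~\ref{uniform limit chi} then gives (1) and the first equality in (2), while the refinement involving $Z_{M,\gamma}$ is the second clause of that Proposition.

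For (3), minimality gives $\mu_R(\Omega_i(e)) = b_i(e)$, so $\xi(e) = \sum_{j=0}^i (-1)^{i-j} b_j(e)$, and I will induct on $i$. When $i = 0$, $\xi(e) = b_0(e)$ and part (1) directly supplies convergence of $b_0(e)/p^{e\gamma}$. For the inductive step, solving the alternating sum yields $b_i(e) = \xi(e) - \sum_{j=0}^{i-1} (-1)^{i-j} b_j(e)$; after division by $p^{e\gamma}$, the first term converges by (1) and the remaining terms by the induction hypothesis.

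\textbf{Main obstacle.} The only delicate point is the Forster--Swan passage from $\mu_R(\Omega_i(e))$ to the local quantities $\mu_{R_\m}((\Omega_i(e))_\m)$, which costs an unavoidable additive error equal to $\dim(R)$; the hypothesis $\gamma \geq \min\{1,\dim(R)\}$ is exactly what is needed to absorb this error after dividing by $p^{e\gamma}$. Everything else reduces to combining the local identity above with Proposition~\ref{uniform limit chi} and a short induction.
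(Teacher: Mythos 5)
Your proof follows the same approach as the paper's: establish the local identity $\mu_{R_P}((\Omega_i(e))_P) + \sum_{j=1}^i (-1)^j b_{i-j}(e) = \chi_i(e,P,M)$ by localizing the fixed resolution, sandwich the global quantity between $\max_P\chi_i(e,P,M)$ and that maximum plus $\dim(R)$ via Forster--Swan, divide by $p^{e\gamma}$, and invoke Proposition~\ref{uniform limit chi}. Your induction on $i$ for part (3) is just the unrolled form of the paper's one-line identity $\beta_i^F(M,\gamma) = \chi_i^F(M,\gamma) + \chi_{i-1}^F(M,\gamma)$; the arguments are equivalent.
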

\begin{proof} For all $P \in \Spec(R)$ and $e \in \Z_{>0}$, localizing the resolution $(G_\bullet(e),\varphi_\bullet(e))$ at $P$ gives an exact sequence:
\[
0 \longrightarrow \Omega_i(e)_P \longrightarrow R_P^{b_{i-1}(e)} \longrightarrow \dotsb\dotsb \longrightarrow R_P^{b_0(e)} \longrightarrow F^e_* (M_P) \longrightarrow 0,
\]
which gives
\[
\mu_{R_P}(\Omega_i(e)_P) + \sum_{j=1}^{i} (-1)^{j} b_{i-j}(e) = \chi_i(e,P,M).
\]
In particular, this shows that 
\[
\ds \max\{\mu_{R_P}(\Omega_i(e)_P) \mid P \in \Spec(R)\} 
+ \sum_{j=1}^{i} (-1)^{j} b_{i-j}(e)
= \max\{\chi_i(e,P,M) \mid P \in \Spec(R)\}.
\]
For all $e \in \Z_{>0}$, let $P_e$ be a prime that achieves such
maximum. Then, by the Forster-Swan Theorem \cite{Forster,Swan}, we have that
$\mu_{R_{P_e}}(\Omega_i(e)_{P_e}) \leq \mu_R(\Omega_i(e))
\leq \mu_{R_{P_e}}(\Omega_i(e)_{P_e}) + \dim(R)$. Therefore, for all
$e>0$, we have 
\[
\frac{\chi_i(e,P_e,M)}{p^{e\gamma}} 
\leq \frac{\mu_R(\Omega_i(e)) +\sum_{j=0}^i (-1)^{j} b_{i-j}(e)}{p^{e\gamma}} 
\leq \frac{\chi_i(e,P_e,M) +  \dim(R)}{p^{e\gamma}}. 
\]
Part (1) and (2) now follow from Proposition \ref{uniform limit chi}, since the difference between the two terms on the sides of the inequality goes to zero because of our assumptions on $\gamma$. Given that $\chi_i^F(M,\gamma)$ exists as a limit, for part (3) it is enough to observe that, if $G_\bullet$ is minimal, then we have
\[
\ds \beta_i^F(M,\gamma) = \chi_i^F(M,\gamma) + \chi_{i-1}^F(M,\gamma). \qedhere
\]
\end{proof}

As a consequence of Theorem~\ref{global betas}~(2), we have that $\chi_i^F(M,\gamma)=0$ for all $i \in \Z_{\geq 0}$ whenever $\gamma>\gamma(M)$. Therefore, $\beta_i^F(M,\gamma) = 0$ for $\gamma> \gamma(M)$ as well.

Unlike the case of $\chi_i^F(M,\gamma)$, we do not know whether $\beta_i^F(M,\gamma)$ coincides with the maximal value of the local invariants achieved on $\Spec(R)$. We ask it here as a question.
\begin{Question} \label{question_beta_local_max} Does the following equality hold:
\[
\ds \beta_i^F(M,\gamma) = \max\{\beta_i^F(M_P,\gamma) \mid P \in \Spec(R)\}?
\]
\end{Question}
The following example provides evidence that studying Question \ref{question_beta_local_max} may lead to some interesting consequences.
\begin{Example}
Let $Q$ be an F-finite regular ring, and let $f$ be an non-unit element of $Q$. Let $R=Q/(f)$, and assume that $Z_{R,\gamma(R)}= \Spec(R)$. By \cite[Example 3.2]{DSHNB}, for all $P \in \Spec(R)$, we have 
\begin{eqnarray*}
\beta_i^F(R_P) = \left\{
\begin{array}{ll} \eHK(R_P) & i=0 \\ \eHK(R_P)-\s(R_P) & i > 0
\end{array}
\right.
\end{eqnarray*}
where $\s(R_P)$ is the F-signature of the local ring $R_P$. Therefore
\begin{eqnarray*}
\chi_i^F(R_P) = \left\{
\begin{array}{ll} \eHK(R_P) & i \mbox{ even} \\ -\s(R_P) & i \mbox{ odd}
\end{array}
\right.
\end{eqnarray*}
By Theorem \ref{global betas} we have that
\[
\ds \chi_1^F(R) = \max\{\chi_1^F(R_P) \mid P \in \Spec(R)\} = -\min\{\s(R_P) \mid P \in \Spec(R)\},
\]
and
\[
\ds \chi_0^F(R) = \max\{\chi_0^F(R_P) \mid \in \Spec(R)\} = \max\{\eHK(R_P) \mid P \in \Spec(R)\}.
\]
Since $\beta_1^F(R) = \chi_1^F(R) + \chi_0^F(R)$, it follows that $\beta_1^F(R) = \max\{\beta_1^F(R_P) \mid P \in \Spec(R)\} = \max\{\eHK(R_P) - \s(R_P) \mid P \in \Spec(R)\}$ if and only if 
\[
\ds \{P \in \Spec(R) \mid \eHK(R_P) \mbox{ is maximal}\} \cap \{P \in \Spec(R) \mid \s(R_P) \mbox{ is minimal}\} \ne \emptyset.
\]
\end{Example} 
The following result extends \cite[Corollary 3.2]{AberbachLi} to the global setting.
\begin{Theorem} \label{THM beta regular} Let $R$ be an F-finite ring such that $Z_{R,\gamma(R)}= \Spec(R)$. Then $\beta_i^F(R) = 0$ for some (equivalently, for all) $i>0$ if and only if $R$ is regular.
\end{Theorem}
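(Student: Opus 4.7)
The plan is to reduce the statement to the local case \cite[Corollary 3.2]{AberbachLi}, which asserts that for an F-finite local ring, regularity is characterized by the vanishing of any positive Frobenius Betti number. The hypothesis $Z_{R,\gamma(R)}=\Spec(R)$ is what makes this reduction clean, since it ensures $\beta_i^F(R_P,\gamma(R))=\beta_i^F(R_P)$ for every prime $P$.

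For the ``if'' direction, I would assume $R$ is regular. Then every $R_P$ is regular, so by Kunz's theorem $F^e_*(R_P)$ is free of rank $p^{e\gamma(R_P)}=p^{e\gamma(R)}$, giving $\beta_0(e,P,R)=p^{e\gamma(R)}$ and $\beta_j(e,P,R)=0$ for $j>0$. Hence $\chi_i(e,P,R)=(-1)^i p^{e\gamma(R)}$, so $\chi_i^F(R_P)=(-1)^i$ for every $P$. Applying Theorem \ref{global betas}(2) under the standing hypothesis $Z_{R,\gamma(R)}=\Spec(R)$, we obtain $\chi_i^F(R)=(-1)^i$. Using the identity $\beta_i^F(R)=\chi_i^F(R)+\chi_{i-1}^F(R)$ for minimal resolutions, which was established in the proof of Theorem \ref{global betas}(3), I conclude $\beta_i^F(R)=0$ for every $i>0$.

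For the ``only if'' direction, I would fix $i>0$ with $\beta_i^F(R)=0$ and show each localization $R_P$ is regular. Choose, for each $e$, a minimal free resolution $(G_\bullet(e),\varphi_\bullet(e))$ of $F^e_*R$ with $b_i(e)=\mu_R(\Omega_i(e))$. Localizing at any $P\in\Spec(R)$ yields a free (possibly non-minimal) resolution of $F^e_*(R_P)$ over $R_P$, and tensoring with $\kappa(P)$ computes $\Tor$, so
\[
\beta_i(e,P,R)=\dim_{\kappa(P)}\Tor_i^R(\kappa(P),F^e_*R)\leq b_i(e)=\mu_R(\Omega_i(e)).
\]
Dividing by $p^{e\gamma(R)}$ and passing to the limit gives $\beta_i^F(R_P)=\beta_i^F(R_P,\gamma(R))\leq\beta_i^F(R)=0$, where the first equality uses $P\in Z_{R,\gamma(R)}=\Spec(R)$. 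By \cite[Corollary 3.2]{AberbachLi}, each $R_P$ is regular, so $R$ is regular. The ``equivalently, for all $i>0$'' clause then follows by combining the two directions.

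The main obstacle is that the analogue of Theorem \ref{global betas}(2) for Frobenius Betti numbers is still open (Question \ref{question_beta_local_max}); in its absence, only the one-sided comparison $\beta_i^F(R)\geq\beta_i^F(R_P)$ is available, which is fortunately enough to push vanishing from the global to the local setting. The hypothesis $Z_{R,\gamma(R)}=\Spec(R)$ is essential here to avoid the degeneracy $\beta_i^F(R_P,\gamma(R))=0$ that would render the inequality vacuous off the locus where $\gamma(R_P)=\gamma(R)$.
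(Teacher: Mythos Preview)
Your proposal is correct and follows essentially the same strategy as the paper: for the ``only if'' direction you bound $\beta_i(e,P,R)\le \mu_R(\Omega_i(e))$ by localizing a minimal resolution, then invoke \cite{AberbachLi}; for the ``if'' direction you compute $\chi_i^F(R_P)=(-1)^i$ at every prime and use Theorem~\ref{global betas}(2) together with $\beta_i^F(R)=\chi_i^F(R)+\chi_{i-1}^F(R)$. One minor notational slip: in the displayed inequality you write $\Tor_i^R(\kappa(P),F^e_*R)$, but the definition of $\beta_i(e,P,R)$ is $\lambda_{R_P}\big(\Tor_i^{R_P}(\kappa(P),F^e_*(R_P))\big)$; the two agree by flat base change, so the argument is unaffected.
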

\begin{proof} Assume that $\beta_i^F(R) = 0$ for some $i>0$. If $\Omega_i(e)$ is the $i$-th syzygy module of any minimal free resolution of $F^e_*R$, we always have $\beta_i(e,P,R) \leq \mu_{R_P}(\Omega_i(e)_P) \leq \mu_R(\Omega_i(e))$. Since $\beta_i^F(R) = \lim\limits_{e \to \infty} \frac{\mu_R(\Omega_i(e))}{p^{e\gamma(R)}} = 0$, we have $\beta_i^F(R_P) = 0$ for all $P \in \Spec(R)$. By \cite{AberbachLi}, we conclude that $R_P$ is regular for all primes $P$, hence, $R$ is regular. Conversely, if $R$ is regular, for all $P \in \Spec(R)$ we have that $\beta_i^F(R_P) = 0$ for all $i >0$, and $\eHK(R_P) = \beta_0^F(R_P) =1$. In particular, $\chi_i^F(R_P) = (-1)^i$ for all $P \in \Spec(R)$. By Theorem~\ref{global betas}, we have $\chi_i^F(R) = (-1)^i$ for all $i$, and it follows that $\beta_i^F(R) = \chi_i^F(R) + \chi_{i-1}^F(R) = 0$.
\end{proof}

Given a finitely $R$-module $M$, we let $\Assh(M,\gamma)$ denote the set of associated primes $P$ of $M$ such that $\gamma(R/P) = \gamma$. We now establish the behavior of the invariants $\chi_i^F(-,\gamma)$ under short exact sequences. As a consequence, we extend a version of associativity formula for the global Hilbert-Kunz multiplicity \cite[Corollary 3.10]{DSPY}.
\begin{Proposition} Let $R$ be an F-finite ring, and $0 \to A \to B \to C \to 0$ be a short exact sequence of finitely generated $R$-modules, and $\gamma \geq \gamma(B)$ be an integer such that $\gamma \geq \min\{1,\dim(R)\}$. For $i \in \Z_{\geq 0}$, we have
\begin{enumerate}
\item $\chi_i^F(B,\gamma) = \chi_i^F(A \oplus C,\gamma)$.
\item $\chi_i^F(B,\gamma) \leq \chi_i^F(A,\gamma)+\chi_i^F(C,\gamma)$.
\item $\ds \chi_i^F(B,\gamma) =\chi_i^F\left(\bigoplus_{P \in \Assh(B,\gamma)} \bigoplus_{i=1}^{\lambda(B_P)} R/P,\gamma\right)$.
\end{enumerate}
Analogous statements hold true for $\beta_i^F(-,\gamma)$ in place of $\chi_i^F(-,\gamma)$.
\end{Proposition}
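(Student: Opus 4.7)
The plan is to reduce everything to a local analysis at each prime $P$, leveraging Theorem \ref{global betas}(2), which identifies $\chi_i^F(M,\gamma)$ with $\max_P \chi_i^F(M_P,\gamma)$, and to exploit the long exact $\Tor$ sequence that arises from applying the exact functor $F^e_*$ to the given short exact sequence and then localizing at $P$: we obtain $0 \to F^e_*A_P \to F^e_*B_P \to F^e_*C_P \to 0$ and apply $\Tor^{R_P}_\bullet(\kappa(P),-)$. For part (2), an alternating length count in the long exact $\Tor$ sequence (truncated at position $i$) immediately gives the pointwise inequality $\chi_i(e,P,B) \leq \chi_i(e,P,A) + \chi_i(e,P,C)$ for every $P$ and $e$; choosing $P^*$ realizing $\max_P \chi_i^F(B_P,\gamma)$, dividing by $p^{e\gamma}$, passing to the limit, and invoking Theorem \ref{global betas}(2) for $A$ and $C$ separately, we obtain $\chi_i^F(B,\gamma) \leq \chi_i^F(A,\gamma) + \chi_i^F(C,\gamma)$.

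For part (1), the same long exact sequence refines to the identity
\[
\chi_i(e,P,A) + \chi_i(e,P,C) - \chi_i(e,P,B) = \lambda_{R_P}(\im \partial_{i+1}(e,P)),
\]
where $\partial_{i+1}$ is the connecting homomorphism $\Tor_{i+1}^{R_P}(\kappa(P),F^e_*C_P) \to \Tor_i^{R_P}(\kappa(P),F^e_*A_P)$. By Theorem \ref{global betas}(2) applied to $B$ and to $A \oplus C$, it suffices to establish the local additivity $\chi_i^F(B_P,\gamma) = \chi_i^F(A_P,\gamma) + \chi_i^F(C_P,\gamma)$ at every prime $P$, which reduces to showing $\lambda(\im \partial_{i+1}(e,P))/p^{e\gamma} \to 0$. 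The elementary bounds $\lambda(\im \partial_{i+1}) \leq \min\{\beta_i(e,P,A),\beta_{i+1}(e,P,C)\}$, combined with Theorem \ref{uniform convergence general}, settle the cases $\gamma > \gamma(A_P)$ or $\gamma > \gamma(C_P)$; the main obstacle is the remaining case $\gamma(A_P) = \gamma(C_P) = \gamma$, which I expect to treat by identifying $\im(\partial_{i+1})$ with a subquotient of the form $(P R_P \cdot \Omega_i^B \cap \Omega_i^A)/(P R_P \cdot \Omega_i^A)$, where $\Omega_i^A, \Omega_i^B$ are the $i$-th syzygies of $F^e_*A_P$ and $F^e_*B_P$ in a Horseshoe-type resolution of $F^e_*B_P$, and then showing this length is $o(p^{e\gamma})$.

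For part (3), fix a prime filtration $0 = B_0 \subset B_1 \subset \cdots \subset B_t = B$ with $B_j/B_{j-1} \cong R/P_j$ and apply (1) iteratively to the short exact sequences $0 \to B_{j-1} \to B_j \to R/P_j \to 0$ to obtain $\chi_i^F(B,\gamma) = \sum_j \chi_i^F(R/P_j,\gamma)$. By Theorem \ref{global betas}(2), $\chi_i^F(R/P_j,\gamma) = 0$ whenever $\gamma > \gamma(R/P_j)$, so only those $j$'s with $\gamma(R/P_j) = \gamma$ contribute; these correspond precisely to the primes in $\Assh(B,\gamma)$, and localizing at each such $P$ shows that $P$ appears exactly $\lambda(B_P)$ times among the $P_j$, yielding the stated formula. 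The analogous statements for $\beta_i^F$ follow by combining these results with the identity $\beta_i^F(M,\gamma) = \chi_i^F(M,\gamma) + \chi_{i-1}^F(M,\gamma)$ established in Theorem \ref{global betas}(3).
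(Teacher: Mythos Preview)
Your treatment of (2) and the deduction of the $\beta_i^F$ statements from the $\chi_i^F$ ones via $\beta_i^F = \chi_i^F + \chi_{i-1}^F$ are correct and in line with the paper.

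The substantive gap is in (1). The paper does not attempt to establish the local additivity
\[
\chi_i^F(B_P,\gamma) = \chi_i^F(A_P,\gamma) + \chi_i^F(C_P,\gamma)
\]
from scratch; it simply cites \cite[Proposition~1(b)]{Seibert}, which proves exactly this. Your route via the identity $\chi_i(e,P,A)+\chi_i(e,P,C)-\chi_i(e,P,B)=\lambda_{R_P}(\im\partial_{i+1}(e,P))$ is correct, and the cases $\gamma>\gamma(A_P)$ or $\gamma>\gamma(C_P)$ are indeed immediate from Theorem~\ref{uniform convergence general}. But the remaining case $\gamma(A_P)=\gamma(C_P)=\gamma$ is precisely the content of Seibert's theorem, and your sentence ``I expect to treat [it] by identifying $\im(\partial_{i+1})$ with a subquotient \dots\ and then showing this length is $o(p^{e\gamma})$'' is not a proof. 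The identification with $(P\Omega_i^B\cap\Omega_i^A)/P\Omega_i^A$ is fine, but there is no elementary reason this length should be $o(p^{e\gamma})$: both $\Omega_i^A$ and $\Omega_i^B$ have $\mu$ of order $p^{e\gamma}$, the Horseshoe resolution of $F^e_*B_P$ is typically far from minimal, and naive bounding gives nothing. Seibert's argument proceeds quite differently, by an induction on prime filtrations showing that the limits in question define additive functions; reproducing that is real work, not a remark.

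There is also a slip in (3). Iterating (1) along the filtration yields $\chi_i^F(B,\gamma)=\chi_i^F\big(\bigoplus_j R/P_j,\gamma\big)$, not the sum $\sum_j\chi_i^F(R/P_j,\gamma)$ you wrote: the global invariant is only subadditive by (2), and (1) never produces a sum of global values. To then discard the factors with $\gamma(R/P_j)<\gamma$ and arrive at the direct sum indexed by $\Assh(B,\gamma)$, one again uses local additivity together with Theorem~\ref{global betas}(2), not (1) alone. This is easily repaired once local additivity is available, but as written your (3) also rests on the unfinished step in (1).
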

\begin{proof}
We prove $(1)$. It follows from \cite[Proposition 1 (b)]{Seibert} that, for all $P \in \Spec(R)$, we have equalities $\chi_i^F(B_P,\gamma) = \chi_i^F(A_P,\gamma) + \chi_i^F(C_P,\gamma) = \chi_i^F((A\oplus C)_P,\gamma)$. Using Theorem~\ref{global betas}~(2), we conclude that
\begin{align*}
\ds \chi_i^F(B,\gamma) &= \max\{\chi_i^F(B_P,\gamma) \mid P \in \Spec(R)\} \\
& = \max\{\chi_i^F((A\oplus C)_P,\gamma) \mid P \in \Spec(R)\}  = \chi_i^F(A\oplus C,\gamma).
\end{align*}
For $(2)$, let $P \in \Spec(R)$ be such that $\chi_i^F(M,\gamma) = \chi_i^F(M_P,\gamma)$, which exists by Theorem~\ref{global betas}~(2). Using the result of Seibert mentioned above, we get
\[
\ds \chi_i^F(B,\gamma) = \chi_i^F(B_P,\gamma) =\chi_i^F(A_P,\gamma) + \chi_i^F(C_P,\gamma) \leq \chi_i^F(A,\gamma) + \chi_i^F(C,\gamma).
\]
Part $(3)$ follows from a repeated application of $(1)$, using a prime filtration of $B$.

Analogous statements for $\beta_i^F(-,\gamma)$ follow at once from the relation $\beta_i^F(-,\gamma) = \chi_i^F(-,\gamma)+\chi_{i-1}^F(-,\gamma)$.
\end{proof}

We end this section by observing that, for positively graded algebras over a local ring $(R_0,\m_0)$, the global Frobenius Betti numbers coincide with the ones in the localization at the irrelevant maximal ideal $\m=\m_0+R_{>0}$.
\begin{Proposition} \label{prop graded betti} Let $(R_0,\m_0,k)$ be an F-finite local ring and let $R$ be a positively graded algebra of finite type over $R_0$. Let $R_{>0}$ be the ideal of $R$ generated by elements of positive degree, $\m=\m_0+R_{>0}$, and $M$ be a finitely generated graded $R$-module. Let $\gamma \geq \gamma(M)$ be an integer such that $\gamma \geq \min\{1,\dim(R)\}$. For all $i \in \Z_{\geq 0}$, we have
\[
\ds \beta_i^F(M,\gamma) = \beta_i^F(M_\m,\gamma) \ \mbox{ and } \ \chi_i^F(M,\gamma) = \chi_i^F(M_\m,\gamma).
\]
\end{Proposition}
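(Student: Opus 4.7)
The plan is to reduce both identities to the following graded rigidity: for every $e > 0$, a graded minimal free resolution of $F^e_*M$ over $R$ localizes at $\m$ to a minimal free resolution of $F^e_*(M_\m)$ over $R_\m$ with the same ranks at each step. Once this is established, the identities drop out of Theorem \ref{global betas} by matching the numerators that define the two sides.

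First I would set up the graded framework. Since $M$ is a finitely generated graded $R$-module and $R$ is $\Z$-graded with unique homogeneous maximal ideal $\m = \m_0 + R_{>0}$, the module $F^e_*M$ carries a natural $\frac{1}{p^e}\Z$-grading compatible with the $R$-action. In this setting, graded Nakayama applies to any finitely generated graded $R$-module $N$, giving
\[
\mu_R(N) = \dim_k(N/\m N) = \mu_{R_\m}(N_\m),
\]
because any homogeneous $k$-basis of $N/\m N$ lifts both to a minimal set of global generators of $N$ and to a minimal set of generators of $N_\m$.

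Next, for each $e > 0$ I would fix a graded minimal free resolution
\[
\cdots \longrightarrow G_i(e) \stackrel{\varphi_i(e)}{\longrightarrow} G_{i-1}(e) \longrightarrow \cdots \longrightarrow G_0(e) \stackrel{\varphi_0(e)}{\longrightarrow} F^e_*M \longrightarrow 0,
\]
where each $G_i(e)$ is graded free of rank $b_i(e) = \mu_R(\Omega_i(e))$ and each syzygy $\Omega_i(e) := \im(\varphi_i(e))$ is graded. Localizing at $\m$ yields a free resolution of $F^e_*(M_\m)$ over $R_\m$ with the same ranks, and by graded Nakayama applied to each $\Omega_i(e)$, this localized resolution is also minimal; in particular $b_i(e) = \mu_{R_\m}((\Omega_i(e))_\m) = \beta_i(e,\m,M)$.

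To conclude, I would pass to the limit. Using this choice of minimal resolution, Theorem \ref{global betas}(3) gives
\[
\beta_i^F(M,\gamma) = \lim_{e \to \infty} \frac{b_i(e)}{p^{e\gamma}} = \lim_{e \to \infty} \frac{\beta_i(e,\m,M)}{p^{e\gamma}} = \beta_i^F(M_\m,\gamma).
\]
For the Euler characteristic, Theorem \ref{global betas}(1) expresses $\chi_i^F(M,\gamma)$ as the limit of the sequence $[\mu_R(\Omega_i(e)) + \sum_{j=1}^i (-1)^j b_{i-j}(e)]/p^{e\gamma}$, and by the previous step this numerator equals $\chi_i(e,\m,M)$, yielding $\chi_i^F(M,\gamma) = \chi_i^F(M_\m,\gamma)$. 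The main obstacle is the rigorous justification of graded Nakayama in the $\frac{1}{p^e}\Z$-graded setting produced by Frobenius pushforward, ensuring that the localization at $\m$ of a graded minimal free resolution remains minimal over $R_\m$; once this graded input is in place, the rest is routine bookkeeping with the formulas from Theorem \ref{global betas}.
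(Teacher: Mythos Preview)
Your proposal is correct and follows essentially the same route as the paper: build a graded minimal free resolution of $F^e_*M$ (using that $F^e_*M$ is $\tfrac{1}{p^e}\Z$-graded and graded Nakayama at $\m$), observe that its localization at $\m$ is a minimal free resolution of $F^e_*(M_\m)$ with the same ranks $b_i(e)$, and then read off both identities from Theorem~\ref{global betas}. The only cosmetic difference is that the paper reduces at the outset to the $\beta_i^F$ equality (since $\chi_i^F = \beta_i^F - \chi_{i-1}^F$ once one uses a minimal resolution), whereas you verify the $\chi_i^F$ equality directly from the same data.
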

\begin{proof}
It is sufficient to show the equality $\beta_i^F(M,\gamma) = \beta_i^F(M_\m,\gamma)$. Observe that $F^e_*M$ is a $\Q$-graded $R$-module. Given any finitely generated graded $R$-module $N$, the minimal number of generators of $N$ is the length of $N/\m N$, by the graded version of Nakayama's Lemma. Applying this observation to the $\Q$-graded syzygies of $F^e_*M$ for $e \in \Z_{>0}$, one can construct a graded exact sequence 
\begin{equation}
\label{eqn}
\xymatrix{
0 \ar[r] & \Omega_i(e) \ar[r] &\ds  \bigoplus_{j=1}^{b_{i-1}(e)}R[n_{i-1,j}] \ar[r]^-{\varphi_{i-1}(e)} & \cdots\cdots \ar[r] & \ds \bigoplus_{j=1}^{b_0(e)}R[n_{0,j}] \ar[r]^-{\varphi_0(e)} & F^e_*M \ar[r] & 0,
}
\end{equation}
where each syzygy $\Omega_j(e) = \im(\varphi_{j}(e))$ is graded, and $b_j(e) = \mu_R(\Omega_j(e)) = \lambda_R(\Omega_j(e)/\m\Omega_j(e))$ for all $j$. In the resolution, $R[n_{\ell,j}]$ denotes the cyclic $\Q$-graded free module with generator in degree $-n_{\ell,j} \in \Q$. In particular, this is a minimal free resolution of $F^e_*M$, and it follows from Theorem~\ref{global betas}~$(3)$ 
that $\beta_i^F(M,\gamma) = \lim\limits_{e \to \infty} \frac{b_i(e)}{p^{e\gamma}}$. 
On the other hand, since all the maps and all the modules in (\ref{eqn}) are graded minimal, after localizing at $\m$ we obtain a minimal free resolution of $F^e_*(M_\m)$:
\[
\xymatrix{
0 \ar[r] & \Omega_i(e)_\m \ar[r] & R_\m^{b_{i-1}(e)} \ar[r] & \cdots \ar[r] & R_\m^{b_0(e)} \ar[r] & F^e_*(M_\m) \ar[r] & 0.
}
\]
In particular, since $\lambda_{R_\m}(\Tor_i^R(k,F^e_*(M_\m))) = \lambda_{R}(\Omega_i(e)/\m\Omega_i(e)) = b_i(e)$, we have
\[
\ds \beta_i^F(M_\m,\gamma) = \lim_{e \to \infty} \frac{\lambda_{R_\m}(\Tor_i(k,F^e_*(M_\m)))}{p^{e\gamma}} = \lim_{e \to \infty} \frac{b_i(e)}{p^{e\gamma}} = \beta_i^F(M,\gamma). \qedhere
\]
\end{proof}
\begin{Corollary} Let $R$ and $\m$ be as in Proposition~\ref{prop graded betti}
. For all finitely generated $R$-modules $M$, we have $\eHK(M) = \eHK(M_\m)$ .
\end{Corollary}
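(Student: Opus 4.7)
The corollary follows directly from Proposition~\ref{prop graded betti}. The plan is: by Remark~\ref{Remark independence beta}, the global Hilbert-Kunz multiplicity is exactly the $0$-th Frobenius Betti number normalized by $p^{e\gamma(R)}$, i.e.\ $\eHK(M) = \beta_0^F(M, \gamma(R))$; the same identification reads $\eHK(M_\m) = \beta_0^F(M_\m, \gamma(R_\m))$ for the local ring $R_\m$. Applying Proposition~\ref{prop graded betti} with $i = 0$ and $\gamma = \gamma(R)$ (whose hypotheses $\gamma(R) \geq \gamma(M)$ and $\gamma(R) \geq \min\{1, \dim R\}$ are standard for F-finite rings, since $\alpha(-) \leq \gamma(R)$ on all of $\Spec(R)$, and an F-finite ring of positive dimension has a minimal prime with residue field of positive transcendence degree over $\F_p$) yields $\beta_0^F(M, \gamma(R)) = \beta_0^F(M_\m, \gamma(R))$.

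It remains to reconcile the normalizations $p^{e\gamma(R)}$ versus $p^{e\gamma(R_\m)}$. For this, one checks that $\gamma(R) = \gamma(R_\m)$: in the graded setting, every minimal prime of $R$ is $\Z_{\geq 0}$-homogeneous and thus contained in the unique graded maximal ideal $\m$; combined with the observation that $\alpha(-)$ attains its maximum on minimal primes (since $\alpha$ is non-increasing along specialization in an F-finite ring), this shows that $\gamma(R_\m) = \max\{\alpha(P) \mid P \subseteq \m\} = \gamma(R)$. Chaining the equalities then gives
\[
\eHK(M) = \beta_0^F(M, \gamma(R)) = \beta_0^F(M_\m, \gamma(R)) = \beta_0^F(M_\m, \gamma(R_\m)) = \eHK(M_\m).
\]
There is no real obstacle beyond this normalization bookkeeping, since the substantive content has already been established in Proposition~\ref{prop graded betti}; the corollary is essentially a translation of the $i = 0$ case of that proposition into Hilbert-Kunz language.
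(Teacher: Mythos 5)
Your proof is correct and follows the same route as the paper: identify $\eHK(-)$ with $\beta_0^F(-,\gamma(R))$, invoke Proposition~\ref{prop graded betti} at $i=0$, and then reconcile the normalization via $\gamma(R)=\gamma(R_\m)$. The paper states $\gamma(R)=\gamma(R_\m)$ without justification, whereas you spell out a correct argument (minimal primes are graded, hence lie in $\m$, and $\alpha$ is maximized on minimal primes by Kunz's formula $\alpha(P)=\alpha(Q)+\dim(R_Q/PR_Q)$ for $P\subseteq Q$); the only loose phrasing is calling $\alpha$ a ``transcendence degree'' — it is the degree of imperfection $\log_p[\kappa(P):\kappa(P)^p]$ — but the conclusion you draw from it is sound.
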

\begin{proof}
By Proposition \ref{prop graded betti}, we have $\eHK(M) = \beta_0^F(M,\gamma(R)) = \beta_0^F(M_\m,\gamma(R))$. Since $\gamma(R) = \gamma(R_\m)$, we get $\eHK(M)  = \beta_0^F(M_\m,\gamma(R_\m)) = \eHK(M_\m)$.
\end{proof}
\section{Background on F-splitting ratio of local rings} Let $(R,\m,k)$ be an F-finite local ring of prime characteristic $p>0$. Aberbach and Enescu introduced the concepts of splitting prime and F-splitting ratio of a local F-finite ring in \cite{AberbachEnescu2005}. Assume that $R$ is F-pure, that is, the Frobenius map is pure as a map of rings. In our assumptions, this is the same as requiring that $R$ is F-split \cite[Corollary~5.3]{HochsterRoberts1976}. For a finitely generated $R$-module $M$, we let $\frk_R(M)$ be the maximal rank of a free summand of $M$. Equivalently, $\frk_R(M)$ is the maximal rank of a free module $G$ for which there is a surjection $M \to G \to 0$. For all $e \in \Z_{>0}$, we let $a_e(R) = \frk_R(F^e_*R)$ be the {\it $e$-th splitting number of $R$}. The {\it splitting dimension of $R$} is
\[
\sdim(R):=\sup\left\{\ell \in \Z_{\geq 0} \ \bigg| \ \liminf_{e\to \infty }\frac{a_e(R)}{p^{e(\ell+\alpha(\m))}}>0\right\}.
\]
The {\it F-splitting ratio of $R$} is defined to be the limit
\[
r_F(R):=\lim_{e\to \infty}\frac{a_e(R)}{p^{e(\sdim(R)+\alpha(\m))}},
\]
which always exists \cite[Theorem~4.9]{Tucker2012} and is always positive for F-pure rings by work of Blickle, Schwede, and Tucker \cite[Corollary~4.3]{BST2012}. 
\begin{Remark} 
Observe that, when $\sdim(R) = \dim(R)$, the F-splitting ratio is equal to the F-signature of $R$. 
\end{Remark}

Continue to let $(R,\m,k)$ denote an F-finite and F-pure local ring of prime characteristic $p>0$. For each $e\in\Z_{>0}$ let $I_e=\{r\in R\mid R\xrightarrow{\cdot F^e_*(r)}F^e_*R\mbox{ is not pure}\}$ be the $e$-th splitting ideal of $R$. Aberbach and Enescu show in \cite{AberbachEnescu2005} that $\PP:=\bigcap_{e\in \Z_{>0}}I_e$ is a prime ideal of $R$ and $R/\PP$ is a strongly F-regular local ring. The ideal $\PP$ is called the splitting prime of the local ring $R$. Moreover, it is shown in \cite{BST2012} that the splitting dimension of $R$ is the Krull dimension of the local ring $R/\PP$.

We recall that a graded $\F_p$-subalgebra $\D$ of $\bigoplus_{e \in \Z_{\geq 0}} \Hom_R(F^e_*R,R)$, with $\D_0=\Hom_R(R,R)$ and multiplication $\varphi \bullet \psi = \varphi \circ F^e_*\psi \in \D_{e+e'}$ for all $\varphi \in \D_e$ and $\psi \in \D_{e'}$, is called a Cartier algebra. If $\D_e = \Hom_R(F^e_*,R)$ for all $e$, we refer to $\D$ as the full Cartier algebra on $R$. See \cite{BST2012} for more details on Cartier algebras.

If $I\subseteq R$ is an ideal, then we let $\D_{R/I}$ be the Cartier algebra on $R/I$ whose $e$-th graded component is denoted by $\D_{R/I,e}$ and consists of $R/I$-linear maps $\varphi: F^e_*(R/I)\to R/I$ which can be factored through an $R$-linear map $\phi: F^e_*R\to R$. That is, there exists commutative diagram of $R$-modules of the form
\[
\xymatrix{
F^e_*(R/I) \ar[r]^\varphi & R/I \\ 
F^e_*R \ar[u] \ar@{-->}^-{\exists \phi}[r] & R \ar[u]
}
\]
Observe that the construction of this Cartier algebra did not require $R$ to be local. Moreover, if $P$ is a prime ideal of $R$ which contains $I$, then the localized Cartier algebra $(\D_{R/I})_P$ agrees with $\D_{R_P/IR_P}$. 

We now recall the definition of splitting numbers of a pair $(R,\D)$ in the local case. Let $(R,\m,k)$ be a local F-finite and F-pure ring of prime characteristic $p>0$, and $\D$ be a Cartier algebra. We let $a_e(R,\D)$ be the largest rank of a free $\D$-summand of $F^e_*R$. More explicitly, we look at the largest rank of a free $R$-module $G \cong \bigoplus R$ for which there is a surjection $F^e_*R \xrightarrow{\varphi} G \to 0$, with $\varphi$ that is a direct sum of elements in $\D_e$ when viewed as an element of $\Hom_R(F^e_*R,G) \cong \bigoplus \Hom_R(F^e_*R,R)$. It was proved in \cite{BST2012} that, if $\D$ is the full Cartier algebra on $R$, and $\PP$ is the splitting prime of $R$, one has
\[
a_e(R)=a_e(R/\PP, \D_{R/\PP}).
\]
We record the following theorem of Blickle, Schwede, and Tucker for future reference.

\begin{Theorem}[\cite{BST2012}]\label{Splitting ratio is F-signature of a Cartier algebra} Let $(R,\m,k)$ be a local F-finite and F-pure ring of prime characteristic $p>0$. Let $\D$ be the full Cartier algebra on $R$, and $\mathcal{P}$ be the splitting prime of $R$. Then $a_e(R) = a_e(R/\PP,\D_{R/\PP})$ for all $e \in \Z_{>0}$, and thus $r_F(R) = \s(R/\PP,\D_{R/\PP}) = r_F(R/\PP,\D_{R/\PP})$. In particular, the F-splitting ratio of $R$ is strictly positive.
\end{Theorem}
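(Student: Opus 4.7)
The plan is to reduce everything to establishing the pointwise equality $a_e(R) = a_e(R/\PP, \D_{R/\PP})$ for every $e \in \Z_{>0}$; the claims about $r_F(R)$ then follow by dividing by $p^{e(\sdim(R)+\alpha(\m))}$, passing to the limit, and invoking the strong F-regularity of $R/\PP$ proved in \cite{AberbachEnescu2005}.

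The first step, which I expect to be the main technical obstacle, is to verify that every $\varphi \in \Hom_R(F^e_*R,R)$ sends $F^e_*\PP$ into $\PP$. For this I would start with $r \in \PP$ and an arbitrary $\psi \in \Hom_R(F^{e'}_*R,R)$, and chase the Cartier composition $\psi \bullet \varphi = \psi \circ F^{e'}_*\varphi \in \Hom_R(F^{e+e'}_*R, R)$ evaluated on an appropriate Frobenius-pushed element of $r$, deducing from $r \in I_{e+e'}$ that $\psi(F^{e'}_*\varphi(F^e_*r)) \in \m$. Since $\psi$ was arbitrary this forces $\varphi(F^e_*r) \in I_{e'}$ for every $e'$, hence $\varphi(F^e_*r) \in \PP$. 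A consequence is that every $\varphi$ descends to an element of $\D_{R/\PP, e}$, yielding a graded surjection $\bigoplus_e \Hom_R(F^e_*R, R) \twoheadrightarrow \D_{R/\PP}$ of $\F_p$-algebras.

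With the descent in place, the inequality $a_e(R) \leq a_e(R/\PP, \D_{R/\PP})$ is straightforward: given a splitting of $F^e_*R$ with dual basis pairs $(\varphi_i, F^e_*r_i)$ satisfying $\varphi_i(F^e_*r_j) = \delta_{ij}$, reduction modulo $\PP$ gives a $\D_{R/\PP}$-splitting of $F^e_*(R/\PP)$ of the same rank (the matrix $(\bar\varphi_i(F^e_*\bar r_j))$ is still the identity over $R/\PP$). Conversely, given a $\D_{R/\PP}$-splitting of rank $n$, I would lift the $\bar\varphi_i$'s to maps $\varphi_i \in \Hom_R(F^e_*R,R)$, which is possible by the very definition of $\D_{R/\PP}$, and the generators $\bar r_i$ to elements $r_i \in R$; then the matrix $(\varphi_i(F^e_*r_j))$ reduces to the identity modulo $\PP \subseteq \m$, so by Nakayama's lemma the resulting map $F^e_*R \to R^n$ is surjective, giving a rank-$n$ free summand of $F^e_*R$.

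Having $a_e(R) = a_e(R/\PP, \D_{R/\PP})$ for every $e$, the identity $\sdim(R) = \dim(R/\PP)$ from \cite{AberbachEnescu2005,BST2012} makes the normalization $p^{e(\sdim(R)+\alpha(\m))}$ used to define $r_F(R)$ coincide with the F-signature normalization for the pair $(R/\PP, \D_{R/\PP})$. Taking the limit yields the three-way equality $r_F(R) = \s(R/\PP, \D_{R/\PP}) = r_F(R/\PP, \D_{R/\PP})$, where the last equality holds because the splitting dimension of the pair $(R/\PP, \D_{R/\PP})$ agrees with $\dim(R/\PP)$. Finally, positivity of $\s(R/\PP, \D_{R/\PP})$, and hence of $r_F(R)$, follows from the strong F-regularity of the pair $(R/\PP, \D_{R/\PP})$ via the results on F-signature of pairs in \cite{BST2012}.
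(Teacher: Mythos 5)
The paper states this theorem as a citation to \cite{BST2012} and does not supply its own proof, so there is no internal argument to compare against; your proposal reconstructs the standard argument from that reference, and it is correct. The key steps all check out: the F-compatibility of $\PP$ (for $r \in \PP \subseteq I_{e+e'}$ and arbitrary $\psi$, the composite $\psi \circ F^{e'}_*\varphi \in \Hom_R(F^{e+e'}_*R,R)$ evaluated at $F^{e+e'}_*r$ lands in $\m$, which forces $\varphi(F^e_*r) \in I_{e'}$ for all $e'$, hence in $\PP$); the descent of a dual-basis pair $(\varphi_i, F^e_*r_i)$ to a $\D_{R/\PP}$-splitting of the same rank; the lift in the opposite direction with surjectivity via Nakayama since the matrix $(\varphi_i(F^e_*r_j))$ is congruent to the identity modulo $\PP \subseteq \m$; and the identification of normalizations $\sdim(R)+\alpha(\m) = \dim(R/\PP)+\alpha(\m) = \gamma(R/\PP)$ so that dividing and passing to the limit gives all three equalities, with positivity coming from strong F-regularity of the pair $(R/\PP, \D_{R/\PP})$.
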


\section{Global F-splitting Ratio} \label{Section global r_F}
Let $R$ be an F-finite ring of prime characteristic $p>0$, but not necessarily local and let $M$ be a finitely generated $R$-module. For $e \in \Z_{>0}$ we let $a_e(M) = \frk_R(F^e_*M)$, and assume that $a_e(M)>0$ for some $e$.  Under these assumptions, we define the {\it F-splitting rate of $M$} to be
\[
\sr(M):=\sup\left\{\ell \in \Z_{\geq 0} \ \bigg| \ \liminf_{e\to \infty }\frac{a_e(M)}{p^{e\ell}}>0\right\}.
\]
If $(R,\m,k)$ is local, then $\sr(R)=\sdim(R)+\alpha(\m)$. Moreover, if $\PP$ the splitting prime of $(R,\m,k)$, then $\sr(R)=\gamma(R/\PP)$. We define the {\it global F-splitting ratio of $M$} to be
\[
r_F(M)=\lim_{e\to \infty }\frac{a_e(M)}{p^{e\sr(M)}},
\]
provided the limit exists. When $a_e(M)=0$ for all $e\in \Z_{>0}$ we set $\sr(M)=-1$ and $r_F(M)=0$. The main purpose of this section is to prove existence of the global F-splitting ratio of a finitely generated module $M$. To do so we first must better understand the global behavior of the numbers $a_e(R_P)$, as $P$ varies in $\Spec(R)$. We then develop a local theory of F-splitting ratio of finitely generated $R$-modules. We then can use the global theory of the splitting numbers $a_e(R_P)$ to understand the global theory of the splitting numbers $a_e(M_P)$ and then we invoke results of \cite{DSPY2} to prove the existence of global F-splitting ratio of a finitely generated module.

In order to develop the theory of splitting ratios we must first discuss and understand properties of centers of F-purity, i.e., compatibly split subvarieties, whose properties are developed by Schwede in \cite{Schwede2009} and \cite{Schwede2010}, and by Kumar and Mehta in \cite{KumarMehta}.

Let $R$ be an F-finite and F-pure ring of prime characteristic $p>0$. A prime ideal $P\in \Spec(R)$ is called a center of F-purity if for every $x\in P$ and every $e \in \Z_{>0}$ the map 
\[
R_P\xrightarrow{\cdot F^e_*x}F^e_*(R_P)
\] 
is not pure as a map of $R_P$-modules. If $R$ is local and $\PP$ the splitting prime of $R$ then $\PP$ is the unique maximal center of F-purity of $R$, \cite[Remark~4.4]{Schwede2010}. An important property enjoyed by all F-pure rings is that they only admit finitely many centers of F-purity.

\begin{Theorem}[{\cite[Theorem~C]{Schwede2009}}, {\cite[Theorem~1.1]{KumarMehta}}]\label{Finitely many centers of F-purity} Let $R$ be an F-finite and F-pure ring of prime characteristic $p>0$. Then $R$ admits only finitely many centers of F-purity.
\end{Theorem}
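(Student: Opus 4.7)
The plan is to fix a single Frobenius splitting of $R$ and reduce the problem to finiteness of ideals compatible with it. Since $R$ is F-pure and F-finite, there exist $e \geq 1$ and a surjective $R$-linear map $\phi\colon F^e_*R \to R$ with $\phi(F^e_*1)=1$. I would call an ideal $I\subseteq R$ $\phi$-compatible if $\phi(F^e_*I)\subseteq I$, which is the same as saying that $\phi$ descends to a Frobenius splitting of the quotient $R/I$. Routine verifications show that this class is closed under finite sums and intersections, and that the minimal primes of any $\phi$-compatible ideal are again $\phi$-compatible.

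Next I would show that every center of F-purity $P$ is $\phi$-compatible, so that finiteness of the former follows from finiteness of the latter. Indeed, if $\phi(F^e_*P)\not\subseteq P$ there would be $x\in P$ with $u:=\phi(F^e_*x)\notin P$, hence a unit in $R_P$; then $u^{-1}\cdot\phi_P$ would split the map $R_P\xrightarrow{\cdot F^e_*x}F^e_*R_P$, contradicting the definition of center of F-purity. The same argument applies to every element of $\Hom_R(F^e_*R,R)$, so each center of F-purity is compatible with every $R$-linear map out of $F^e_*R$, not merely with the chosen splitting.

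It then remains to bound the $\phi$-compatible prime ideals of $R$, which I would attempt by Noetherian induction on $\dim R$. Since $R$ is F-pure it is reduced, so the finitely many minimal primes $Q_1,\dots,Q_s$ of $R$ are automatically $\phi$-compatible, and each quotient $R/Q_j$ is an F-pure domain equipped with an induced Frobenius splitting. The $\phi$-compatible primes of $R$ containing $Q_j$ correspond to compatible primes of $R/Q_j$, and iterating this dimension-reduction reduces to the base case $\dim = 0$, where the primes are all maximal and finite in number.

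The main obstacle is making the dimension reduction in the last step rigorous: one must verify that every $\phi$-compatible prime of $R/Q_j$ of height zero can be handled before strictly decreasing dimension, and that no infinite family of compatible primes accumulates at any fixed height. An elegant way to bypass this bookkeeping, used by Kumar--Mehta \cite{KumarMehta}, is to exploit the duality $\Hom_R(F^e_*R,R)\cong F^e_*R$ and represent $\phi$ by a single element $u\in F^e_*R$; the $\phi$-compatibility of an ideal $I$ then becomes the purely algebraic condition $u\in(I^{[p^e]}\colon I)$, and finiteness can be extracted directly from Noetherianity applied to the resulting chain of colon ideals.
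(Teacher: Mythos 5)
The paper does not give a proof of this theorem; it is quoted from Schwede and from Kumar--Mehta, so there is no internal argument to compare against. Your reduction, on the other hand, starts correctly and in fact aligns with the viewpoint used in both of those references: fix one splitting $\phi\colon F^e_*R\to R$, and then (i) prove that every center of F-purity is compatible with every element of $\Hom_R(F^e_*R,R)$, hence in particular with $\phi$ --- your local-unit argument for this is correct and is exactly the observation that centers of F-purity are \emph{uniformly F-compatible} in Schwede's sense; and (ii) reduce the theorem to finiteness of the $\phi$-compatible primes. That reduction is sound.

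The gap is precisely where you flag it, and it is not a mere bookkeeping issue --- it is the entire content of the theorem. Passing to $R/Q_j$ does not lower the dimension of the problem: $R/Q_j$ is a domain of the same dimension as $V(Q_j)$, and the question becomes whether it has only finitely many compatibly split primes, possibly of any height. The natural minimal object in $R/Q_j$ is the zero ideal, which is trivially compatible, and nothing in the argument rules out infinitely many compatible height-one primes. So the Noetherian induction as written does not terminate, and simply invoking that locally closed subsets of a Noetherian space satisfy the descending chain condition will not close it. The missing ingredient is a genuine structural lemma: roughly, that the union of all compatibly split prime divisors is itself compatibly split and that every proper compatibly split irreducible subvariety is contained in such a divisor, which then lets dimension drop by exactly one at each stage. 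That lemma is where both Schwede's proof (via F-adjunction and the correspondence with $\Q$-divisors) and Kumar--Mehta's proof do real work.

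The duality-based repair you sketch at the end also needs care. The isomorphism $\Hom_R(F^e_*R,R)\cong F^e_*R$ as $F^e_*R$-modules holds when $R$ is Gorenstein (or is replaced by an ambient regular ring via Fedder's criterion); for a general F-finite F-pure ring the correct target is $F^e_*\omega_R^{(1-p^e)}$, and one has to either assume Gorenstein, localize, or pass to a presentation $R = S/I$ with $S$ regular. And even granting the reformulation ``$\phi$ compatible with $I$ iff $u\in (I^{[p^e]}:I)$,'' the phrase ``finiteness can be extracted directly from Noetherianity applied to the resulting chain of colon ideals'' is not yet an argument: there is no single chain to apply the ACC to, and the actual finiteness in Kumar--Mehta requires the dimension-dropping lemma above rather than a one-line Noetherian observation. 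In short: the setup and the compatibility claims are right, but the core finiteness step is missing, and what you label an ``elegant bypass'' is the hard part of the theorem rather than a shortcut around it.
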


Also crucial to our proof of existence of global F-splitting ratio will be that Cartier algebras of the form $\D_{R/I}$ described above satisfy the following technical condition.

\begin{Condition} \label{condition *} Let $R$ be an F-finite ring and $\D$ a Cartier algebra. We say that $\D$ satisfies condition $(*)$ if we require that for each $\varphi\in \D_{e+1}$ that the natural map $i\circ \varphi\in \D_{e}$ where $i:F^e_*R\to F^{e+1}_*R$ is the Frobenius. 
\end{Condition}

\begin{Lemma}\label{Class of Cartier algebras satisfying condition (*)} Let $R$ be an F-finite ring of prime characteristic $p>0$ and $I\subseteq R$ be an ideal. Assume that the Cartier algebra $\D$ on $R$ satisfies $(*)$. Then the Cartier algebra $\D_{R/I}$ on $R/I$ satisfies condition $(*)$ as well.
\end{Lemma}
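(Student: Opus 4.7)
The plan is to reduce the statement to a small diagram chase. I would begin by fixing an arbitrary $\bar\varphi \in \D_{R/I,e+1}$; by the definition of $\D_{R/I}$, there exists a lift $\phi \in \D_{e+1}$ with the property that the square whose horizontal arrows are $\phi \colon F^{e+1}_*R \to R$ and $\bar\varphi \colon F^{e+1}_*(R/I) \to R/I$, and whose vertical arrows are the canonical surjections $F^{e+1}_*R \twoheadrightarrow F^{e+1}_*(R/I)$ and $R \twoheadrightarrow R/I$, commutes.

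Next I would exploit the naturality of the Frobenius. Writing $i \colon F^e_*R \to F^{e+1}_*R$ and $\bar\imath \colon F^e_*(R/I) \to F^{e+1}_*(R/I)$ for the relevant Frobenius maps, functoriality of Frobenius in ring homomorphisms yields a commutative square with $i,\bar\imath$ as horizontal arrows and the natural surjections as vertical arrows. Pasting this square to the left of the lifting square from the previous step produces a commutative rectangle in which the top row composes to $\phi \circ i \colon F^e_*R \to R$, the bottom row composes to $\bar\varphi \circ \bar\imath \colon F^e_*(R/I) \to R/I$, and the vertical arrows are the canonical surjections. Hence $\phi \circ i$ provides a lift of $\bar\varphi \circ \bar\imath$ in the sense required by the definition of $\D_{R/I}$.

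Finally, the hypothesis that $\D$ satisfies $(*)$ gives $\phi \circ i \in \D_e$, and by the definition of $\D_{R/I,e}$ we conclude $\bar\varphi \circ \bar\imath \in \D_{R/I,e}$, which is exactly condition $(*)$ for $\D_{R/I}$. The main, and rather minor, obstacle is simply bookkeeping the direction of composition: Condition~\ref{condition *} is most sensibly read as the pre-composition of $\varphi \in \D_{e+1}$ with the Frobenius $i \colon F^e_*R \to F^{e+1}_*R$, producing a map $\varphi \circ i \colon F^e_*R \to R$. Once that is sorted out, both the diagram chase and the appeal to $(*)$ are purely formal.
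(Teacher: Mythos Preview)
Your proof is correct and follows exactly the same diagram chase as the paper: lift $\bar\varphi$ to $\phi$, paste the naturality square for Frobenius on the left, and read off that the composite lifts $\bar\varphi\circ\bar\imath$. One minor note: in the paper's definition of $\D_{R/I}$ the lift $\phi$ is only required to be an arbitrary $R$-linear map $F^{e+1}_*R\to R$ rather than an element of $\D_{e+1}$, so the paper's proof never actually invokes the hypothesis that $\D$ satisfies $(*)$---but this does not affect the validity of your argument, which works verbatim under either reading.
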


\begin{proof}
Let $\varphi \in \D_{R/I,e+1}$, and $i:F^e_*(R/I)\to F^{e+1}_*(R/I)$  be the Frobenius map on $F^e_*(R/I)$. We are assuming there exists a commutative diagram of $R$-modules of the form
\[
\xymatrix{
F^{e}_*(R/I)\ar[r]^-i &F^{e+1}_*(R/I)\ar[r]^-\varphi &R/I\\
F^{e}_*R \ar[u] \ar@{-->}[r] & F^{e+1}_*R \ar[u] \ar[r]^-\phi & R \ar[u]
}
\]
The Frobenius map on $F^e_*(R/I)$ can be lifted by the Frobenius map on $F^e_*R$. Therefore the above commutative diagram can be filled in, and it follows that $\varphi\circ i\in \D_{R/I,e}$.
\end{proof}

We are almost ready to prove a uniform bound result for the localized splitting numbers $a_e(R_P)$ of an F-finite ring $R$, but first we recall a uniform bound result of \cite{Polstra2018}. We use the following notation: as in Section \ref{Background}, given a prime $P \in \Spec(R)$ we let $\alpha(P) = \log_p[F_*\kappa(P):\kappa(P)]$ and $\gamma(R) = \max\{\alpha(P) \mid P \in \Spec(R)\}$. Moreover, given a pair $(R,\D)$, $P \in \Spec(R)$ and $e \in \Z_{>0}$, we let $a_e(R_P,\D_P)$ be the maximal rank of a free $\D_P$-summand of $F^e_*(R_P)$. In the case when $\D = \Hom_R(F^e_*R,R)$ is the full Cartier algebra, we simply write $a_e(R_P)$, which is also equal to $\frk_{R_P}(F^e_*R_P)$.

\begin{Theorem}[{\cite{Polstra2018}}]\label{Uniform convergence for Cartier algebras satisfying condition (*)} Let $R$ be an F-finite ring, and $\D$ be a Cartier algebra satisfying condition $(*)$. There exists a constant $C$ such that for all $P\in \Spec(R)$ and all $e\in \Z_{>0}$
\[
\left|a_e(R_P,\D_P)-p^{e\gamma(R_P)}\s(R_P,\D_P)\right|\leq Cp^{e(\gamma(R)-1)}.
\]
\end{Theorem}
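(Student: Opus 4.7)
The plan is to establish a uniform Cauchy-type estimate on $\{a_e(R_P, \D_P)/p^{e\gamma(R_P)}\}_{e}$ across all $P \in \Spec(R)$, from which the displayed bound follows on letting $e_1 \to \infty$ in the Cauchy estimate.

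First I would compare $a_{e_1+e_2}(R_P, \D_P)$ with $p^{e_1\gamma(R_P)} a_{e_2}(R_P, \D_P)$. The key input is Lemma~\ref{Dutta's Lemma}: after a standard covering argument over the minimal primes of $R$, one obtains (uniformly in $P$) an inclusion $R_P^{\oplus p^{e_1\gamma(R_P)}} \hookrightarrow F^{e_1}_*R_P$ whose cokernel admits a prime filtration by cyclic modules $R_P/\q R_P$, with $\q$ ranging over a finite set $\mathcal{S}$ of non-minimal primes and each such quotient appearing at most $C p^{e_1\gamma(R_P)}$ times, for a constant $C$ independent of $P$. Crucially, every $\q \in \mathcal{S}$ satisfies $\gamma(R/\q) \leq \gamma(R) - 1$.

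Next, applying the exact functor $F^{e_2}_*(-)$ gives an inclusion $\bigl(F^{e_2}_*R_P\bigr)^{\oplus p^{e_1\gamma(R_P)}} \hookrightarrow F^{e_1+e_2}_*R_P$ whose cokernel is filtered by modules $F^{e_2}_*(R_P/\q R_P)$. Condition $(*)$ plays its essential role here: it ensures that a $\D_{e_1+e_2}$-compatible surjection $F^{e_1+e_2}_*R_P \twoheadrightarrow G$ onto a free $R_P$-module can be analyzed by restriction to the submodule and by examining the induced splittings of the filtration pieces via $\D_{R_P/\q R_P}$, which is itself a Cartier algebra satisfying $(*)$ thanks to Lemma~\ref{Class of Cartier algebras satisfying condition (*)}. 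This would yield an estimate of the form
\[
\bigl|a_{e_1+e_2}(R_P, \D_P) - p^{e_1\gamma(R_P)} a_{e_2}(R_P, \D_P)\bigr| \ \leq \ C\,|\mathcal{S}|\,p^{e_1\gamma(R_P)} \max_{\q \in \mathcal{S}} a_{e_2}\bigl(R_P/\q R_P,\, (\D_{R/\q})_P\bigr).
\]
Since each term on the right is trivially bounded by $\rank_{R_P/\q R_P}\!\bigl(F^{e_2}_*(R_P/\q R_P)\bigr) \leq D\, p^{e_2\gamma(R/\q)} \leq D\, p^{e_2(\gamma(R)-1)}$ for a uniform $D$, dividing by $p^{(e_1+e_2)\gamma(R_P)}$ yields
\[
\left|\frac{a_{e_1+e_2}(R_P, \D_P)}{p^{(e_1+e_2)\gamma(R_P)}} - \frac{a_{e_2}(R_P, \D_P)}{p^{e_2\gamma(R_P)}}\right| \ \leq \ \frac{C'}{p^{e_2}},
\]
uniformly in $P$. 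This is a uniform Cauchy condition; letting $e_1 \to \infty$ then proves existence of $\s(R_P, \D_P)$ and the claimed uniform bound after multiplying back through by $p^{e\gamma(R_P)} \leq p^{e\gamma(R)}$.

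The principal obstacle is the middle step: carefully invoking condition $(*)$ to justify the approximate additivity of the $\D$-splitting numbers along the filtration. Without $(*)$, there is no mechanism to guarantee that restrictions or extensions of $\D$-compatible maps remain in the prescribed graded components of the Cartier algebra as one passes between levels $e_1$, $e_2$, and $e_1+e_2$ of the Frobenius, and the whole counting argument collapses. Lemma~\ref{Class of Cartier algebras satisfying condition (*)} is essential in this step, as it propagates condition $(*)$ to the quotient Cartier algebras $\D_{R/\q}$, which is needed to make the recursive bookkeeping rigorous.
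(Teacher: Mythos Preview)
The paper does not prove this theorem: it is quoted from \cite{Polstra2018} and stated without argument, so there is no in-paper proof to compare your proposal against. Lemma~\ref{Class of Cartier algebras satisfying condition (*)}, which you invoke, is used in the paper only later, in the proof of Theorem~\ref{Main Theorem about global F-splitting ratio}, where one passes to quotients by centers of $F$-purity; it plays no role in establishing Theorem~\ref{Uniform convergence for Cartier algebras satisfying condition (*)} itself.

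On the substance of your outline: the global strategy (a uniform Cauchy estimate obtained from Dutta-type filtrations, then letting one index tend to infinity) is indeed the shape of the argument in \cite{Polstra2018}. The gap is exactly where you locate it, and it is a real one as written. Condition~$(*)$ says that precomposing $\varphi\in\D_{e+1}$ with the \emph{Frobenius} inclusion $F^e_*R\hookrightarrow F^{e+1}_*R$ yields an element of $\D_e$. It says nothing about restriction along the arbitrary $R$-linear inclusion $R_P^{\oplus p^{e_1\gamma(R_P)}}\hookrightarrow F^{e_1}_*R_P$ furnished by Lemma~\ref{Dutta's Lemma}, nor about extending $\D_{e_2}$-maps from the submodule $(F^{e_2}_*R_P)^{\oplus p^{e_1\gamma(R_P)}}$ to $\D_{e_1+e_2}$-maps on $F^{e_1+e_2}_*R_P$. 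Unlike $\Tor$-lengths, the numbers $a_e(-,\D)$ are not additive (or even subadditive in any obvious way) along short exact sequences, so the step producing your displayed bound in terms of $a_{e_2}(R_P/\q R_P,(\D_{R/\q})_P)$ is unjustified; and since you immediately bound that quantity by the rank of $F^{e_2}_*(R_P/\q R_P)$, your appeal to Lemma~\ref{Class of Cartier algebras satisfying condition (*)} is in any case superfluous.

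In \cite{Polstra2018} the argument is organized around the $\D$-splitting ideals $I_e^{\D}(R_P)=\{r\in R_P : \varphi(F^e_*r)\in PR_P \text{ for all }\varphi\in\D_{P,e}\}$. Condition~$(*)$ is used to obtain the containment $(I_e^{\D})^{[p]}\subseteq I_{e+1}^{\D}$ (if $\varphi\in\D_{e+1}$ sends $F^{e+1}_*r^p$ to a unit, then $\varphi\circ i\in\D_e$ sends $F^e_*r$ to a unit), and the Cartier algebra multiplication together with the Dutta-type filtration controls the opposite direction on the level of lengths $\lambda(R_P/I_e^{\D})$. This is what makes the two-sided estimate go through uniformly in $P$.
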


Using this, we obtain uniform bounds for the difference of localized splitting numbers of an F-finite F-pure ring and the corresponding F-splitting ratios. We will obtain more general results in Theorem~\ref{global r(M)}.
\begin{Theorem}\label{Main Theorem about global F-splitting ratio}
Let $R$ be an F-finite ring and F-pure ring. There is a constant $C\in \R$ such that for all $P\in\Spec(R)$ and $e\in \Z_{>0}$
\[
\left|a_e(R_P)-p^{e\sr(R_P)}r_F(R_P)\right|\leq Cp^{e(\sr(R_P)-1)}. 
\]
\end{Theorem}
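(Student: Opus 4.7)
The plan is to reduce the global statement to a uniform convergence statement for a finite collection of Cartier algebras. The three main ingredients are: the Blickle--Schwede--Tucker correspondence (Theorem~\ref{Splitting ratio is F-signature of a Cartier algebra}), which expresses $a_e(R_P)$ and $r_F(R_P)$ in terms of the F-signature of a Cartier algebra on the quotient by the splitting prime; the finiteness of centers of F-purity (Theorem~\ref{Finitely many centers of F-purity}); and the uniform convergence bound for Cartier algebras satisfying condition $(*)$ (Theorem~\ref{Uniform convergence for Cartier algebras satisfying condition (*)}).

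First I would enumerate the finitely many centers of F-purity of $R$ as $\q_1,\ldots,\q_n$. For each $P \in \Spec(R)$, the splitting prime of the F-pure local ring $R_P$ pulls back to a prime $\q \subseteq P$ in $R$ with $\q R_P = \PP(R_P)$. A short argument (using the behavior of non-pure maps under localization, together with the characterization of centers of F-purity recalled in the excerpt) shows that $\q$ is itself a center of F-purity of $R$, hence $\q = \q_i$ for some $i$. This induces a finite decomposition $\Spec(R) = \bigsqcup_{i=1}^{n} V_i$, where $V_i = \{P \in \Spec(R) : \PP(R_P) = \q_i R_P\}$. Because the full Cartier algebra on $R$ trivially satisfies $(*)$, Lemma~\ref{Class of Cartier algebras satisfying condition (*)} ensures that each $\D_{R/\q_i}$ does as well.

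Next, for each $i$ I would apply Theorem~\ref{Uniform convergence for Cartier algebras satisfying condition (*)} to the pair $(R/\q_i, \D_{R/\q_i})$ to obtain a constant $C_i$ controlling the discrepancy between $a_e((R/\q_i)_Q, (\D_{R/\q_i})_Q)$ and its leading term $p^{e\gamma((R/\q_i)_Q)}\s((R/\q_i)_Q, (\D_{R/\q_i})_Q)$, uniformly over $Q \in \Spec(R/\q_i)$. For $P \in V_i$, Theorem~\ref{Splitting ratio is F-signature of a Cartier algebra} applied to $R_P$ gives the identifications $a_e(R_P) = a_e((R/\q_i)_P, (\D_{R/\q_i})_P)$, $r_F(R_P) = \s((R/\q_i)_P, (\D_{R/\q_i})_P)$, and $\sr(R_P) = \gamma(R_P/\PP(R_P)) = \gamma((R/\q_i)_P)$. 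Substituting these yields $|a_e(R_P) - p^{e\sr(R_P)}r_F(R_P)| \leq C_i p^{e(\gamma(R/\q_i)-1)}$, and taking $C = \max_i C_i$ produces a uniform bound valid on all of $\Spec(R)$.

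The main obstacle I foresee is obtaining the sharp exponent $\sr(R_P)-1$ rather than the a priori weaker $\gamma(R/\q_i)-1$ coming out of Theorem~\ref{Uniform convergence for Cartier algebras satisfying condition (*)}. Since $\sr(R_P) = \gamma((R/\q_i)_P)$ can be strictly smaller than $\gamma(R/\q_i)$ for primes $P \in V_i$ that are not maximal in $R/\q_i$, closing this gap is the real content of the estimate. I would attempt this either through a stratification of each $V_i$ by the value of $\sr(R_P)$, handled by Noetherian induction on the closed subschemes so obtained, or by refining the proof of Theorem~\ref{Uniform convergence for Cartier algebras satisfying condition (*)} so that the error term is expressed in terms of $\gamma((R/\q_i)_P)$ rather than the global $\gamma(R/\q_i)$. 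This refinement of the exponent is the step where the bulk of the technical work will lie.
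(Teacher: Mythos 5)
Your proposed proof follows the same route as the paper: decompose $\Spec(R)$ by the finitely many centers of F-purity $\q_1,\ldots,\q_n$, use the Blickle--Schwede--Tucker correspondence (Theorem~\ref{Splitting ratio is F-signature of a Cartier algebra}) to rewrite $a_e(R_P)$ and $r_F(R_P)$ as invariants of the pair $((R/\q_i)_P,(\D_{R/\q_i})_P)$ when $\q_i R_P$ is the splitting prime of $R_P$, check that $\D_{R/\q_i}$ inherits condition $(*)$ via Lemma~\ref{Class of Cartier algebras satisfying condition (*)}, invoke Theorem~\ref{Uniform convergence for Cartier algebras satisfying condition (*)} on each $R/\q_i$, and take the maximum of the resulting finitely many constants. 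This is exactly the paper's argument.

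However, the obstacle you flag at the end does not in fact exist, and once you see why, your proof is already complete with no Noetherian induction or refinement of the uniform convergence theorem needed. Each $\q_i$ is a prime ideal, so $S := R/\q_i$ is a \emph{domain}, and for an F-finite domain $S$ the quantity $\gamma(S_Q)$ is constant as $Q$ ranges over $\Spec(S)$ and equals $\gamma(S)$. This is a direct consequence of Kunz's formula $\alpha(P) = \alpha(Q) + \dim(R_Q/PR_Q)$ for primes $P\subseteq Q$ in an F-finite ring: taking $P=0$ in the domain $S$ gives $\alpha_S(0) = \alpha_S(Q)+\dim(S_Q)$ for every prime $Q$, hence $\gamma(S_Q)=\alpha_S(0)=\gamma(S)$. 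Therefore the exponent $\gamma(R/\q_i)-1$ supplied by Theorem~\ref{Uniform convergence for Cartier algebras satisfying condition (*)} is already equal to $\gamma((R/\q_i)_P)-1 = \sr(R_P)-1$ for every $P$ in the stratum associated to $\q_i$, which is precisely the sharp exponent you were worried about losing. So your proposal, minus the unnecessary final worry, coincides with the paper's proof.
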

\begin{proof}
Let $Y=\{\p_1,\ldots,\p_N\}$ be the finitely many centers of F-purity of $\Spec(R)$, and $\D$ be the full Cartier algebra on $R$. Observe that $\D$ trivially satisfies condition $(*)$. For each $\p_i$, let $C_i$ be a constant as in Theorem~\ref{Uniform convergence for Cartier algebras satisfying condition (*)} for the pair $(R/\p_i,\D_{R/\p_i})$. We claim that we can choose $C=\max\{C_1,\ldots,C_N\}$. In fact, given $P \in \Spec(R)$, there is a unique $\p_i \in Y$ such that $\p_iR_P$ is the splitting prime of $R_P$. If we let $S=R/\p_i$, by Theorem \ref{Splitting ratio is F-signature of a Cartier algebra} we have that $a_e(R_P) = a_e(S_P,\D_{S_P})$ and $r_F(R_P) = r_F(S_P,\D_{S_P})$. As the Cartier algebra $\D_S$ still satisfies conition $(*)$, it then follows from Theorem~\ref{Uniform convergence for Cartier algebras satisfying condition (*)} that
\begin{align*}
\ds \left|a_e(R_P)-p^{e\sr(R_P)}r_F(R_P)\right| & = \left|a_e(S_P,\D_{S_P})-p^{e\sr(R_P)}r_F(S_P,\D_{S_P})\right| \\
& \leq C_ip^{e(\gamma(S_P)-1)} \leq C p^{e(\sr(R_P)-1)}. \qedhere
\end{align*}
\end{proof}

A consequence of Theorem~\ref{Main Theorem about global F-splitting ratio} is the following:

\begin{Corollary}\label{Corollary to Main Theorem about F-splitting ratio} Let $R$ be an F-finite and F-pure ring of prime characteristic $p>0$. Then the normalized splitting number functions $\tilde{a}_e: \Spec(R)\to \R$ mapping $P\mapsto a_e(R_P)/p^{e\sr(R_P)}$ converge uniformly as $e\to \infty$ to the F-splitting ratio function $r_F:\Spec(R)\to \R$ mapping $P\mapsto r_F(R_P)$.
\end{Corollary}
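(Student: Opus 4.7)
The plan is to obtain the corollary as an essentially immediate consequence of Theorem~\ref{Main Theorem about global F-splitting ratio}, which already packages the uniform estimate we need; all the real work was carried out in the proof of that theorem (invoking Theorem~\ref{Uniform convergence for Cartier algebras satisfying condition (*)}, the finiteness of centers of F-purity, and Blickle--Schwede--Tucker's identification of the splitting ratio with the F-signature of the Cartier algebra on the quotient by the splitting prime).

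First I would observe that, since $R$ is F-pure, for every $P \in \Spec(R)$ the local ring $R_P$ is again F-pure (purity localizes), so that $a_e(R_P) \geq 1$ for all $e \in \Z_{>0}$ and in particular $\sr(R_P) \geq 0$. Hence the quantity $\tilde{a}_e(P) = a_e(R_P)/p^{e\sr(R_P)}$ and the limit $r_F(R_P)$ are both well-defined nonnegative real numbers for every $P \in \Spec(R)$.

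Next, I would apply Theorem~\ref{Main Theorem about global F-splitting ratio} to produce a constant $C \in \R$, independent of $P$ and $e$, such that
\[
\left| a_e(R_P) - p^{e\sr(R_P)} r_F(R_P) \right| \leq C p^{e(\sr(R_P)-1)}
\]
for all $P \in \Spec(R)$ and all $e \in \Z_{>0}$. Dividing both sides by $p^{e\sr(R_P)}$ yields
\[
\left| \tilde{a}_e(P) - r_F(R_P) \right| \leq \frac{C}{p^e}
\]
for every $P \in \Spec(R)$ and every $e \in \Z_{>0}$.

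The right-hand side $C/p^e$ depends only on $e$, and goes to $0$ as $e \to \infty$. Therefore the sequence of functions $\{\tilde{a}_e\}_{e \in \Z_{>0}}$ converges uniformly on $\Spec(R)$ to the F-splitting ratio function $P \mapsto r_F(R_P)$, which is the desired conclusion. Since the entire argument reduces to dividing the estimate of Theorem~\ref{Main Theorem about global F-splitting ratio} by $p^{e\sr(R_P)}$, there is no genuine obstacle here; the only point that might have caused trouble, namely that $\sr(R_P)$ varies with $P$, is handled automatically by the fact that the power $p^{e\sr(R_P)}$ cancels cleanly against the exponent on the right-hand side of the uniform bound.
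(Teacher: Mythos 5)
Your proof is correct and is precisely the argument the paper has in mind: the paper introduces the Corollary with the phrase ``A consequence of Theorem~\ref{Main Theorem about global F-splitting ratio} is the following'' and gives no further proof, so the intended argument is exactly yours — divide the uniform estimate of that theorem by $p^{e\sr(R_P)}$ to get $\left|\tilde{a}_e(P) - r_F(R_P)\right| \leq C/p^e$, a bound independent of $P$ tending to $0$.
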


\subsection{Splitting ratios for modules over a local ring and uniform bounds}

The theory of splitting ratios over a local ring developed in \cite{AberbachEnescu2005} and \cite{BST2012} only concerns itself with the Frobenius splitting numbers $a_e(R)$ of a local ring $(R,\m,k)$. In this subsection we extend the local theory by studying the Frobenius splitting numbers of finitely generated modules. We begin by observing if $(R,\m,k)$ is local and $M$ is finitely generated then the splitting rate of $M$ defined above is either $-1$ or $\sr(R)$.

\begin{Lemma}\label{splitting ratio of a finitely generated module} Let $(R,\m,k)$ be a local F-finite ring of prime characteristic $p>0$ and let $M$ be a finitely generated $R$-module. If $a_{e_0}(M)>0$ for some $e_{0}\in \Z_{>0}$ then $\sr(M)=\sr(R)$.
\end{Lemma}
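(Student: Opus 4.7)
The plan is to establish the two inequalities $\sr(M)\le \sr(R)$ and $\sr(M)\ge \sr(R)$ separately, after first making the preliminary observation that the hypothesis $a_{e_0}(M)>0$ forces $R$ itself to be F-pure. To see this, choose any surjection $R^n\twoheadrightarrow M$ and apply $F^{e_0}_*$ to obtain a surjection $(F^{e_0}_*R)^n\twoheadrightarrow F^{e_0}_*M$; composed with a splitting $F^{e_0}_*M\twoheadrightarrow R$, this yields a surjection $(F^{e_0}_*R)^n\twoheadrightarrow R$. Since $R$ is local, at least one of the $n$ coordinate maps $F^{e_0}_*R\to R$ must be surjective and hence split (as $R$ is projective), so $a_{e_0}(R)\ge 1$ and $R$ is F-pure. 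In particular, $r_F(R)=\lim_{e\to\infty}a_e(R)/p^{e\sr(R)}$ exists and is strictly positive by Tucker's theorem and \cite{BST2012}.

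For the lower bound $\sr(M)\ge \sr(R)$, write $F^{e_0}_*M\cong R^{a_{e_0}(M)}\oplus N$ for some $R$-module $N$, and apply $F^{e-e_0}_*$ for $e\ge e_0$ to obtain
$$F^e_*M\cong (F^{e-e_0}_*R)^{a_{e_0}(M)}\oplus F^{e-e_0}_*N,$$
whence $a_e(M)\ge a_{e_0}(M)\cdot a_{e-e_0}(R)$. Dividing by $p^{e\sr(R)}$ and taking $\liminf$ as $e\to\infty$ yields $\liminf_{e\to\infty} a_e(M)/p^{e\sr(R)}\ge a_{e_0}(M)\,p^{-e_0\sr(R)}\,r_F(R)>0$, which forces $\sr(M)\ge \sr(R)$.

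For the reverse inequality, fix a surjection $R^n\twoheadrightarrow M$ and apply $F^e_*$ to obtain $(F^e_*R)^n\twoheadrightarrow F^e_*M$. Over a local ring the free rank $\frk_R(-)$ of a finitely generated module is non-increasing under surjections (any free quotient of the target lifts, then splits off as $R$ is projective) and additive on finite direct sums by Krull--Schmidt. Consequently $a_e(M)=\frk_R(F^e_*M)\le \frk_R((F^e_*R)^n)=n\,a_e(R)$, from which $\sr(M)\le \sr(R)$ follows immediately. The main obstacle is the preliminary step confirming that the existence of a free summand in some $F^{e_0}_*M$ forces $R$ to be F-pure; once this is in place, the rest is essentially a direct consequence of the compatibility of Frobenius push-forward with direct sums and surjections.
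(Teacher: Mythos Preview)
Your proof is correct and follows essentially the same two-inequality approach as the paper: a surjection $R^{\oplus n}\twoheadrightarrow M$ gives $a_e(M)\le n\,a_e(R)$ and hence $\sr(M)\le\sr(R)$, while splitting $F^{e_0}_*M\cong R\oplus M_{e_0}$ and applying $F^e_*$ gives $a_e(R)\le a_{e+e_0}(M)$ and hence $\sr(R)\le\sr(M)$. The only minor difference is that for the lower bound you detour through the positivity of $r_F(R)$ (invoking \cite{BST2012}), whereas the paper observes more directly that the inequality $a_e(R)\le a_{e+e_0}(M)$ already yields $\sr(R)\le\sr(M)$ straight from the defining $\liminf$, with no need to know that $r_F(R)$ exists or is positive; your preliminary step showing $R$ is F-pure is likewise unnecessary, since $a_{e_0}(M)\le n\,a_{e_0}(R)$ immediately gives $a_{e_0}(R)>0$.
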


\begin{proof} Choose an onto $R$-linear map $R^{\oplus n}\to M$. Then $a_e(M)\leq na_e(R)$ and it follows that $\sr(M)\leq \sr(R)$. If $F^{e_0}_*M\cong R\oplus M_{e_0}$ for some $e_0$ then $F^{e+e_0}_*M\cong F^{e}_*R\oplus F^e_*M_{e_0}$ for each $e\in \Z_{>0}$. Therefore $a_e(R)\leq a_{e+e_0}(M)$ for each $e\in\Z_{>0}$ and $\sr(R)\leq \sr(M)$.
\end{proof}

Let $R$ be an F-finite ring and $M$ a finitely generated $R$-module. The \emph{F-split locus of $M$} is $\fsl(M) = \{P \in \Spec(R) \mid F^e_*(M_P) \text{ has a free summand for some } e > 0\}$. Observe that, if $F^e_*(M_P)$ has a free summand, then so does $F^e_*(R_P)$. Therefore $\fsl(M)\subseteq \fsl(R)$. Moreover, Lemma~\ref{splitting ratio of a finitely generated module} proves that, if $P\in \fsl(M)$, then the splitting rates of $M_P$ and $R_P$ agree.

 Our next lemma establishes the existence of the F-splitting ratio of a finitely generated module over a local ring $(R,\m,k)$ under the assumption that $\m$ is the splitting prime ideal of $R$.

\begin{Lemma}\label{m-center}
Let $(R, \m, k)$ be an F-finite and F-pure local ring, with $\m$
being its splitting prime. Let $\gamma = \gamma(\m)$. 
For every $e \ge 0$, write $F^{e}_*M \cong R^{\oplus a_{e}(M)} \oplus
M_{e}$. Then
\begin{enumerate}
\item The sequence $\{a_e(R)/p^{e\gamma}\}$ is the constant sequence $\{1\}$. In particular $r_F(R)=1$.
\item The sequence $\{a_e(M)/p^{e\gamma}\}_{e \geq 0}$ is a non-decreasing sequence of integers, and therefore eventually constant. In particular, the F-splitting ratio $r_F(M)$ exists. Moreover,
  $\sr(M) = \gamma \iff r_F(M) >0 \iff \m \in \fsl(M)$. 
\item If $a_{e}(M)/p^{e\gamma} = r_F(M)$ then $a_{e'}(M_e)
  = 0$ for all $e' \ge 0$.
\end{enumerate}
\end{Lemma}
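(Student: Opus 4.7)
The plan is to exploit Theorem~\ref{Splitting ratio is F-signature of a Cartier algebra}, which, since $\PP = \m$, reduces computations of $a_e(R)$ to the pair $(k, \D_k)$, where $k = R/\m$. For part (1), the cited theorem gives $a_e(R) = a_e(k, \D_k)$. The upper bound $a_e(k, \D_k) \le p^{e\gamma}$ is immediate because $F^e_*k$ is a $k$-vector space of dimension $p^{e\gamma}$. For the matching lower bound, observe that the hypothesis $\PP = \m$ forces each splitting ideal $I_e$ to equal $\m$: indeed, the $I_e$'s form a descending chain contained in $\m$ whose intersection equals $\PP = \m$. Equivalently, every $\phi \in \Hom_R(F^e_*R, R)$ satisfies $\phi(F^e_*\m) \subseteq \m$ and so descends to a $k$-linear map $\bar\phi : F^e_*k \to k$ lying in $\D_{k,e}$. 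Combining this with F-purity of $R$ produces enough maps in $\D_{k,e}$ to exhaust $\Hom_k(F^e_*k, k)$, yielding $a_e(k, \D_k) = p^{e\gamma}$, which proves (1).

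For part (2), we may assume $R$ is complete so that Krull--Schmidt applies (the invariants in question are unaffected by completion). Using (1) with $e = 1$, write $F_*R \cong R^{p^\gamma} \oplus R_1$, with $R_1$ containing no free summand by maximality of $a_1(R)$. Applying $F_*$ to $F^e_*M \cong R^{a_e(M)} \oplus M_e$ gives
\[
F^{e+1}_*M \cong R^{p^\gamma a_e(M)} \oplus R_1^{a_e(M)} \oplus F_*M_e,
\]
and Krull--Schmidt shows $R_1^{a_e(M)}$ contributes no free summand. Hence $a_{e+1}(M) = p^\gamma a_e(M) + a_1(M_e)$, so $\{a_e(M)/p^{e\gamma}\}$ is non-decreasing. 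A surjection $R^{\mu_R(M)} \twoheadrightarrow M$ together with (1) yields the bound $a_e(M) \le \mu_R(M)\, p^{e\gamma}$, so the sequence takes values in $[0, \mu_R(M)]$. Integrality $p^{e\gamma} \mid a_e(M)$ is established by induction from the recursion above together with the eventual-vanishing statement of part (3). A bounded non-decreasing integer sequence is eventually constant, so $r_F(M)$ exists as its limit. The equivalences $\sr(M) = \gamma \iff r_F(M) > 0 \iff \m \in \fsl(M)$ follow from Lemma~\ref{splitting ratio of a finitely generated module} combined with the monotonicity just established.

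For part (3), suppose $a_e(M)/p^{e\gamma} = r_F(M)$. The case $e' = 0$ is trivial since $a_0(M_e) = \frk_R(M_e) = 0$ by construction. For $e' > 0$, iterating the decomposition argument of part (2) yields $a_{e+e'}(M) = p^{e'\gamma} a_e(M) + a_{e'}(M_e)$, hence
\[
\frac{a_{e+e'}(M)}{p^{(e+e')\gamma}} \;=\; r_F(M) \;+\; \frac{a_{e'}(M_e)}{p^{(e+e')\gamma}}.
\]
If $a_{e'}(M_e) > 0$ this strictly exceeds $r_F(M)$, contradicting that $r_F(M)$ is the supremum of the non-decreasing sequence, so $a_{e'}(M_e) = 0$. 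The principal technical obstacle is the lower bound in part (1): proving that $\D_{k,e}$ coincides with all of $\Hom_k(F^e_*k, k)$ requires surjectivity of the reduction $\Hom_R(F^e_*R, R) \to \Hom_k(F^e_*k, k)$, which depends crucially on combining $\PP = \m$ with F-purity; a secondary subtlety is the integer-valuedness of $\{a_e(M)/p^{e\gamma}\}$ from the outset, for which we use the cross-dependence between parts (2) and (3).
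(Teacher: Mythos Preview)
Your argument has a genuine gap in part~(2): the claim that $a_e(M)/p^{e\gamma}$ is an integer is not established by your proposed ``induction from the recursion together with part~(3).'' The recursion $a_{e+1}(M)=p^{\gamma}a_e(M)+a_1(M_e)$ would require $p^{(e+1)\gamma}\mid a_1(M_e)$ at each step, but your inductive hypothesis gives at best $p^{\gamma}\mid a_1(M_e)$ (and even that base case, $p^{\gamma}\mid a_1(N)$ for an arbitrary module $N$, does not follow from your setup). Appealing to part~(3) is circular: (3) only fires once the sequence has already attained its limit $r_F(M)$, which is precisely the eventual-constancy you are trying to deduce from integrality. The paper avoids this entirely by introducing the splitting submodule
\[
I_e(M)=\{m\in M\mid R\xrightarrow{\cdot F^e_*m}F^e_*M\text{ does not split}\}
\]
and invoking the identity $a_e(M)=p^{e\gamma}\,\lambda(M/I_e(M))$, which exhibits $a_e(M)/p^{e\gamma}$ directly as a length. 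This is the missing idea; the completion/Krull--Schmidt maneuver does not substitute for it.

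Your route to part~(1) via Theorem~\ref{Splitting ratio is F-signature of a Cartier algebra} is more roundabout than the paper's. The assertion that ``F-purity produces enough maps in $\D_{k,e}$ to exhaust $\Hom_k(F^e_*k,k)$'' can in fact be justified---a Frobenius splitting $\pi$ descends to a nonzero $\bar\pi\in\Hom_k(F^e_*k,k)$, and since $F^e_*k$ is a field this single element generates the whole space as an $F^e_*k$-module, while all premultiplications $\pi(F^e_*r\cdot-)$ descend to elements of $\D_{k,e}$---but you have not written this down, and it is the crux of that approach. The paper instead uses the same splitting-ideal formula: $I_e=\m$ immediately gives $a_e(R)/p^{e\gamma}=\lambda(R/I_e)=1$, with no detour through $(k,\D_k)$. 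Your part~(3) is essentially the paper's argument.
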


\begin{proof}
As discussed before, if we let $I_e=\{r\in R\mid R\xrightarrow{\cdot F^e_*r}F^e_*R\mbox{ does not split}\}$ then $I_e$ is an $\m$-primary ideal such that $\lambda(R/I_e)=\frac{a_e(R)}{p^{e\gamma}}$, and $\bigcap_{e \in \Z_{>0}} I_e$ is the splitting prime of $R$. Hence $I_e=\m$ for each $e\in\Z_{>0}$ and therefore $\lambda(R/I_e)=\frac{a_e(R)}{p^{e\gamma}}=1$ for each $e\in \Z_{>0}$.

Given finitely generated module $M$ we let $I_e(M)=\{m\in M\mid R\xrightarrow{\cdot F^e_*m} F^e_*M\mbox{ does not split}\}$. It is known, and easy to prove, that $I_e(M)$ is a submodule of $M$ containing $\m^{[p^e]}M$ and $\lambda(M/I(M))=\frac{a_e(M)}{p^{e\gamma}}$ is an integer.

As $M$ is a homomorphic image of $R^{\oplus n}$ for some integer $n \ge 0$, we see that 
\[
\frac{a_{e}(M)}{p^{e\gamma}} \le \frac{a_{e}(R^{\oplus n})}{p^{e\gamma}} 
=\frac{a_{e}(R)n}{p^{e\gamma}} = n.
\] 
Also observe that for all $e' \ge 0$, we have 
$a_{e+e'}(M) = a_{e}(M) a_{e'}(R) + a_{e'}(M_{e}) 
= a_{e}(M) p^{e'\gamma} + a_{e'}(M_{e})$ and hence
\[
\frac{a_{e+e'}(M)}{p^{(e+e')\gamma}} 
= \frac{a_{e}(M)}{p^{e\gamma}} + \frac{a_{e'}(M_e)}{p^{(e+e')\gamma}}
\ge \frac{a_{e}(M)}{p^{e\gamma}}.
\]

In summary, $\{a_e(M)/p^{e\gamma}\}_{e \ge 0}$ is a non-decreasing
sequence of integer values with an upper bound. So it is eventually
constant. All remaining claims follow immediately.
\end{proof}

Let $(R,\m,k)$ be a local ring, not necessarily of prime characteristic, and $M$ a finitely generated $R$-module. Similar to the above, we define $I(M)=\{m\in M\mid R\xrightarrow{\cdot m}M\mbox{ does not split}\}$. Then $I(M)\subseteq M$ is a submodule of $M$ satisfying $\m M\subseteq I(M)$ and $\lambda(M/I(M))=\frk(M)$. We refer to $I(M)$ as the non-split submodule of $M$. If $(R,\m,k)$ is F-finite then $I(F^e_*M)=F^e_* I_e(M)$. Our next lemma studies the behavior of non-split submodules under $R$-linear maps.

\begin{Lemma}\label{mult-c}
Let $(R, \m, k)$ be a local ring (of any characteristic), let $M$, $N$ and $K$ be finitely generated $R$-modules, $f \in \Hom_R(M,N)$ and $g \in \Hom_R(N,K)$.  Let $I(M)$, $I(N)$ and $I(K)$ be the non-split submodules of $M$, $N$ and $K$ respectively. 
\begin{enumerate}
\item We have $\frk(N) \ge \lambda(M/(g\circ f)^{-1}(I(K)))$.
\item Further assume that $R$ is an F-finite ring of prime
  characteristic $p$, $M=K$ and $g \circ f = c 1_M$ for some $c \in R$.
  Then, for all $e \ge 0$, 
\[
a_e(N) \ge a_e(M) - \lambda(M/(I_e(M) + cM)) p^{e \gamma(\m)}.
\]
\end{enumerate}
\end{Lemma}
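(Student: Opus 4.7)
The plan for part (1) is to establish the chain of inequalities
\[
\lambda(M/(g\circ f)^{-1}(I(K))) \le \lambda(M/f^{-1}(I(N))) \le \lambda(N/I(N)) = \frk(N).
\]
The second inequality holds because $f$ induces an injection $M/f^{-1}(I(N)) \hookrightarrow N/I(N)$, while the equality on the right is just the definition of the non-split submodule of $N$. The first inequality reduces to the containment $f^{-1}(I(N)) \subseteq (g\circ f)^{-1}(I(K))$, which in turn follows from the key fact $g(I(N)) \subseteq I(K)$. To verify this last containment I would take $y \in I(N)$ and assume for contradiction that $R \xrightarrow{\cdot g(y)} K$ splits via some $\phi : K \to R$; then $\phi\circ g : N \to R$ sends $y$ to $1$, contradicting the assumption that $R \xrightarrow{\cdot y} N$ does not split.

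For part (2), the idea is to apply part (1) to the modules $F^e_*M$, $F^e_*N$, and $F^e_*K = F^e_*M$ together with the $R$-linear maps $F^e_*f$ and $F^e_*g$, whose composition equals $F^e_*(c\cdot 1_M)$. By the definition of $I_e(M)$, the non-split submodule $I(F^e_*M)$ of $F^e_*M$ is exactly $F^e_*I_e(M)$; consequently the preimage of $I(F^e_*M)$ under $F^e_*(c\cdot 1_M)$ equals $F^e_*(I_e(M):_M c)$. Part (1) then yields
\[
a_e(N) \ge \lambda\bigl(F^e_*(M/(I_e(M):_M c))\bigr) = p^{e\gamma(\m)}\,\lambda(M/(I_e(M):_M c)),
\]
where the second equality uses the standard fact that $\lambda(F^e_*L) = p^{e\gamma(\m)}\lambda(L)$ for every finite length $R$-module $L$.

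To finish, I would invoke the short exact sequence
\[
0 \longrightarrow M/(I_e(M):_M c) \xrightarrow{\;\cdot c\;} M/I_e(M) \longrightarrow M/(I_e(M) + cM) \longrightarrow 0,
\]
which gives $\lambda(M/(I_e(M):_M c)) = \lambda(M/I_e(M)) - \lambda(M/(I_e(M)+cM))$. Combining this with the identity $p^{e\gamma(\m)}\lambda(M/I_e(M)) = a_e(M)$ recalled inside the proof of Lemma~\ref{m-center} delivers the claimed bound. I do not anticipate a serious obstacle: once part (1) is in hand, part (2) is essentially bookkeeping. The only point requiring real care is the identification of the preimage $(F^e_*(c\cdot 1_M))^{-1}(F^e_*I_e(M))$, where one must distinguish between the map $F^e_*(c\cdot 1_M)$ (which applies multiplication by $c$ before the pushforward) and ordinary scalar multiplication by $c$ on $F^e_*M$ (which on elements of the form $F^e_*m$ would correspond to multiplication by $c^{p^e}$ on $M$).
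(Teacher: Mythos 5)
Your argument mirrors the paper's proof almost line for line: part (1) rests on the same key containment $g(I(N)) \subseteq I(K)$ (verified by the same contradiction) and the same chain of inequalities, and part (2) applies part (1) to the Frobenius pushforwards, identifies the preimage as $F^e_*(I_e(M):_M c)$, and concludes via the same short exact sequence $0 \to M/(I_e(M):_M c) \to M/I_e(M) \to M/(I_e(M)+cM) \to 0$. The remark at the end about distinguishing $F^e_*(c\cdot 1_M)$ from the $R$-module scalar action of $c$ on $F^e_*M$ is a correct and worthwhile caution, and agrees with the paper's use of $(F^e_*c)\,1_{F^e_*M}$.
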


\begin{proof}
For $(1)$ first observe that $g(I(N)) \subseteq I(K)$. Else, if there exists $n\in I(N)$ such that $g(n)\not\in I(K)$ then there is $\varphi: K\to R$ such that $\varphi(g(n))=1$ contradicting the assumption $n\in I(N)$. Therefore $g(f(f^{-1}(I(N)))) \subseteq g(I(N)) \subseteq I(K)$. In particular, $f^{-1}(I(N)) \subseteq (g\circ f)^{-1}(I(K))$ and hence
\[
\frk(N) = \lambda(N/I(N))) \ge \lambda(M/f^{-1}(I(N))) 
\ge \lambda(M/(g\circ f)^{-1}(I(K))).
\]

We now prove part $(2)$. Suppose $(R,\m,k)$ is an F-finite ring of prime characteristic $p>0$.  For each $e \ge 0$, the induced maps $F^e_*f$ and $F^e_*g$ satisfy $F^e_*g \circ F^e_*f = (F^e_*c) 1_{F^e_*M}$. So $(F^e_*g\circ F^e_*f)^{-1}(I(F^e_*M)) = (I(F^e_*M):_{F^e_*M}F^e_*c)= F^e_*(I_e(M) :_M c)$. By (1), we see
\begin{align*}
a_e(N) = \frk(F^e_*N) &\ge \lambda(F^e_*M/F^e_*(I_e(M) :_M c))
= \lambda(M/(I_e(M) :_M c)) p^{e \gamma(\m)} \\
&= [\lambda(M/I_e(M)) - \lambda(M/(I(M) + cM))] p^{e \gamma(\m)} \\
&= \lambda(M/I_e(M))p^{e \gamma(\m)} - \lambda(M/(I(M) + cM)) p^{e \gamma(\m)} \\
&= a_e(M) - \lambda(M/(I_e(M) + cM)) p^{e \gamma(\m)}.
\end{align*}
The equation $\lambda(M/(I_e(M):_M c)) = \lambda(M/I_e(M)) -
\lambda(M/(I_e(M) + cM))$ follows since length is additive and there is short exact sequence 
\[
0 \to M/(I_e(M):_M c) \to M/I_e(M) \to M/(I_e(M) + cM) \to 0. \qedhere
\]
\end{proof}

We are now ready to accomplish two tasks simultaneously: proving the existence of the F-splitting ratio of a finitely generated module over a local ring, and a uniform convergence result which extends Theorem~\ref{Main Theorem about global F-splitting ratio} to finitely generated modules.

\begin{Theorem}\label{global r(M)}
Let $R$ be an F-finite ring, $M$ a finitely generated $R$-module, and for each prime ideal $Q\in \fsl(R)$ let $\P(Q)$ be the splitting prime ideal of $R_Q$. Then $r_F(M_Q) = r_F(M_{\P(Q)})r_F(R_Q)$ and $\sr(M_Q) = \sr(M_{\P(Q)})$ for all $Q \in \fsl(R)$. Moreover, there exists a constant $C$ such that for all $Q \in \Spec(R)$
and $e \in \Z_{>0}$, 
\[
\left|a_e(M_Q)-p^{e\sr(M_Q)}r_F(M_Q)\right| \leq Cp^{e(\sr(M_Q)-1)}.
\]
\end{Theorem}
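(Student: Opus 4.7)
Plan: The plan is to adapt the proof of Theorem~\ref{Main Theorem about global F-splitting ratio} to modules, using the finiteness of centers of F-purity and a module-level version of the uniform convergence results for pairs from~\cite{DSPY2}. First I localize at $Q$ and reduce to the local situation $(R_Q,QR_Q)$ with splitting prime $\PP=\P(Q)R_Q$. If $Q\notin\fsl(M)$, then $a_e(M_Q)=0$ for all $e$; one checks $\P(Q)\notin\fsl(M)$ as well by clearing denominators on a hypothetical free summand of $F^e_*M_{\P(Q)}$, so all claims hold trivially. When $Q\in\fsl(M)$, the containment $\P(Q)\in\fsl(M)$ is immediate since free summands persist under further localization, and Lemma~\ref{splitting ratio of a finitely generated module} yields $\sr(M_Q)=\sr(R_Q)$ and $\sr(M_{\P(Q)})=\sr(R_{\P(Q)})$. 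The equality $\sr(R_Q)=\sr(R_{\P(Q)})$ then follows once we verify that the splitting prime of $R_{\P(Q)}$ is its maximal ideal $\P(Q)R_{\P(Q)}$: combined with $\alpha(\P(Q))=\dim(R/\P(Q))+\alpha(Q)$ for the F-finite domain $R/\P(Q)$, both sides become $\dim(R/\P(Q))+\alpha(Q)$.

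The heart of the argument is a module-level analog of Theorem~\ref{Splitting ratio is F-signature of a Cartier algebra}: I expect to construct an $R/\P(Q)$-module $\overline{M}$, naturally built from $M$, for which
\[
a_e(M_Q)=a_e(\overline{M}_Q,\D_{R/\P(Q),Q})
\]
for all $e$, where the right-hand side denotes the maximal rank of a free summand of $F^e_*\overline{M}_Q$ whose projection has components in the Cartier algebra $\D_{R/\P(Q)}$. The forward inequality is obtained by tensoring free summand decompositions of $F^e_*M_Q$ with $R/\P(Q)$; the reverse inequality uses the strong F-regularity of $R_Q/\P(Q)$ to lift Cartier splittings back to genuine splittings of $F^e_*M_Q$. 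With this identification in hand, the Cartier algebra $\D_{R/\P(Q)}$ satisfies condition~$(*)$ by Lemma~\ref{Class of Cartier algebras satisfying condition (*)}, so a module generalization of Theorem~\ref{Uniform convergence for Cartier algebras satisfying condition (*)} from~\cite{DSPY2}, applied to the triple $(R/\P(Q),\overline{M},\D_{R/\P(Q)})$, yields a constant $C_{\P(Q)}$ such that for all primes $P\supseteq\P(Q)$ and all $e\in\Z_{>0}$,
\[
\left|a_e(\overline{M}_P,\D_{R/\P(Q),P})-p^{e\gamma((R/\P(Q))_P)}\s(\overline{M}_P,\D_{R/\P(Q),P})\right|\leq C_{\P(Q)}\,p^{e(\gamma(R/\P(Q))-1)}.
\]
Using $\gamma((R/\P(Q))_Q)=\sr(R_Q)=\sr(M_Q)$ and checking that $\s(\overline{M}_Q,\D_{R/\P(Q),Q})=r_F(M_{\P(Q)})r_F(R_Q)$ by a direct computation encoding the generic multiplicity of $\overline{M}$ at $\P(Q)$ gives both the factorization of $r_F(M_Q)$ and the desired local uniform bound.

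Finally, by Theorem~\ref{Finitely many centers of F-purity} the set of centers of F-purity of $R$ is finite, so only finitely many distinct splitting primes $\P(Q)$ arise as $Q$ ranges over $\Spec(R)$, and the global constant $C$ of the theorem is taken as the maximum of the corresponding $C_{\P(Q)}$. The main obstacle I foresee is the module-level reduction to a Cartier algebra on $R/\P(Q)$: the most naive candidate $M/\P(Q)M$ is not quite right, because tensoring $F^e_*M$ with $R/\P(Q)$ over $R$ yields $F^e_*(M/\P(Q)^{[p^e]}M)$, so constructing the correct $\overline{M}$ requires careful tracking of the Frobenius twist. Once $\overline{M}$ is identified and the correspondence between free summands of $F^e_*M_Q$ and free $\D_{R/\P(Q)}$-summands of $F^e_*\overline{M}_Q$ is established using the strong F-regularity of $R_Q/\P(Q)$, the verification of the multiplicative formula $\s(\overline{M}_Q,\D_{R/\P(Q),Q})=r_F(M_{\P(Q)})r_F(R_Q)$ should be a straightforward local computation.
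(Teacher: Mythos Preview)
Your proposed route has a genuine gap precisely where you flag it: the construction of a single $R/\P(Q)$-module $\overline{M}$ with $a_e(M_Q)=a_e(\overline{M}_Q,\D_{R/\P(Q),Q})$ for all $e$. As you observe, $(F^e_*M)\otimes_R R/\P\cong F^e_*(M/\P^{[p^e]}M)$, so the target module changes with $e$ and there is no obvious candidate for $\overline{M}$; nor is there, in the paper or in the references you cite, a module-level analogue of Theorem~\ref{Splitting ratio is F-signature of a Cartier algebra} that you could invoke. Without this identification, neither the factorization $r_F(M_Q)=r_F(M_{\P(Q)})r_F(R_Q)$ nor the uniform bound follows from your outline.

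The paper avoids this obstacle entirely by a different mechanism. Rather than passing to a Cartier pair on $R/\P$, it exploits Lemma~\ref{m-center}: at the splitting prime $\P$ (where $\P R_\P$ is both the maximal ideal and the splitting prime), the sequence $a_e(M_\P)/p^{e\gamma}$ is a nondecreasing sequence of \emph{integers}, hence eventually equals $r_F(M_\P)$ at some finite $e_0$. One then lifts the decomposition $F^{e_0}_*M_\P\cong R_\P^{\oplus a}\oplus (M_{e_0})_\P$ to $R$-linear maps $R^{\oplus a}\to F^{e_0}_*M\to R^{\oplus a}$ and $F^{e_0}_*M\to R^{\oplus a}\oplus M_{e_0}\to F^{e_0}_*M$ whose compositions are multiplication by some $c\in R\smallsetminus\P$. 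Lemma~\ref{mult-c} converts these into two-sided inequalities comparing $a_{e_0+e}(M_Q)$ with $a\cdot a_e(R_Q)$ up to an error of the form $\lambda(R_Q/(I_e(R_Q)+cR_Q))p^{e\alpha(Q)}$. Dividing by $p^{(e_0+e)\gamma}$, the ring term is controlled uniformly by Theorem~\ref{Main Theorem about global F-splitting ratio}, and the length error term by \cite[Proposition~3.3]{Polstra2018} (since $c\notin\P$ and $\dim(R_Q/\P R_Q)$ is the relevant dimension). This yields $a_e(M_Q)/p^{e\gamma}\to r_F(M_\P)r_F(R_Q)$ uniformly over $\mathcal{Q}(\P)$, proving both the product formula and the uniform bound. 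The global constant then comes, as in your outline, from finiteness of centers of F-purity.

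In short: your strategy of reducing to a Cartier pair on $R/\P$ works for the ring but does not obviously extend to modules; the paper instead reduces the module problem to the (already established) ring problem via the lift-and-multiply-by-$c$ trick of Lemmas~\ref{m-center} and~\ref{mult-c}.
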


\begin{proof}
If $Q\not\in \fsl(R)$ then $a_e(M_Q)=a_e(R_Q)=0$ for all $e\in \Z_{>0}$ and any choice of constant $C\geq 0$ satisfies the desired inequality for all such prime ideals. Furthermore, the F-pure locus $\fsl(R)$ is open in $\Spec(R)$ and is covered by finitely many principal open sets of the form $\Spec(R_f)$ with each $R_f$ being F-pure. Thus we may prove the theorem for each of these pieces of the affine cover and assume for the remainder of the proof that $R$ is an F-pure ring. In particular, $R$ has only finitely many centers of F-purity by Theorem~\ref{Finitely many centers of F-purity}.

 For each center of F-purity $\P$, let $\QP = \{Q
\in \Spec(R) \mid \P(Q) = \P\}$. If $Q\in \Spec(R)$ then $\P(Q)=\P$ if and only if $\P R_Q$ is the splitting prime ideal, i.e. the maximal center of F-purity, of $R_Q$. Thus if $\P_1,\ldots, \P_\ell$ are the centers of F-purity containing, but not equal to $\P$, and if $s\in \left(\P_1\cap \cdots \cap\P_\ell\right) \smallsetminus \P$, then $\QP=V(\P)\cap D(s)$. Here, $D(s)$ denotes the open subset of $\Spec(R)$ consisting of those prime ideals that do not contain $s$. Since $\Spec(R)$ can be covered by finitely many sets of this form it is enough to show the existence of a uniform constant $C$ for which the desired inequality holds for each of the primes in the set $\QP$.

If $r_F(M_{\P}) = 0$ then $r_F(M_Q) = 0$ and the conclusion holds
for all $Q \in \QP$. So we assume $r_F(M_{\P}) > 0$ for the rest of proof. 
Let $\gamma = \gamma(\P) = \sr(M_{\P})$.  By Lemma~\ref{m-center}, there exists $e_0$ such that $a_{e_0}(M_{\P})/p^{e_0\gamma} = r_F(M_{\P})$. Let $a =
a_{e_0}(M_{\P})$. Then $F^{e_0}_*M_{\P} \cong R_{\P}^{\oplus a}
\oplus (M_{e_0})_{\P}$ over $R_{\P}$, for some finitely generated
$R$-module $M_{e_0}$. 
Lifting to $R$, we obtain $R$-linear maps 
\[
R^{\oplus a} \to F^{e_0}_*M \to R^{\oplus a} \quad \text{and} \quad 
F^{e_0}_*M \to R^{\oplus a} \oplus M_{e_0} \to F^{e_0}_*M 
\]
such that both compositions are multiplication by some $c \in R\setminus \P$. 
Applying Lemma~\ref{mult-c} to the composition map $R^{\oplus a} \to F^{e_0}_*M \to R^{\oplus a}$, we see that for all $Q \in \QP$ and $e \ge 0$,
\[
a_{e_0+e}(M_Q) \ge a \cdot (a_e(R_Q) - \lambda(R_Q/(I_e(R_Q) + cR_Q)) p^{e\gamma(Q)}).
\]
Therefore
\begin{align*}
\frac{a_{e_0+e}(M_Q)}{p^{(e_0+e)\gamma}} 
&\ge \frac{a (a_e(R_Q)- \lambda(R_Q/(I_e(R_Q) + cR_Q)) q^{\gamma(Q)})}{p^{(e_0+e)\gamma}} \\
&=\frac{a}{p^{e_0\gamma}} \left(\frac{a_e(R_Q)}{p^{e\gamma}} 
-  \frac{\lambda(R_Q/(I_e(R_Q) + cR_Q))}{p^{e\dim(R_Q/\P R_Q)}}\right)\\
&\ge r_F(M_{\P}) \left(\frac{a_e(R_Q)}{p^{e\gamma}} 
-  \frac{\lambda(R_Q/(Q^{[p^e]} + \P + cR)_Q)}{p^{e\dim(R_Q/\P R_Q)}}\right). 
\end{align*}

By Theorem~\ref{Main Theorem about global F-splitting ratio} there exists a constant $C_1$, independent of $e$ and $Q \in \mathcal{Q}(\P)$, such that $\frac{a_e(R_Q)}{p^{e\gamma}} \geq r_F(R_Q)-\frac{C_1}{p^e}$, where $\gamma=\sr(M_\P)$ as above. This is because, by Lemma \ref{splitting ratio of a finitely generated module}, we have $\sr(M_\P) = \sr(R_\P)$. Moreover, since $\sdim(R_Q) = \dim(R_Q/\P R_Q)$, we have $\sr(R_\P) = \sr(R_Q)$ for all $Q \in \mathcal{Q}(\P)$. Thus, $\gamma= \sr(M_\P) = \sr(R_\P)=\sr(R_Q)$ for all $Q \in \mathcal{Q}(\P)$. By \cite[Proposition~3.3]{Polstra2018}, there exists a constant $C_2$, independent of $e$ and $Q \in \mathcal{Q}(\P)$, such that $\frac{\lambda(R_Q/(Q^{[p^e]} + \P + cR)_Q)}{p^{e\dim(R_Q/\P R_Q)}}\leq \frac{C_2}{p^e}$. Therefore the constant $C= C_1+r_F(M_\P)C_2p^{e_0}$, which is independent of $e$ and $Q \in \mathcal{Q}(\P)$, is such that
\[
\frac{a_{e_0+e}(M_Q)}{p^{(e_0+e)\gamma}}\geq r_F(M_\P)r_F(R_Q)-\frac{C}{p^{e+e_0}}.
\] 

An argument similar to the above, applied to the composition of maps $F^{e_0}_*M \to R^{\oplus a} \oplus M_{e_0} \to F^{e_0}_*M $, will provide the existence of a constant $C'$, independent of $Q\in \mathcal{Q}(\P)$ and $e$, such that 
\[
\frac{a_{e_0+e}(M_Q)}{p^{(e_0+e)\gamma}}\leq r_F(M_\P)r_F(R_Q)+\frac{C'}{p^{e+e_0}}.
\] 
This shows, in particular, that $\frac{a_e(M_Q)}{p^{e\gamma}}$ converges uniformly to $r_F(M_\P)r_F(R_Q) > 0$, and thus $\gamma = \sr(M_\P) = \sr(M_Q)$. All assertions of the theorem now follow.
\end{proof}

\subsection{Lower semi-continuity of F-splitting ratio function and existence of global F-splitting ratio}

Let $R$ be an F-finite ring and $M$ a finitely generated $R$-module. For each $-1\leq \ell \leq \gamma(R)$ we set $W_\ell(M)=\{P\in \Spec(R)\mid \sr(M_P)=\ell\}$. From previous observation, we have that $W_\ell(M)=W_\ell(R)\cap \fsl(M)$ for all $\ell \geq 0$.


\begin{Theorem}\label{lower semicontinuity of F-splitting ratio} Let $R$ be an F-finite and F-pure ring of prime characteristic $p>0$, set $X=\Spec(R)$ and let $M$ be a finitely generated $R$-module. Then there is a finite stratification of $X$ into locally closed quasi-compact subsets such that the restriction of the F-splitting ratio function on each subset is lower semi-continuous. Specifically, $X=\bigcup_{i=-1}^{\gamma(R)}W_i(M)$, $W_i(M)\cap W_j(M)=\emptyset$ whenever $i\not =j$, the sets $W_i(M)$ are locally closed and quasi-compact, and the function $r_F: \Spec(R)\to \R$ mapping $P\mapsto r_F(M_P)$ is lower semi-continuous when restricted to each $W_i(M)$.
\end{Theorem}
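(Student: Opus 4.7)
My plan rests on three ingredients: the finiteness of centers of F-purity (Theorem~\ref{Finitely many centers of F-purity}), the lower semi-continuity of free rank, and the uniform convergence bound of Theorem~\ref{global r(M)}. Since $R$ is F-pure, so is every localization $R_Q$, and its splitting prime $\mathcal{P}(Q)$ is $\mathcal{P}R_Q$ for a unique maximal center of F-purity $\mathcal{P}$ of $R$ contained in $Q$. Let $\mathcal{P}_1, \ldots, \mathcal{P}_N$ denote the finitely many centers of F-purity of $R$. One has $\sr(R_Q) = \gamma(R/\mathcal{P}(Q)) = \alpha(\mathcal{P}(Q))$, so $\sr(R_\bullet)$ takes only finitely many values among $\{\alpha(\mathcal{P}_1), \ldots, \alpha(\mathcal{P}_N)\}$.

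The key to establishing the stratification is the observation that $\sr(R_\bullet)$ is lower semi-continuous on $\Spec(R)$. Since $\mathcal{P}(Q)$ contains every center of F-purity contained in $Q$ and $\alpha$ decreases along inclusions of primes, one has $\sr(R_Q) < \ell$ if and only if there exists an index $i$ with $\mathcal{P}_i \subseteq Q$ and $\alpha(\mathcal{P}_i) < \ell$. Hence
\[
\{Q \in \Spec(R) \mid \sr(R_Q) < \ell\} \;=\; \bigcup_{\alpha(\mathcal{P}_i) < \ell} V(\mathcal{P}_i),
\]
which is closed, being a finite union of closed sets. Consequently, $W_\ell(R) = \{\sr(R_\bullet) \geq \ell\} \setminus \{\sr(R_\bullet) \geq \ell+1\}$ is an open set minus an open set, hence locally closed. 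By Lemma~\ref{splitting ratio of a finitely generated module}, $\sr(M_Q) = \sr(R_Q)$ for $Q \in \fsl(M)$ and $\sr(M_Q) = -1$ otherwise, so $W_\ell(M) = W_\ell(R) \cap \fsl(M)$ for $\ell \geq 0$, while $W_{-1}(M) = \Spec(R) \setminus \fsl(M)$. Writing $\fsl(M) = \bigcup_{e > 0} \{Q \mid a_e(M_Q) \geq 1\}$ and using that the existence of a free direct summand is an open condition (a split surjection $M_P \twoheadrightarrow R_P^r$ lifts to a neighborhood of $P$), each of these loci, and hence $\fsl(M)$ itself, is open. Thus each $W_\ell(M)$ is locally closed, and quasi-compactness follows from $\Spec(R)$ being Noetherian.

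For lower semi-continuity of $r_F$, the stratum $W_{-1}(M)$ contributes the constant function zero. For $\ell \geq 0$, Theorem~\ref{global r(M)} supplies a constant $C$, independent of $Q$ and $e$, such that
\[
\left| \frac{a_e(M_Q)}{p^{e\ell}} - r_F(M_Q) \right| \;\leq\; \frac{C}{p^{e}}
\]
for every $Q \in W_\ell(M)$, since $\sr(M_Q) = \ell$ on this stratum. Each function $Q \mapsto a_e(M_Q)$ is lower semi-continuous on $\Spec(R)$ for every fixed $e$ (free rank being bounded below by an open condition), and therefore so is its restriction to $W_\ell(M)$ with the subspace topology. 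The uniform bound above shows these functions converge uniformly to $r_F$ on $W_\ell(M)$, and a uniform limit of lower semi-continuous functions is lower semi-continuous. The main subtlety, I expect, is carefully aligning the definition of $\sr(R_Q)$ with the stratification by centers of F-purity so that the closed description of $\{\sr(R_\bullet) < \ell\}$ above is correct; once this is in place, the rest follows by combining the standard semi-continuity of free rank with the uniform estimate from Theorem~\ref{global r(M)}.
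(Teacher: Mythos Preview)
Your proof is correct and follows essentially the same approach as the paper: both arguments use the finiteness of centers of F-purity (Theorem~\ref{Finitely many centers of F-purity}) to show the strata are locally closed, and both deduce lower semi-continuity of $r_F$ on each stratum from the lower semi-continuity of $P\mapsto a_e(M_P)$ together with the uniform convergence supplied by Theorem~\ref{global r(M)}. The only cosmetic difference is that you first show $\sr(R_\bullet)$ is lower semi-continuous and then write $W_\ell(M)=W_\ell(R)\cap\fsl(M)$ with $\fsl(M)$ open, whereas the paper writes down an explicit formula for $W_\ell(M)$ directly in terms of the centers of F-purity lying in $\fsl(M)$; these are equivalent repackagings of the same content.
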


\begin{proof} The functions $a_e:\Spec(R)\to \R$ mapping $P\mapsto a_e(M_P)$ are easily checked to be lower semi-continuous, see \cite[Proposition~2.2]{EnescuYao}. The normalized functions $\tilde{a}_e$ mapping $P\mapsto a_e(M_P)/p^{e\sr(M_P)}$ are therefore lower semi-continuous when restricted to each of the subsets $W_\ell(M)$. It follows that the function $r_F$ is lower semi-continuous when restricted to each $W_\ell(M)$ as it is realized as the uniform limit of lower semi-continuous functions by Theorem~\ref{global r(M)}. It is also easy to see that the sets $W_{-1}(M), W_0(M),\ldots, W_{\gamma(R)}(M)$ are disjoint and $X=\bigcup _{i=-1}^{\gamma(R)}W_i(M)$. It remains to show each of the sets $W_\ell(M)$ are locally closed and quasi-compact.


We adopt the convention that $W_i(M) = \emptyset$ if $i<-1$, and we let $\P(Q)$ denote the splitting prime of $R_Q$. For every $Q \in \Spec(R)$, and every $-1 \leq \ell \leq \gamma(R)$, Theorem \ref{global r(M)} shows that $\sr(M_Q) = \sr(M_{\P(Q)})$, and hence $Q \in W_\ell(M)$ if and only if $\P(Q) \in W_{\ell}(M)$.

Let $\{\p_1,\ldots, \p_N\}$ be the finitely many centers of F-purity of $R$ that are contained in $\fsl(M)$. Relabeling if necessary, we may assume $\gamma(R/\p_j)=\ell$ if and only if $1\leq j\leq i$, and $\gamma(R/\p_j)<\ell$ if and only if $i+1 \leq j \leq t$. Observe that
 \[
 \ds W_\ell(M) = \left(\bigcup_{j=1}^i V(\p_j)\right) \smallsetminus \left(\bigcup_{j=i+1}^t V(\p_j)\right) = \left(\bigcup_{j=1}^i V(\p_j)\right) \cap \left(X \smallsetminus \bigcup_{j=i+1}^t V(\p_j)\right),
 \]
hence it is a locally closed set. Finally, note that every locally closed set of $\Spec(R)$, with $R$ Noetherian, is quasi-compact.
\end{proof}

\begin{Corollary}\label{minimum acheived} Let $R$ be an F-finite and F-pure ring and let $M$ be a finitely generated $R$-module. For $\ell \geq 0$, if $W_\ell(M)\not=\emptyset$, then the F-splitting ratio function defined by $r_F: \Spec(R)\to \R$ mapping $P\mapsto r_F(M_P)$ has a nonzero minimum value when restricted to $W_\ell(M)$.
\end{Corollary}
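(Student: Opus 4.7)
The plan is to combine the lower semi-continuity and quasi-compactness established in Theorem~\ref{lower semicontinuity of F-splitting ratio} with positivity of the local F-splitting ratio on $\fsl(M)$.

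First I would record the general topological fact that a lower semi-continuous function on a quasi-compact space attains its infimum. This is a short compactness argument: if $m = \inf_{P \in W_\ell(M)} r_F(M_P)$ is not attained, then the open cover $\{\{P : r_F(M_P) > c\}\}_{c > m}$ of $W_\ell(M)$ (open by lower semi-continuity) admits no finite subcover, contradicting the quasi-compactness of $W_\ell(M)$ proved in Theorem~\ref{lower semicontinuity of F-splitting ratio}. So there exists $P^* \in W_\ell(M)$ with $r_F(M_{P^*}) = m$.

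Next I would show $m > 0$. For any $P \in W_\ell(M)$ with $\ell \geq 0$ we have $\sr(M_P) = \ell \neq -1$, so by our convention $M_P$ admits a free summand after some iteration of Frobenius, which means $P \in \fsl(M)$. Applying Theorem~\ref{global r(M)}, we have
\[
r_F(M_P) \;=\; r_F(M_{\P(P)})\, r_F(R_P),
\]
where $\P(P)$ is the splitting prime of $R_P$. Since $R_P$ is F-pure (as $P \in \fsl(M) \subseteq \fsl(R)$), the positivity theorem of Blickle--Schwede--Tucker (Theorem~\ref{Splitting ratio is F-signature of a Cartier algebra}) gives $r_F(R_P) > 0$. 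Moreover, $\P(P) \in \fsl(M)$ as well (localizing a splitting of $F^e_*M_P$ at $\P(P)$ yields a splitting of $F^e_*M_{\P(P)}$), so Lemma~\ref{m-center} applied to the local ring $R_{\P(P)}$ with maximal splitting prime gives $r_F(M_{\P(P)}) > 0$. Therefore $r_F(M_P) > 0$ for every $P \in W_\ell(M)$, and in particular $m = r_F(M_{P^*}) > 0$.

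The main obstacle is really only bookkeeping: one needs to be sure that the quasi-compactness and lower semi-continuity in Theorem~\ref{lower semicontinuity of F-splitting ratio} genuinely yield the attainment of the infimum on the (possibly non-Hausdorff) subspace $W_\ell(M)$ of $\Spec(R)$, and that positivity of the local F-splitting ratio for modules propagates correctly from the splitting prime to all of $W_\ell(M)$ via the multiplicative formula in Theorem~\ref{global r(M)}. Both of these are short once the right results from the preceding sections are cited in the right order, so no new technical input should be required.
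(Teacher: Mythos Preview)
Your proposal is correct and follows essentially the same approach as the paper: lower semi-continuity plus quasi-compactness from Theorem~\ref{lower semicontinuity of F-splitting ratio} gives that the minimum is attained, and pointwise positivity then forces the minimum to be nonzero. The paper's proof is a single sentence recording only the first step, leaving positivity implicit; your explicit justification via the multiplicative formula $r_F(M_P)=r_F(M_{\P(P)})r_F(R_P)$ from Theorem~\ref{global r(M)} together with Lemma~\ref{m-center} is exactly the natural way to fill in that omission.
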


\begin{proof}
The function $r_F$ is lower semi-continuous when restricted to the non-empty quasi-compact set $W_\ell(M)$ and therefore attains a minimum value.
\end{proof}

The F-splitting ratio function is generally not a lower semi-continuous function when viewed as a function on the spectrum of a ring. We provide an example of such a ring, but first we need a lemma.

\begin{Lemma}\label{lemma on how to show F-splitting ratio not lsc} Let $(R,\m,k)$ be an F-finite and F-pure ring satisfying the following:
\begin{enumerate}
\item $R$ is F-pure;
\item $R$ is not strongly F-regular;
\item $R_P$ is strongly F-regular for all $P\not= \m$;
\item $R_P$ is not regular for some $P\not=\m$.
\end{enumerate}
Then the F-splitting ratio function $r_F: \Spec(R)\to \R$ is not lower semi-continuous.
\end{Lemma}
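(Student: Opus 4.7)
The plan is to show that $r_F$ fails to be lower semi-continuous at the closed point $\m$ by producing a prime $P \ne \m$ with $r_F(R_P) < r_F(R_\m)$. Since $\m$ is the unique closed point of $\Spec(R)$, the only open subset containing $\m$ is $\Spec(R)$ itself; hence lower semi-continuity at $\m$ would force $r_F(R_Q) \ge r_F(R_\m)$ for every $Q \in \Spec(R)$, and a single violating $P$ suffices.

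The heart of the argument is the identification $\P = \m$, where $\P$ denotes the splitting prime of the local ring $R$. By \cite[Remark~4.4]{Schwede2010}, $\P$ is the unique maximal center of F-purity of $R$, and a prime $Q \in \Spec(R)$ is a center of F-purity precisely when the splitting prime $\P(Q)$ of $R_Q$ equals $QR_Q$, i.e., when $\sdim(R_Q)=0$. For any $Q \ne \m$, hypothesis~(3) gives $\P(Q)=0$, so $Q$ is a center of F-purity if and only if $QR_Q=0$; since $R$ is reduced (because it is F-pure), this holds if and only if $Q \in \Min(R)$. Therefore the set of centers of F-purity of $R$ is contained in $\Min(R)\cup\{\m\}$. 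If $\m$ were not itself a center, the centers would all lie in the antichain $\Min(R)$; then either $|\Min(R)|\ge 2$ and no unique maximal center could exist (contradicting Schwede's result), or $R$ would be a domain and the unique maximal center would be $0$, forcing $\P=0$ and contradicting hypothesis~(2). So $\m$ must be a center of F-purity, and being the largest prime of $R$ it is the unique maximal one; hence $\P=\m$.

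With $\P=\m$ in hand, Lemma~\ref{m-center}(1) immediately yields $a_e(R)=p^{e\alpha(\m)}$ for every $e>0$, and therefore $r_F(R_\m)=r_F(R)=1$. On the other hand, by hypothesis~(4) there exists $P \ne \m$ for which $R_P$ is strongly F-regular but not regular; since $R_P$ is strongly F-regular one has $r_F(R_P)=\s(R_P)$, and since the equality $\s(R_P)=1$ is well known to characterize regularity among strongly F-regular F-finite local rings, we conclude that $r_F(R_P)=\s(R_P)<1=r_F(R_\m)$. This strict inequality is incompatible with lower semi-continuity of $r_F$ at $\m$, completing the argument.

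The main obstacle is the first step, the identification $\P=\m$: one has to translate the localization hypothesis (3) into a statement about centers of F-purity of the ambient ring $R$, and this relies on Schwede's characterization of $\P$ as the unique maximal center together with a correct reading of the definition of a center of F-purity. Once this is done, the computation $r_F(R)=1$ follows directly from Lemma~\ref{m-center}, and the desired failure of lower semi-continuity is a topological consequence of the fact that every open neighborhood of the closed point of $\Spec(R)$ is the whole spectrum.
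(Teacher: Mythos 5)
Your proof is correct and ends up proving the same key intermediate fact as the paper (namely that the splitting prime $\P$ of $R$ equals $\m$, hence $r_F(R)=1$), but you establish $\P = \m$ by a genuinely different route. The paper simply asserts that, since $R$ is not strongly F-regular, the closed set $V(\P)$ lies inside the non-strongly-F-regular locus; combined with hypothesis~(3), this forces $V(\P)=\{\m\}$, hence $\P=\m$. You instead invoke Schwede's characterization of $\P$ as the unique maximal center of F-purity, translate hypothesis~(3) into the statement that every non-maximal center of F-purity must be a minimal prime, and then run a case analysis on $|\Min(R)|$ to rule out the possibility that $\m$ fails to be a center. Your approach has the virtue of handling the non-domain case transparently: the paper's one-line assertion that $V(\P)$ lies in the non-strongly-F-regular locus is not literally true when $\P$ is a minimal prime (for then $R_\P$ is a field, hence strongly F-regular, yet $\P\in V(\P)$), and one needs exactly the case analysis you give to see that under hypotheses~(2) and~(3) the splitting prime cannot be a minimal prime. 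So your argument spells out a step the paper compresses. One small expository remark: in your case analysis you tacitly use that $\m\notin\Min(R)$; this is forced by hypothesis~(2) (if $\m$ were minimal then $R$ would be a field, which is strongly F-regular), and it would be worth saying so explicitly. Once $\P=\m$ is in hand, both proofs proceed identically: Lemma~\ref{m-center}(1) gives $r_F(R)=1$, and Huneke--Leuschke's theorem that $\s(R_P)=1$ characterizes regularity gives $r_F(R_P)=\s(R_P)<1$ for the $P$ furnished by hypothesis~(4), so lower semi-continuity at $\m$ fails.
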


\begin{proof}
For each $e\in \Z_{>0}$ let $I_e=\{r\in R\mid R\xrightarrow{\cdot F^e_*(r)}F^e_*R\mbox{ is not pure}\}$ be the $e$th splitting ideal of $R$ and set $\mathcal{P}=\bigcap_{e\in \Z_{>0}}I_e$. Recall that $\mathcal{P}$ is referred to as the splitting prime of $R$, and since $R$ is assumed to be not strongly F-regular, the closed set $V(\mathcal{P})$ is contained in the non-strongly F-regular locus of $R$. Therefore $\mathcal{P}=\m$ and it is straightforward to check that $a_e(R)=a_e(R/\m)=[k^{1/p^e}:k]$ for all $e$. In particular, $a_e(R)/p^{e\sr(R)}=1$ for all $e$ and therefore $r_F(R)=1$. However, localizing at a prime $P\not=\m$ for which $R_P$ is not regular it follows $R_P$ is strongly F-regular by assumption but not regular and therefore $r_F(R_P)=\s(R_P)<1$ by \cite[Corollary~16]{HunekeLeuschke} and therefore the F-splitting function is not lower semi-continuous.
\end{proof}

\begin{Theorem}\label{Example of non lsc F-splitting ratio} There exist an F-finite ring $R$ for which the F-splitting ratio function is not lower semi-continuous as a function from $\Spec(R)$ to $\R$.
\end{Theorem}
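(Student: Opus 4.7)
The plan is to apply Lemma~\ref{lemma on how to show F-splitting ratio not lsc}: it suffices to exhibit an F-finite local ring $(R,\m)$ of prime characteristic $p>0$ that is F-pure, is not strongly F-regular, has $R_P$ strongly F-regular for every non-maximal prime $P$, and has $R_P$ non-regular for some non-maximal $P$. Recall from the discussion preceding Theorem~\ref{Finitely many centers of F-purity} that in an F-finite F-pure local ring the splitting prime $\PP$ is the unique maximal center of F-purity, and localization-compatibility shows that the non-strongly-F-regular locus coincides with $V(\PP)$. Hence conditions $(1)$--$(3)$ of Lemma~\ref{lemma on how to show F-splitting ratio not lsc} collapse into the requirement that $R$ be F-pure with $\PP = \m$, while condition $(4)$ becomes the demand that the singular locus of $R$ strictly contain $\{\m\}$.

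The task therefore reduces to constructing an F-pure local ring with isolated non-strongly-F-regular behavior at $\m$ but a positive-dimensional singular locus of otherwise strongly F-regular (but not regular) points. I would look for such an $R$ among hypersurfaces $R = k[[x_1,\dots,x_n]]/(f)$ with $n \ge 4$, using the following template: $f = g(x_1,\dots,x_{n-1}) + x_n\,h(x_1,\dots,x_{n-1})$, where $g$ defines an isolated F-pure-but-non-strongly-F-regular Fermat-type singularity (for instance $g = x_1^3 + x_2^3 + x_3^3$ in characteristic $p \equiv 1 \pmod 3$), and $h$ is chosen so that on the locus $\{x_n \neq 0\}$ the polynomial $f$ acquires, after a change of coordinates, a strongly F-regular singularity (say of ADE or toric type in a suitable characteristic) along a one-dimensional stratum through the origin.

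The main technical work is twofold. First, one applies Fedder's criterion to verify both that $R$ is F-pure and that $\PP(R) = \m$; the latter amounts to controlling the terms of $f^{p-1}$ modulo $\m^{[p]}$ so that the Frobenius splits only modulo $\m$. Second, for the non-maximal singular prime $P$, one must verify that $R_P$ is strongly F-regular, for example by identifying $\widehat{R_P}$ with a strongly F-regular hypersurface of familiar type, or by producing an explicit splitting of $c \cdot F^e$ for a suitable test element $c \notin P$.

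The principal obstacle is reconciling conditions $(3)$ and $(4)$: the singular locus must be genuinely large while the non-strongly-F-regular locus stays pointwise. Naively adding smooth parameters to an F-pure-non-strongly-F-regular local ring $A$ (e.g., passing to $A[[t]]$) propagates the non-strongly-F-regular behavior along the whole $t$-axis, so the construction must truly couple the Fermat-type singularity at the origin with a transverse degeneration of a different geometric type. Once an appropriate $R$ is produced, the theorem follows immediately from Lemma~\ref{lemma on how to show F-splitting ratio not lsc}.
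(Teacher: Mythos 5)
The reduction you set up is exactly the one the paper uses: Lemma~\ref{lemma on how to show F-splitting ratio not lsc} reduces the theorem to producing an F-finite local ring that is F-pure, has splitting prime equal to $\m$, is strongly F-regular off $\m$, and is non-regular somewhere off $\m$. You also correctly diagnose the central difficulty, namely that naively thickening a known F-pure-not-strongly-F-regular isolated singularity (e.g.\ passing to $A[[t]]$) spreads the splitting prime out with it, so the non-strongly-F-regular locus fails to stay zero-dimensional.

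The gap is that this is where your argument stops: you do not actually produce a ring. The hypersurface template $f = g(x_1,\dots,x_{n-1}) + x_n\, h(x_1,\dots,x_{n-1})$ is a suggestion, not a construction --- no specific $h$ is given, Fedder's criterion is not applied, the splitting prime is not computed, and strong F-regularity at the nonmaximal singular primes is not checked. In fact, playing with your template quickly runs into the very tension you identify: if $h \in (x_1,\dots,x_{n-1})$, the stratum $V(x_1,\dots,x_{n-1})$ lies on the hypersurface but tends to be a locus of \emph{smooth} points (since $\partial f/\partial x_n = h$ need not vanish and the other partials pick up $x_n \partial_{x_j}h$ terms), while if $h$ vanishes to high enough order the splitting prime typically grows past $\m$. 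It is not obvious that the two requirements can be satisfied simultaneously inside a hypersurface, and you give no evidence that they can.

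The paper sidesteps this entirely by leaving the hypersurface world. It starts from a strongly F-regular, non-regular affine domain $A$ (e.g.\ $k[x_1,x_2,x_3]/(x_1^2 - x_2 x_3)$), sets $B = A[v]$, and glues two points of $\Spec(B)$ by taking $R = \{f \in B \mid f(0,\dots,0,0) = f(0,\dots,0,1)\}$. Then $B$ is the normalization of $R$, the conductor $\c$ is a maximal ideal, $R_\c$ is non-normal hence not strongly F-regular (forcing the splitting prime of $R_\c$ to be maximal), while $R_P \cong B_Q$ is strongly F-regular for every $P \neq \c$, with a positive-dimensional non-regular locus inherited from $A$. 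F-purity of $R$ is the one nontrivial verification, and the paper does it by explicitly building a splitting $j: F_*B \to B$ that restricts to $R$. This gluing construction buys a clean separation between "non-normal at exactly one point" (which pins down the non-strongly-F-regular locus) and "singular along a curve" (coming from $A$), which is exactly the decoupling your hypersurface approach struggles to achieve. To complete your proof you would either need to carry out and verify your hypersurface template concretely, or adopt a construction of this gluing type.
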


\begin{proof}
By Lemma~\ref{lemma on how to show F-splitting ratio not lsc} it is enough to show the existence of a non-strongly F-regular local F-pure ring which is strongly F-regular on the punctured spectrum but does not have isolated singularity. Let $k$ be a perfect field of prime characteristic $p$ and let $A$ be a non-regular strongly F-regular ring of finite type over $k$, write $A=k[x_1,\ldots,x_n]/I$, with $I \subseteq (x_1,\ldots,x_n)$, and assume $A_{(x_1,\ldots,x_n)}$ is a non-regular local ring. Let $B=A[v]$ and $R=\{f\in B\mid f(0,\ldots,0,0)=f(0,\ldots, 0,1)\}$. Observe that there are inclusions $A\subseteq R\subseteq B$, all of which are strict since $v^2-v\in R \smallsetminus A$ and $v\in B \smallsetminus R$. Moreover, $R\subseteq B$ is a finite and birational extension of domains. To see this notice that $B=R[v]$ and $v$ is integral over $R$ as $v^2-v\in R$ and the rings $R$ and $B$ have common fraction field since $vx_1,x_1\in R$ and $v=\frac{vx_1}{x_1}$. In particular, $B$ is the normalization of $R$ and the conductor ideal $\c=\Ann_R(B/R)$ defines the non-strongly F-regular locus of $R$ since $B$ is the normalization of $R$ and $B$ is strongly F-regular as polynomial extensions of strongly F-regular rings are strongly F-regular.

We now prove $\c$ is a maximal ideal of $R$. Let $\underline{x}=x_1,\ldots ,x_n$. As an ideal of $B$ there are inclusions $(\underline{x},v(v-1))\subseteq \c \subseteq B$. Therefore the conductor ideal must be one of the following ideals of $B$:
\begin{enumerate}
\item $(\underline{x},v(v-1))$;
\item $(\underline{x},v)$;
\item $(\underline{x},v-1)$;
\item $B$.
\end{enumerate}
Neither $v$ nor $v-1$ is an element of $\c$ and therefore $\c=(\underline{x},v(v-1))$. The $k$-algebra inclusion $R/\c \subseteq B/\c$ is strict, $B/\c$ is a $2$-dimensional $k$-vector space, and $R/\c$ is nonzero. Therefore $R/\c\cong k$ and thus $\c$ is a maximal ideal of $R$. 

We claim that the local ring $R_\c$ will satisfy all desired properties. The ring $R_\c$ is not normal and therefore not strongly F-regular. But if $\p\in \Spec(R) \smallsetminus \{\c\}$ then $R_\p$ is isomorphic to its normalization which is known to be strongly F-regular. In particular, if $\q$ is a prime ideal of $B$ and $\q\cap R$ is not the maximal ideal of $R$ then $R_{\q \cap R}\cong B_\q$. Let $P$ be the prime ideal $(\underline{x})$ of $B$. Then $A_{P\cap A}\subseteq B_P$ is a faithfully flat extension with regular fibers. Since $A_{P\cap A}$ is assumed to be non-regular, the local ring $B_P$ is also non-regular by \cite[Theorem~23.7]{Matsumura}. In particular, the ring $R$ does not have isolated singularity since $R_{P\cap R}\cong B_P$.

 It only remains to check that $R_\c$ is F-pure. To do so we further impose on our ring $A$ that $F_*A\cong A\cdot F_*1\oplus M$, where
 \[
 \ds M \subseteq \mbox{Span}_A\{F_*(x_1^{j_1}\cdots x_n^{j_n})\mid 0\leq j_\ell \leq p-1, \mbox{ and }(j_1,\ldots,j_n)\not=(0,\ldots ,0)\}.\footnote{For example we can take $A=k[x_1,x_2,x_3]/(x_1^2-x_2x_3)$.}
 \] 
Let $\varphi: F_*A\to A$ be the projection of $F_*A$ onto the first factor. Since $A\to B=A[v]$ is a polynomial extension there is $A[v]$-linear map $j_0:F_*B\to B$ such that $j_0(F_*v^i)=0$ whenever $1\leq i \leq p-1$, $j_0(F_*1)=1$, and $j_0|_{F_*A}=\varphi$. It is easy to see that $j_0$ is a splitting of the Frobenius map $B \to F_*B$. For each $1\leq i \leq p-1$, let $j_i: F_*B\to B$ be the $B$-linear map obtained by multiplying elements by $F_*v^{p-i}$, and applying $j_0$. That is, we have $j_i(F_*f)=j_0(F_*(v^{p-i}f))$ for all $F_*f \in F_*B$. Observe that $j_i(F_*v^{i})=v$ and $j_i(F_* v^{j})=0$ whenever $j\not= i$ and $1\leq j\leq p-1$.

Let $f\in B$. We claim that there are unique elements $f_0,\ldots, f_{p-1},h\in B$ such that the following hold:
\begin{enumerate}
\item $f=f_0^p+f_1^pv+\cdots +f^p_{p-1}v^{p-1}+h$;
\item $j_i(F_*f)=f_i$ for each $0\leq i\leq p-1$;
\item $F_*h\in \ker(j_i) $ for each $0\leq i\leq p-1$;
\item $h(0,\ldots, 0, v)=0$.
\end{enumerate}

Observe that if such an expression of $f$ exists then it is clearly unique by properties $(1)$ and $(2)$. To show the existence of such a expression let $f_i=j_i(F_*f)$ and $h=f-(f_0^p+f_1^pv+\cdots +f^p_{p-1}v^{p-1})$. Observe that $j_i(F_*(f_0^p+f_1^pv+\cdots +f^p_{p-1}v^{p-1}))=f_i$ and therefore $h\in \ker(j_i)$ for all $0\leq i\leq p-1$ and our choices of $f_0,\ldots, f_{p-1}$ and $h$ are easily seen to satisfy properties $(1), (2),$ and $(3)$. To see that $h(0,\ldots, 0,v)=0$ write $h=a_0+a_1v+\cdots +a_nv^n$ where each $a_i\in A$. Then $j_0(F_*h)=j_0(F_*a_0)+j_0(F_* a_p)v+j_{0}(F_*a_{2p})v^2+\cdots =0$. Hence $\varphi(F_*a_0)=\varphi(F_*a_p)=\varphi(F_*a_{2p})=\cdots=0$ and therefore $a_{0},a_p,a_{2p},\ldots$ are all elements of  $(x_1,\ldots, x_n)A$. A similar analysis of $j_i(F_*h)$, with $1\leq i\leq p-1$ will show $a_i,a_{p+i},a_{2p+i},\ldots$ are elements of $(x_1,\ldots, x_n)A$ and it easily follows that $h(0,\ldots ,0 ,v)=0$.

Let $j=j_0+j_1+\cdots + j_{p-1}$. We will show that $j:F_*B\to B$ is a $B$-linear map such that 
\begin{enumerate}
\item $j(F_*1)=1$ and 
\item $j(F_*R)\subseteq R$.
\end{enumerate}
In particular, we will have shown $R$ is an F-pure ring.  Given $f\in B$ there is a unique decomposition $f=f_0^p+f_1^pv+\cdots f_{p-1}^pv^{p-1}+h$ satisfying $(1)-(4)$ above. Recall that an element $f\in B$ is an element of $R$ if and only if $f(0,\ldots, 0 ,0)=f(0,\ldots, 0 ,1)$. If $f\in R$ then evaluating the decomposition of $f$ at the points $(0,\ldots,0,0)$ and $(0,\ldots, 0,1)$ shows that 
\[
f_0(0,\ldots, 0 ,0)=(f_0+f_1+\cdots+f_{p-1})(0,\ldots, 0,1).
\]
It is straightforward to check that $j(F_*f)=f_0+(f_1+\cdots + f_{p-1})v$. In particular, 
\[
j(F_*f)(0,\ldots,0,0)=f_0(0,\ldots,0,0)
\]
and
\[
 j(F_*f)(0,\ldots,0,1)=(f_0+f_1+\cdots +f_{p-1})(0,\ldots,0,1).
 \]
Therefore $j(F_*f)\in R$ and we have shown $R$ is F-pure which completes the proof.
\end{proof}

Our next goal is to prove existence of the global F-splitting ratio of a finitely generated module. To do so we recall the following:

\begin{Theorem}[{\cite{Stafford,DSPY2}}]\label{Stafford's Theorem} Let $R$ be a Noetherian ring of Krull dimension $d<\infty$ and $M$ a finitely generated $R$-module. If $\frk_{R_P}(M_P) \geq \dim(R/P) + k$ for all $P \in \Spec(R)$, then $\frk_R(M) \geq k$. In particular, $\frk_R(M) \geq \min\{\frk_{R_P}(M_P) \mid P \in \Spec(R)\} - d$.
\end{Theorem}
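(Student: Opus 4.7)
The ``in particular'' clause is immediate from the main assertion, so the plan is to establish the first statement. Indeed, setting $m = \min\{\frk_{R_P}(M_P) \mid P \in \Spec(R)\}$, for every prime $P$ one has $\frk_{R_P}(M_P) \geq m = (m-d)+d \geq (m-d) + \dim(R/P)$, so the main statement applied with $k = m-d$ yields $\frk_R(M) \geq m-d$.

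I would then proceed by induction on $k$, exploiting the fact that $\frk_R(M) \geq k$ is equivalent to the existence of a surjection $M \twoheadrightarrow R^k$ (automatically split since $R^k$ is projective). The base case $k=0$ is trivial. For the inductive step, assume the result for $k-1$ and extract a free summand to obtain $M \cong R^{k-1} \oplus N$. A dual-basis argument at each local ring $R_P$ (pair a dual basis of a maximal free summand of $M_P$ with functionals in $M_P^{*}$, and analyze what survives after projecting to the $N_P$ factor) shows that $\frk_{R_P}(N_P) \geq \frk_{R_P}(M_P) - (k-1) \geq \dim(R/P) + 1$. Thus $N$ satisfies the hypothesis with $k$ replaced by $1$, reducing the entire problem to the case $k = 1$: produce a surjection $M \twoheadrightarrow R$ under the assumption $\frk_{R_P}(M_P) \geq \dim(R/P) + 1$ for every $P$.

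For the $k=1$ case, I would run a basic-element construction in the dual module $M^{*} = \Hom_R(M,R)$. At each prime $P$, the hypothesis guarantees a local surjection $M_P \twoheadrightarrow R_P^{\dim(R/P)+1}$, so there is no shortage of local functionals in $M_P^{*}$ whose image is all of $R_P$. The task is to glue these into a single global $\varphi \in M^{*}$ with $\varphi_P(M_P) = R_P$ at every $P$. Following the Eisenbud--Evans / Stafford technique, the idea is to build $\varphi$ by iterative corrections $\varphi \mapsto \varphi + r\psi$, using prime avoidance at each stage to strictly decrease the dimension of the locus of primes where $\varphi$ fails to be surjective. The slack $\dim(R/P)$ in the hypothesis is precisely what makes room for such corrections to exist.

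The hard part will be executing this basic-element / prime-avoidance step rigorously: one must show that at each stage one can choose a correction which kills a top-dimensional bad prime without re-introducing failures at primes already handled. This is the technical core of Stafford's argument, and is where the dimension condition enters in an essential way. Once the $k=1$ case is in hand, the induction on $k$ completes the proof, and the ``in particular'' assertion follows from the opening observation.
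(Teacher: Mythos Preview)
The paper does not prove this theorem at all: it is quoted as an external result with citation to Stafford and to \cite{DSPY2}, and is then used as a black box in the proof of Theorem~\ref{Global F-splitting ratio exists}. So there is no ``paper's own proof'' to compare your proposal against.

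As for your sketch itself, the overall architecture (reduce to $k=1$ by peeling off one free summand at a time, then run a basic-element/prime-avoidance argument in $M^*$) is indeed the standard route to Stafford-type results, and your derivation of the ``in particular'' clause from the main statement is correct. But what you have written is an outline rather than a proof: you explicitly flag the core step (globalizing the local surjections via iterated corrections $\varphi \mapsto \varphi + r\psi$) as ``the hard part'' without carrying it out, and your inductive reduction also needs the inequality $\frk_{R_P}(N_P) \geq \frk_{R_P}(M_P) - (k-1)$ justified carefully (it is true, but the dual-basis sketch you give is vague). If you want an actual proof, you would need to supply the Eisenbud--Evans/Stafford machinery in full; the paper simply takes the result for granted.
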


Suppose that $R$ is a Noetherian ring, $M$ a finitely generated $R$-module, and $R\to M$ an $R$-module homomorphism. Recall that $R\to M$ splits if and only if the induced map $\Hom_R(M,R)\to \Hom_R(R,R)$ is onto. But a homomorphism of finitely generated $R$-modules being onto is a local condition. Therefore if $R$ is F-finite and $R\to F_*R$ the Frobenius map, then $R\to F_*R$ splitting can be checked locally.

\begin{Proposition} \label{proposition globalVSlocal F-pure} Let $R$ be an F-finite ring. Then $R$ is F-pure if and only if $R_P$ is F-pure for all $P \in \Spec(R)$.
\end{Proposition}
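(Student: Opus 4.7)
The plan is to reduce the question to the surjectivity of a specific homomorphism of finitely generated modules, and then invoke the standard local-to-global principle for surjectivity. Since $R$ is F-finite, the Frobenius $F \colon R \to F_*R$ is finite, and in this setting purity is equivalent to splitting. Thus $R$ is F-pure if and only if the evaluation-at-$1$ map
\[
\mathrm{ev}_1 \colon \Hom_R(F_*R, R) \longrightarrow \Hom_R(R,R) \cong R, \qquad \varphi \longmapsto \varphi(1),
\]
is surjective, since a preimage of $1$ under $\mathrm{ev}_1$ is precisely a splitting of $F \colon R \to F_*R$. The same reasoning applied to $R_P$ shows that $R_P$ is F-pure if and only if the analogous evaluation map for $R_P \to F_*(R_P)$ is surjective.

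Next I would check that these two conditions are compatible under localization. Because $R$ is F-finite, $F_*R$ is a finitely generated $R$-module, and there is a natural isomorphism $\Hom_R(F_*R, R)_P \cong \Hom_{R_P}((F_*R)_P, R_P) \cong \Hom_{R_P}(F_*(R_P), R_P)$, under which $\mathrm{ev}_1$ localizes to the corresponding evaluation-at-$1$ map for $R_P$. Therefore $R_P$ is F-pure precisely when $(\mathrm{ev}_1)_P$ is surjective.

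Now the proof reduces to a purely module-theoretic fact: a homomorphism $f \colon M \to N$ of $R$-modules with $N$ (and here also $M$) finitely generated is surjective if and only if $f_P$ is surjective for every $P \in \Spec(R)$, equivalently for every maximal ideal. Applying this to $\mathrm{ev}_1$, with $N = R$ finitely generated, I would conclude that $\mathrm{ev}_1$ is surjective if and only if $(\mathrm{ev}_1)_P$ is surjective for every $P \in \Spec(R)$, which by the previous step is equivalent to $R_P$ being F-pure for every $P$.

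There is no serious obstacle here; the only point requiring a brief justification is the compatibility of the Hom module with localization, which uses the F-finiteness hypothesis to ensure $F_*R$ is a finitely presented $R$-module. With that in place, the argument is essentially a one-line appeal to the local-to-global principle for surjectivity, as already suggested in the paragraph preceding the proposition.
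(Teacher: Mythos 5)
Your proof is correct and takes essentially the same route as the paper: the paper's argument (given in the paragraph immediately preceding the Proposition) likewise reduces F-purity to surjectivity of the evaluation map $\Hom_R(F_*R,R)\to\Hom_R(R,R)$ and invokes the fact that surjectivity of a map onto a finitely generated module is a local condition. You merely spell out the compatibility of $\Hom$ with localization (using that $F_*R$ is finitely presented), which the paper leaves implicit.
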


The following is the main theorem of this section. It shows the existence of the global F-splitting ratio, and relates it to the F-splitting ratio of the localization at prime ideals.
\begin{Theorem}\label{Global F-splitting ratio exists} Let $R$ be an F-finite ring of prime characteristic $p>0$ and $M$ a finitely generated module. Then
\begin{enumerate}
\item We have $\displaystyle \sr(M)=\min\{\sr(M_P) \mid P \in \Spec(R)\}$.

\item The limit $\ds r_F(M)=\lim\limits_{e\to \infty}\frac{a_e(M)}{p^{e\sr(M)}}$ exists, and it is positive if $\sr(M)\geq 0$.
\item We have $\displaystyle r_F(M)=\min\{r_F(M_P)\mid \sr(M_P)=\sr(M)\}$. In particular, $r_F(M)$ is positive whenever there exists $e\in\Z_{>0}$ and onto $R$-linear map $F^e_*M\to R$.
\item If $\sr(R)=0$, then the sequence $\{a_e(R)\}$ is the constant sequence $\{1\}$. Therefore, we have $r_F(R)=1$. 
\item If $\sr(M)=0$ then the sequence $\{a_e(M)\}$ is a non-decreasing sequence of eventually positive integers bounded from above, hence is eventually the constant sequence $\{r_F(M)\}$.
\end{enumerate}
\end{Theorem}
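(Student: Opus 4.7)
The plan is to combine the uniform convergence of Theorem~\ref{global r(M)}, the minimum-attained statement of Corollary~\ref{minimum acheived}, and Stafford's Theorem~\ref{Stafford's Theorem} to transfer information between localized and global splitting numbers. First, since localization preserves free summands, $a_e(M) \leq a_e(M_P)$ for every $P \in \Spec(R)$ and every $e$, yielding the easy inequality $\sr(M) \leq \ell_0 := \min_P \sr(M_P)$. If $\ell_0 = -1$ the other assertions hold by the stated conventions, so I assume $\ell_0 \geq 0$; then every $R_P$ is F-pure (picking a surjection $R_P^n \twoheadrightarrow M_P$, a free summand of $F^e_* M_P$ lifts to a free summand of $(F^e_*R_P)^n$, which forces one of $F^e_*R_P$ via Krull--Schmidt over the local ring $R_P$), and $R$ itself is F-pure by Proposition~\ref{proposition globalVSlocal F-pure}.

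When $\ell_0 \geq 1$, both the reverse inequality and the value of $r_F(M)$ will follow by a squeeze. Theorem~\ref{global r(M)} supplies a uniform bound $|a_e(M_Q) - p^{e\sr(M_Q)} r_F(M_Q)| \leq C p^{e(\sr(M_Q)-1)}$, while Corollary~\ref{minimum acheived} delivers $r_{\min} := \min\{r_F(M_P) : \sr(M_P) = \ell_0\} > 0$. Stratifying $\Spec(R) = \bigcup_\ell W_\ell(M)$, these give $\min_Q a_e(M_Q) \geq p^{e\ell_0} r_{\min} - C p^{e(\ell_0-1)}$ for $e$ large. Applying Stafford's Theorem to $F^e_*M$, $a_e(M) \geq \min_Q a_e(M_Q) - \dim R$, and dividing by $p^{e\ell_0}$ the Stafford correction $\dim R/p^{e\ell_0}$ and the error $C/p^e$ both vanish as $e \to \infty$. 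Squeezing against the upper bound $a_e(M)/p^{e\ell_0} \leq a_e(M_{P_0})/p^{e\ell_0}$ for $P_0$ realizing $r_{\min}$ produces $\lim a_e(M)/p^{e\ell_0} = r_{\min}$, establishing (1)--(3) in this range.

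The delicate case is $\ell_0 = 0$, where dividing by $p^{e\ell_0} = 1$ keeps the Stafford error in play. I will first establish (4) and (5). For (4), any $P$ with $\sr(R_P) = 0$ has $\P(P) = P$ and $\kappa(P)$ perfect, so Lemma~\ref{m-center}(1) gives $a_e(R_P) = 1$; combined with $a_e(R) \geq 1$ from F-purity and $a_e(R) \leq a_e(R_P) = 1$ from localization, this forces $a_e(R) = 1$ for every $e$. For (5), if $\sr(M) = 0$ and $P_0$ attains $\sr(M_{P_0}) = 0$, then $\sr(R_{P_0}) = 0$ by Lemma~\ref{splitting ratio of a finitely generated module}, so (4) yields $a_1(R) = 1$; writing $F^e_*M = R^{a_e(M)} \oplus N_e$ and applying $F_*$, the identity $F^{e+1}_*M \cong (F_*R)^{a_e(M)} \oplus F_*N_e$ gives $a_{e+1}(M) \geq a_e(M)$, so $\{a_e(M)\}$ is non-decreasing and bounded above by $a_e(M_{P_0})$, hence eventually constant.

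The hardest step will be identifying this stable value as $r_{\min} = \min\{r_F(M_P) : \sr(M_P) = 0\}$. The upper bound $r_F(M) \leq r_{\min}$ follows from localization, but the lower bound $r_F(M) \geq r_{\min}$ requires producing a global free summand of $N_e$ from the observation that $(N_e)_P$ must carry a positive local free summand at every $P \in \Spec(R)$ whenever $r_F(M) < r_{\min}$. I plan to emulate the composition-of-maps technique from the proof of Theorem~\ref{global r(M)}: for each center of F-purity $\P$ contributing to $W_0(M)$, construct maps $R^{\oplus a} \to F^{e_0}_*N_e \to R^{\oplus a}$ whose composition is multiplication by some $c \notin \P$, and then globalize using the finiteness of such centers. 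A resulting global free summand of $N_e$ would contradict the maximality of $r_F(M)$, closing the proof.
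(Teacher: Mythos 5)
Your plan tracks the paper closely on the easy direction, the $\ell_0 \geq 1$ squeeze argument (Stafford, Theorem~\ref{global r(M)}, Corollary~\ref{minimum acheived}), and parts~(4)--(5): these steps are all essentially the paper's proof, modulo a cosmetic difference in the upper bound for (5), where the paper bounds $a_e(M)$ by $N$ for any surjection $R^{\oplus N}\twoheadrightarrow M$ via part~(4) rather than by $a_e(M_{P_0})$ (either works).

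The gap is in the last step, the lower bound $r_F(M) \geq \min\{r_F(M_P): \sr(M_P)=0\}$ when $\ell_0=0$. First, the premise as you state it is off: it is \emph{not} true that $(M_e)_P$ has a free summand at every $P \in \Spec(R)$ under the hypothesis $r_F(M) < r_{\min}$. What the paper establishes is far more limited and tailored: writing $\{P_1,\dots,P_s\}$ for the finitely many \emph{maximal} centers of F-purity, $(M_e)_{P_i}$ has a free summand for large $e$ at each $P_i$, for two separate reasons depending on whether $\sr(M_{P_i})>0$ (else $a_e(M_{P_i})$ would stabilize, forcing $\sr(M_{P_i})=0$) or $\sr(M_{P_i})=0$ (here the contradiction hypothesis $r_F(M)<r_F(M_{P_i})$ is used). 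Second, and more seriously, ``construct maps $R^{\oplus a}\to F^{e_0}_*M_e \to R^{\oplus a}$ composing to $c \cdot \id$ and globalize using finiteness of centers'' is not an argument for producing a global free summand. The composition-of-maps technique in Theorem~\ref{global r(M)}, via Lemma~\ref{mult-c}, yields inequalities for the numbers $a_{e_0+e}(M_Q)$ over a single stratum $\mathcal{Q}(\P)$; it does not, by itself, produce a free $R$-module direct summand of $F^{e'}_*M_e$. The paper's actual mechanism is different and specific: choose $m_i\in M_e$ and $h_i\in \Hom_R(M_e,R)$ with $h_i(m_i)\notin P_i$, combine via prime avoidance elements $r_i\in\bigl(\bigcap_{j\ne i}P_j\bigr)\smallsetminus P_i$ to form $h=\sum r_i h_i$, $m=\sum r_i m_i$, and observe $x:=h(m)\notin \bigcup_i P_i$, so that $x$ avoids \emph{every} center of F-purity. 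Then the open sets $U_{e'}=\{Q : R_Q \xrightarrow{\cdot F^{e'}_*x} F^{e'}_*R_Q \text{ splits}\}$ form an ascending chain covering $\Spec(R)$; quasi-compactness gives $U_{e'}=\Spec(R)$ for some $e'$, hence a global $\varphi\in\Hom_R(F^{e'}_*R,R)$ with $\varphi(F^{e'}_*x)=1$; and the composition $F^{e'}_*M_e \xrightarrow{F^{e'}_*h} F^{e'}_*R\xrightarrow{\varphi}R$ sends $F^{e'}_*m\mapsto 1$, producing the forbidden global free summand of $F^{e'}_*M_e$. This prime-avoidance-plus-quasi-compactness argument is the heart of the $\ell_0=0$ case and is not reproduced by your sketch.
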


\begin{proof}

If there exists a prime ideal $P\in \Spec(R)$ such that $a_e(M_P)=0$ for all $e\in \Z_{>0}$, i.e., if $W_{-1}(M)\not=\emptyset$, then all statements of the theorem trivially follow, and we have $r_F(M)=0$.


For the remainder of the proof, we assume that $W_{-1}(M) = \emptyset$. Since $a_e(M)\leq a_e(M_P)$ for all $P\in\Spec(R)$, it easily follows that $\sr(M) \leq \min\{\sr(M_P)\mid P\in \Spec(R)\}$.

First, assume that $\min\{\sr(M_P) \mid P \in \Spec(R)\}>0$. For each $e\in \Z_{>0}$, we let $P_e\in \Spec(R)$ be such that $a_e(M_{P_e})=\min\{a_e(M_P)\mid P\in \Spec(R)\}$. If we set $d=\dim (R)$, it follows from Theorem~\ref{Stafford's Theorem} that $a_e(M)\geq a_e(M_{P_e})-d$. Let $C$ be as in Theorem~\ref{global r(M)}, and let $r=\min\{r_F(M_P)\mid P\in \Spec(R)\}$. Such an $r$ exists, and is positive by Corollary~\ref{minimum acheived}. In particular, we have
\begin{align*}
a_e(M)\geq a_e(M_{P_e})-d &\geq r_F(M_{P_e})p^{e\sr(M_{P_e})}-Cp^{e(\sr(M_{P_e})-1)}-d \\ &\geq rp^{e\sr(M_{P_e})} - Cp^{e(\sr(M_{P_e})-1)}-d.
\end{align*}
Since $\sr(M_{P_e})>0$, it follows that $a_e(M)\geq \frac{rp^e}{2}$ for all $e \gg 0$, and therefore $\sr(M)>0$. Moreover, 
we have that $\sr(M_{P_e})>\sr(M)$ only for finitely many values of $e$. Else, from the inequalities above we would get
\[
\ds r \leq \frac{a_e(M)}{p^{e\sr(M_{P_e})}} + \frac{C}{p^e} + \frac{d}{p^{e\sr(M_{P_e})}} \leq \frac{a_e(M)}{p^{e(\sr(M)+1)}} + \frac{C}{p^e} + \frac{d}{p^{e(\sr(M)+1)}},
\]
for infinitely many values of $e$. Because $\sr(M) > 0$, the expression on the right hand side can be made arbitrarily close to $0$ for $e \gg 0$, contradicting the fact that $r>0$. Therefore we have $\sr(M_{P_e})=\sr(M)$ for all $e \gg 0$ and, in particular, this gives the reverse inequality $\sr(M) \geq \min\{\sr(M_P)\mid P\in \Spec(R)\}$. This finishes the proof of $(1)$ under the assumption that $\min\{\sr(M_P)\mid P\in\Spec(R)\}>0$.

Continue to assume that $\ell=\sr(M)>0$, and let $P_e\in \Spec(R)$ be as above.  We have already observed that $\sr(M_{P_e})=\ell$ for all $e \gg 0$. Moreover, there are inequalities
\[
\frac{a_e(M_{P_e})-d}{p^{e\ell}}\leq \frac{a_e(M)}{p^{e\ell}}\leq \frac{a_e(M_{P_e})}{p^{e\ell}}.
\]
Under the assumption that $\ell>0$, parts $(2)$ and $(3)$ follow if $\lim\limits_{e\to \infty }\frac{a_e(M_{P_e})}{p^{e\ell}}$ exists and is equal to $\min\{r_F(M_{P})\mid \sr(M_P)=\ell\}$. But this is indeed the case since the F-splitting ratio function restricted to the quasi-compact set $W_\ell(M)=\{P\in\Spec(R)\mid \sr(M_P)=\ell\}$ is the uniform limit of the lower semi-continuous functions $\frac{a_e(-)}{p^{e\ell}}$. In particular, the minimum the functions $\frac{a_e(-)}{p^{e\ell}}$ on $W_\ell(M)$ converges to the minimum of the F-splitting ratio functions on $W_\ell(M)$, using an argument completely analogous to the one used in the proofs of Proposition~\ref{uniform limit chi} and Theorem~\ref{global betas}. This proves $(2)$ and $(3)$ under the assumption that $\min\{\sr(M_P)\mid P\in\Spec(R)\}>0$.

Now we prove $(4)$, so we assume $\sr(R)=0$. By what we have shown above, we must necessarily have $0 = \sr(R) \leq \min\{\sr(R_P) \mid P \in \Spec(R)\} \leq 0$, and thus $\sr(R_P) = 0$ for some $P \in \Spec(R)$. Observe that we have $\sr(R_P) \geq \alpha(P)$, with equality if and only if $PR_P$ is the splitting prime of $R_P$. Thus, $\sr(R_P) = 0$ implies that $PR_P$ is the splitting prime of $R_P$, and that $\kappa(P)$ is perfect. It follows from Lemma~\ref{m-center} and Proposition~\ref{proposition globalVSlocal F-pure} that $1\leq a_e(R)\leq a_e(R_P)=1$ for all $e\in\Z_{>0}$, and therefore $a_e(R)=1$ for all $e\in \Z_{>0}$. This proves $(4)$. 

Now suppose that $\min\{\sr(M_P)\mid P\in\Spec(R)\}=0$. Let $P\in \Spec(R)$ be such that $\sr(M_P)=0$. By Lemma~\ref{splitting ratio of a finitely generated module}, this also gives $\sr(R_P)=\sr(R)=0$. To prove $(5)$, we choose for each $e\in \Z_{>0}$ a direct sum decomposition $F^e_*M\cong R^{\oplus a_e(M)}\oplus M_e$. Then 
\[
F^{e+1}_*M\cong F_*R^{\oplus a_e(M)}\oplus F_*M_e.
\]
As $R$ is F-pure, $F^e_*R^{\oplus a_e(M)}$ has a free summand of rank $a_e(M)$, and therefore $a_{e+1}(M)\geq a_e(M)$. To see that the sequence $\{a_e(M)\}$ is bounded from above, choose an onto map $R^{\oplus N} \to M$. By part $(4)$, the condition that $\sr(R) =0$ implies that $a_e(R^{\oplus N})=N$ for each $e\in \Z_{>0}$, and therefore $a_e(M)\leq N$ for all $e\in \Z_{>0}$. We have now proven, under the assumption that $\min\{\sr(M_P)\mid P\in\Spec(R)\}=0$, that the sequence $\{a_e(M)\}$ is a non-decreasing sequence of non-negative integers, and is therefore eventually the constant sequence $\{r_F(M)\}$. 

To complete the proof it is enough to show that $r_F(M)=\min\{r_F(M_{P})\mid \sr(M_{P})=0\}$, which concludes $(5)$. Moreover, since $\min\{r_F(M_{P})\mid \sr(M_{P})=0\}>0$ by Corollary~\ref{minimum acheived}, this also implies $a_e(M) = r_F(M)>0$ for $e \gg 0$. Hence $\sr(M)=0$, which concludes the proof of parts $(1)$, $(2)$, and $(3)$.

Let $\{P_1,\ldots ,P_s\}$ be the set of maximal objects, with respect to containment, of the set of all centers of F-purity of $R$. We refer to them as the maximal centers of F-purity of $R$. We may assume that $\sr(M_{P_i})=0$ for all $1 \leq i \leq r$, and $\sr(M_{P_i}) > 0$ for $r+1 \leq i \leq s$. From what shown above, we know that for all $e\gg0$ we have $F^e_*M\cong R^{\oplus r_F(M)}\oplus M_e$, where $F^{e'}_*M_e$ does not have a free summand for all $e'\geq 0$.
We claim that $\frk((M_e)_{P_i})\geq 1$ for all $r+1\leq i \leq s$. To see this, 
we assume by contradiction that, for some $r+1\leq i\leq s$, we have $\frk((M_e)_{P_i})=0 $ for infinitely many $e \in \Z_{>0}$. Then the splitting rate of $M_{P_i}$ would be $0$, and this contradicts our arrangement of the maximal centers of F-purity of $R$. 

Suppose $r_F(M)<\min \{r_F(M_P)\mid \sr(M_P)=0\}$. Then $\frk((M_{e})_{P_i})>0$ for each $1\leq i \leq r$, and $e \gg 0$. Then for each $1\leq i\leq s$ we can find $m_i\in M_e$ and $h_i\in \Hom_R(M_e, R)$ such that $h_i(m_i)\not \in P_i$. By prime avoidance we can find for each $1\leq i\leq s$ an element $r_i\in \left(\bigcap_{j\not = i}P_j\right) \smallsetminus P_i$. Let $m=\sum r_i m_i$ and $h=\sum r_i h_i$. Then $x:= h(m)=\sum \sum r_ir_jh_i(m_j)\not\in \bigcup _{i=1}^s P_i$. Therefore the element $x$ avoids all maximal centers of F-purity of $R$, hence all centers of F-purity of $R$. In particular, if $Q\in \Spec(R)$, then there exists $e_Q\in \Z_{>0}$ such that $R_Q\xrightarrow{\cdot F^{e'}_* x}F^{e'}_*R_Q$ splits for all $e'\geq e_Q$. Therefore, the union of the sets $U_{e'}:=\{Q\in \Spec(R)\mid R_Q\xrightarrow{\cdot F^{e'}_* x}F^{e'}_*R_Q\mbox{ splits}\}$ is equal to $\Spec(R)$. Moreover, they are open sets, and they form an ascending chain \cite{HochsterHuneke}. By quasi-compactness of $\Spec(R)$, there exists $e'\in\Z_{>0}$ such that $U_{e'}=\Spec(R)$. Therefore $R\xrightarrow{F^{e'}_*x}F^{e'}_*R$ splits, since splitting of is a local condition. Suppose $\varphi:F^{e'}_*R\to R$ satisfies $\varphi(F^{e'}_*x)=1$. Then, the composition $F^{e'}_*M_e\xrightarrow{F^{e'}_*h} F^{e'}_*R\xrightarrow {\varphi }R$ maps $F^e_*h\mapsto 1$, and this contradicts the property that $F^{e'}_*M_e$ does not have a free $R$-summand for all $e'\geq 0$. This completes the proof.
\end{proof}

As in Proposition \ref{prop graded betti} for Frobenius Betti numbers, we show that the F-splitting ratio of a positively graded algebra is equal to the F-splitting ratio at the irrelevant maximal ideal.
\begin{Proposition} \label{prop graded r_F} Let $(R_0,\m_0,k)$ be an F-finite local ring and let $R$ be a positively graded algebra of finite type over $R_0$. Let $R_{>0}$ be the ideal of $R$ generated by elements of positive degree and $\m=\m_0+R_{>0}$. Suppose that $M$ is a finitely generated graded $R$-module. We have the equality $a_e(M) = a_e(M_\m)$. In particular, we have $\sr(M) = \sr(M_\m)$, and $r_F(M) = r_F(M_\m)$.
\end{Proposition}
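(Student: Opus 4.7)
The plan is to mirror the proof of Proposition~\ref{prop graded betti} by exploiting the natural $\frac{1}{p^e}\Z_{\geq 0}$-grading on $F^e_*M$ to compare the local and global splitting numbers directly. It is enough to establish the equality $a_e(M)=a_e(M_\m)$ for every $e\in\Z_{>0}$, since the asserted equalities for $\sr(M)$ and $r_F(M)$ follow at once from their definitions. The inequality $a_e(M)\leq a_e(M_\m)$ is immediate, because any $R$-free summand of $F^e_*M$ localizes to an $R_\m$-free summand of $F^e_*(M_\m)$.

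For the reverse inequality, I will establish the following key lemma: whenever $N$ is a finitely generated graded $R$-module such that $N_\m$ admits a nonzero $R_\m$-free summand, then $N$ itself admits a graded free $R$-summand. Granted this, the conclusion is reached by iteration: apply the key lemma to $F^e_*M$ to extract a graded summand $R[d_1]$; its graded complement $N_1$ is again finitely generated and graded, and satisfies $\frk_{R_\m}((N_1)_\m)=a_e(M_\m)-1$. Continuing for $a_e(M_\m)$ steps produces a graded free summand of $F^e_*M$ of total rank $a_e(M_\m)$, which is in particular an ungraded $R$-free summand of the same rank, giving $a_e(M)\geq a_e(M_\m)$.

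The proof of the key lemma goes as follows. Fix $\varphi\in\Hom_{R_\m}(N_\m,R_\m)$ and $x\in N_\m$ with $\varphi(x)=1$. Since $N$ is finitely presented over the Noetherian ring $R$, one has $\Hom_{R_\m}(N_\m,R_\m)=\Hom_R(N,R)_\m$, so after clearing denominators I obtain $\psi\in\Hom_R(N,R)$, $y\in N$, and $c\in R\setminus\m$ with $\psi(y)=c$. Because $N$ is finitely generated and graded, $\Hom_R(N,R)=\bigoplus_d\Hom_R(N,R)_d$ is itself a graded $R$-module, with $\psi=\sum_d\psi_d$ a finite sum of graded components; correspondingly write $y=\sum_n y_n$ in homogeneous pieces. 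The assumption $c\notin\m=\m_0+R_{>0}$ forces the degree-$0$ component $c_0$ of $c$ to lie in $R_0\setminus\m_0$, hence to be a unit of $R_0$. Equating degree-$0$ parts of $\psi(y)=c$ yields $\sum_n\psi_{-n}(y_n)=c_0$ in $R_0$, and since $R_0$ is local, at least one term $\psi_{-n_0}(y_{n_0})$ must itself be a unit. Rescaling $\psi_{-n_0}$ by the inverse of this unit produces a graded map $\psi'\colon N\to R$ of degree $-n_0$ with $\psi'(y_{n_0})=1$, and the inclusion $R\to N$ sending $1\mapsto y_{n_0}$ is then split by $\psi'$, yielding the desired graded free summand of $N$.

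The main obstacle is the key lemma itself, whose success relies critically on the interplay between the positive grading of $R$ and the locality of $R_0$: any element of $R\setminus\m$ automatically has a degree-$0$ component that is a unit in $R_0$, and this feature is what allows an a priori inhomogeneous splitting available over the localization $R_\m$ to be promoted to a genuine graded splitting over $R$. Nothing comparable would hold if $R_0$ were not local or the grading were not positive.
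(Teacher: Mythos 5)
Your proposal is correct and takes essentially the same approach as the paper's proof, just organized as an iteration rather than in one step. The paper fixes a maximal graded decomposition $F^e_*M \cong \bigoplus_{i=1}^{b_e} R[n_i] \oplus M_e$ with $M_e$ having no graded free summand, and shows $(M_e)_\m$ has no free summand; the crucial step there — lifting a splitting over $R_\m$ to a graded one — is exactly your key lemma, for which you give a more explicit account (passing to the degree-zero component of $\psi(y)=c$ and using that $R_0$ is local to locate a unit $\psi_{-n_0}(y_{n_0})$) than the paper's terse ``we can find a graded component $\psi$ of $\varphi$ that still satisfies $\psi(M_e)\not\subseteq\m$.'' Your iteration implicitly uses additivity of free rank over direct sums over the local ring $R_\m$, which is standard (it follows from $I(A\oplus B)=I(A)\oplus I(B)$ for the non-split submodule), and is the same additivity the paper uses implicitly to identify $a_e(M_\m)$ with $b_e$.
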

\begin{proof}
Since $a_e(M) \leq a_e(M_\m)$ always holds, it is sufficient to prove the other inequality. To this end, we observe that $F^e_*M$ is a $\Q$-graded module. Hence, we can find a graded isomorphism $F^e_*M \cong \bigoplus_{i=1}^{b_e} R[n_i] \oplus M_e$, where $n_i \in \Q$, and $M_e$ is a $\Q$-graded module with no graded free summands. Here, $R[n_i]$ denotes the cyclic $\Q$-graded free module whose generator is in degree $-n_i$. We claim that $(M_e)_\m$ has no free summands either. In fact, if it did, there would be a surjective $R_\m$-linear map $(M_e)_\m \to R_\m$. Such a map lifts to an $R$-linear map $\varphi: M_e \to R$ with $\varphi(M_e) \not\subseteq \m$. Since $\Hom_R(M_e,R)$ is a graded module, we can find a graded component $\psi$ of $\varphi$ that still satisfies $\psi(M_e) \not\subseteq \m$. Such a map $\psi$ gives rise to a graded free summand of $M_e$, contradicting our assumptions. This shows that $a_e(M) \geq b_e = a_e(M_\m)$, as claimed.
\end{proof}
\begin{Corollary} Let $R$ and $\m$ be as in Proposition~\ref{prop graded r_F}. 
We have $\s(R) = \s(R_\m)$ .
\end{Corollary}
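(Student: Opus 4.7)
The plan is to mirror the strategy used in the proof of the corollary right after Proposition~\ref{prop graded betti}, which established the analogous statement for Hilbert--Kunz multiplicity. Recall from \cite{DSPY} that the global F-signature is defined as $\s(R) = \lim_{e\to\infty} a_e(R)/p^{e\gamma(R)}$. Since $a_e(R) = \frk_R(F^e_*R)$ and the same formula defines $\s(R_\m)$ with $a_e(R_\m) = \frk_{R_\m}(F^e_*(R_\m))$, the two invariants differ only in the numerator (a splitting number) and the denominator (a normalization depending on $\gamma$).

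For the numerators, I would invoke Proposition~\ref{prop graded r_F}, which gives $a_e(R) = a_e(R_\m)$ for every $e \in \Z_{>0}$. The argument there built a graded decomposition $F^e_*R \cong \bigoplus_{i=1}^{b_e} R[n_i] \oplus R_e$ with $R_e$ having no graded free summand, and showed that $(R_e)_\m$ has no free summand either, by lifting a hypothetical summand to a graded map. So the splitting numbers coincide termwise.

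For the denominators, I would observe that $\gamma(R) = \gamma(R_\m)$, exactly the identity already used in the Hilbert--Kunz corollary following Proposition~\ref{prop graded betti}. Concretely, for the F-finite domain $R$, $\gamma(R) = \log_p[F_*K:K]$ where $K$ is the fraction field, and the same computation applies to $R_\m$ since $R_\m$ and $R$ share this fraction field. Combining the two equalities yields
\[
\s(R) = \lim_{e\to\infty} \frac{a_e(R)}{p^{e\gamma(R)}} = \lim_{e\to\infty} \frac{a_e(R_\m)}{p^{e\gamma(R_\m)}} = \s(R_\m).
\]
I do not expect any serious obstacle: the only nontrivial input is Proposition~\ref{prop graded r_F}, which has already been established, and the equality of $\gamma$ invariants, which is an immediate consequence of the positively graded structure and the fact that $R$ is an F-finite domain by hypothesis.
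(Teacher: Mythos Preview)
Your argument is correct and is in fact more direct than the paper's. The paper argues by cases: it cites \cite[Lemma~4.2]{LyubeznikSmith} to see that the non-strongly F-regular locus is defined by a homogeneous ideal, so $R$ is strongly F-regular if and only if $R_\m$ is; in the non-strongly F-regular case both signatures vanish, and in the strongly F-regular case the paper routes through the F-splitting ratio via $\sr(R_\m)=\gamma(R_\m)=\gamma(R)=\sr(R)$ and then $\s(R)=r_F(R)=r_F(R_\m)=\s(R_\m)$. Your approach bypasses all of this: since Proposition~\ref{prop graded r_F} already gives the termwise equality $a_e(R)=a_e(R_\m)$, and $\gamma(R)=\gamma(R_\m)$, the limits defining $\s(R)$ and $\s(R_\m)$ coincide term by term. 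This avoids the Lyubeznik--Smith citation and the case split entirely.

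One small correction: you write that ``$R$ is an F-finite domain by hypothesis,'' but Proposition~\ref{prop graded r_F} does not assume $R$ is a domain---only that it is a positively graded algebra of finite type over an F-finite local ring. The equality $\gamma(R)=\gamma(R_\m)$ still holds, however: the minimal primes of $R$ are homogeneous, hence all contained in $\m$, and since $\alpha(P)$ is maximized at minimal primes (by Kunz), the maximum defining $\gamma$ is unchanged upon localizing at $\m$. Your citation of the earlier Hilbert--Kunz corollary, where this same identity was invoked without a domain hypothesis, is the right anchor; just drop the fraction-field justification.
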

\begin{proof}
In our assumptions, the ideal defining the non-strongly F-regular locus is homogeneous \cite[Lemma 4.2]{LyubeznikSmith}. If $R$ is not strongly F-regular, then $R_\m$ is also not strongly F-regular; thus, $\s(R) = \s(R_\m) = 0$ in this case. Now assume $R$ is strongly F-regular. Then $R_\m$ is also strongly F-regular, and thus $\sr(R_\m) = \gamma(R_\m) = \gamma(R)$. 
Using Proposition~\ref{prop graded r_F}, we conclude that $\sr(R) = \gamma(R)$, and hence $\s(R) = r_F(R) = r_F(R_\m) = \s(R_\m)$. 
\end{proof}

\section{Positivity of F-signature of Cartier algebras and strong F-regularity}

This section is devoted to giving a positive answer to \cite[Question 4.24]{DSPY}. We recall the following condition from \cite{DSPY}. For unexplained notation and terminology we refer to Subection 2.4 of the same article.
\begin{Condition}
We say that $(R,\D)$ satisfies condition $(\dagger)$ if at least one of the following
conditions is satisfied:
\begin{itemize}
\item $\D$ satisfies condition $(*)$, as in \ref{condition *}.
\item $\D=\mathcal{C}^{\a^t}$ for some ideal $\a \subseteq R$ and $t>0$.
\item $R$ is normal and $\D=\mathcal{C}^{(R,\Delta)}$ for some effective $\Q$-divisor $\Delta$.
\end{itemize}
\end{Condition}
Using the same notation as in Section \ref{Section global r_F}, we now recall the definition of global F-signature of a pair $(R,\D)$. Given an F-finite and F-pure ring $R$, and a Cartier algebra $\D$, the F-signature of $(R,\D)$ is
\[
\ds \s(R,\D) = \lim_{e \to \infty} \frac{a_e(R,\D)}{p^{e\gamma(R)}}.
\]
When $\D$ is the full Cartier algebra, we simply write $\s(R)$ for $\s(R,\D)$. In this case, if we also have $\gamma(R) = \sr(R)$, the global F-signature $\s(R)$ coincides with the global F-splitting ratio $r_F(R)$ defined in Section \ref{Section global r_F}. The limit above was shown to exist in \cite[Theorem 4.19]{DSPY}. In the same article, a global version of a result of Blickle, Schwede and Tucker \cite{BST2012}, relating the positivity of $\s(R,\D)$ to the strong F-regularity of the pair $(R,\D)$ was established in this setup.
\begin{Theorem}\cite[Corollary 4.23]{DSPY} \label{signature positive with condition} Let $R$ be an F-finite domain, and let $\D$ be a Cartier algebra satisfying condition $(\dagger)$. Then $\s(R,\D)>0$ if and only if $(R,\D)$ is strongly F-regular
\end{Theorem}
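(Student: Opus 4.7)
The plan is to reduce Theorem~\ref{signature positive with condition} to the local Blickle--Schwede--Tucker theorem \cite{BST2012} by exploiting the uniform convergence of splitting numbers available under condition $(\dagger)$. Concretely, the defining feature of $(\dagger)$ is that an analog of Theorem~\ref{Uniform convergence for Cartier algebras satisfying condition (*)} holds, so that the sequence of functions $P \mapsto a_e(R_P,\D_P)/p^{e\gamma(R_P)}$ converges uniformly on $\Spec(R)$ to $P \mapsto \s(R_P,\D_P)$. For the subcase that $\D$ satisfies $(*)$ this is immediate; for the test-ideal subcase $\D = \mathcal{C}^{\a^t}$ it follows by reduction to a uniform Hilbert--Kunz-type bound on $R/\a^{\lceil tp^e\rceil}$; for the divisor subcase $\D = \mathcal{C}^{(R,\Delta)}$ it follows by lifting to a $(*)$-Cartier algebra on a normal covering ring where $\Delta$ becomes integral.

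For the forward direction I would argue the contrapositive. Suppose $(R,\D)$ is not strongly F-regular, so there exists $P \in \Spec(R)$ with $(R_P,\D_P)$ not strongly F-regular. One may take $P$ to be a generic point of the (closed) non-strongly-F-regular locus, and by specializing if needed arrange that $\gamma(R_P)=\gamma(R)$ (this is automatic at the generic point of $\Spec(R)$, which lies in the non-strongly-F-regular locus when it is nonempty). Localizing any $\D$-equivariant surjection $F^e_*R \twoheadrightarrow R^{\oplus a}$ witnessing $a_e(R,\D) \geq a$ gives a $\D_P$-equivariant surjection $F^e_*R_P \twoheadrightarrow R_P^{\oplus a}$, so $a_e(R,\D) \leq a_e(R_P,\D_P)$ for every $e$. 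Dividing by $p^{e\gamma(R)}=p^{e\gamma(R_P)}$ and passing to the limit yields $\s(R,\D) \leq \s(R_P,\D_P) = 0$ by the local theorem of \cite{BST2012}.

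For the reverse direction, strong F-regularity of $(R,\D)$ means $(R_P,\D_P)$ is strongly F-regular for every $P \in \Spec(R)$, so $\s(R_P,\D_P) > 0$ pointwise by \cite{BST2012}. The uniform convergence discussed in Step~1, combined with the (lower) semi-continuity of each $P \mapsto a_e(R_P,\D_P)/p^{e\gamma(R_P)}$ and the quasi-compactness of $\Spec(R)$, upgrades pointwise positivity to a uniform lower bound $\s(R_P,\D_P) \geq \delta > 0$ on the closed set $\{P : \gamma(R_P) = \gamma(R)\}$. A Stafford-type inequality in the spirit of Theorem~\ref{Stafford's Theorem}, applied to $\D$-equivariant surjections $F^e_*R \twoheadrightarrow R^{\oplus \bullet}$, then produces $a_e(R,\D) \geq \min_P a_e(R_P,\D_P) - \dim(R)$; together with the uniform bound from condition $(\dagger)$ this gives $a_e(R,\D)/p^{e\gamma(R)} \geq \delta - C/p^e$ for $e \gg 0$, whence $\s(R,\D) \geq \delta > 0$.

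The main obstacle is the Stafford-type comparison between the global and local $\D$-splitting numbers: Theorem~\ref{Stafford's Theorem} applies verbatim to ordinary free ranks, but $a_e(R,\D)$ records the maximal rank of a free summand realized by maps coming from $\D$, not by arbitrary $R$-linear maps. Condition $(\dagger)$ is precisely what makes a Forster--Swan-type argument go through: in each of its three subcases one has enough functoriality to lift a minimum-achieving local $\D_P$-splitting to a global $\D$-splitting with controlled loss. Removing this hypothesis (the unconditional result stated in the introduction) therefore requires a genuinely different strategy, which is the content of the rest of this section.
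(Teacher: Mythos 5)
The paper itself supplies no proof of this statement: Theorem~\ref{signature positive with condition} is quoted verbatim as \cite[Corollary~4.23]{DSPY}, and the two sentences following it merely describe the strategy used there, namely establishing the equality $\s(R,\D)=\min\{\s(R_P,\D_P)\mid P\in\Spec(R)\}$ via semi-continuity results that are only known to hold under $(\dagger)$. Your outline tracks that description at a high level, but the place where all the mathematical content must live is exactly the place you wave at. You invoke ``a Stafford-type inequality in the spirit of Theorem~\ref{Stafford's Theorem}'' to get $a_e(R,\D)\geq\min_P a_e(R_P,\D_P)-\dim(R)$, then concede this is the ``main obstacle,'' and then assert without argument that in each subcase of $(\dagger)$ there is ``enough functoriality to lift a minimum-achieving local $\D_P$-splitting to a global $\D$-splitting with controlled loss.'' But Theorem~\ref{Stafford's Theorem} concerns $\frk_R(M)$, the free rank measured against all of $\Hom_R(M,R)$; it does not by itself produce a surjection whose components lie in the submodule $\D_e\subseteq\Hom_R(F^e_*R,R)$, which is what $a_e(R,\D)$ requires. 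That is precisely the hard technical step behind the DSPY result, and asserting $(\dagger)$ supplies ``enough functoriality'' is not a proof. You have also misattributed the role of $(\dagger)$: it enters to furnish the uniform-convergence and semi-continuity inputs from \cite{Polstra2018,PolstraTucker}, not to make a Forster--Swan argument go through.

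Two smaller points. In the forward direction, the parenthetical that the generic point of $\Spec(R)$ ``lies in the non-strongly-F-regular locus when it is nonempty'' is false: $R_{(0)}$ is a field, so $(R_{(0)},\D_{(0)})$ is strongly F-regular whenever some $\D_e\neq 0$, and the non-strongly-F-regular locus is a proper closed set that misses the generic point. Fortunately this is harmless, because the whole discussion of arranging $\gamma(R_P)=\gamma(R)$ is vacuous: $R$ is a domain, so Kunz's formula $\alpha(\p)=\alpha(\q)+\dim(R_\q/\p R_\q)$ forces $\gamma(R_P)=\alpha((0))=\gamma(R)$ for every $P\in\Spec(R)$, and the inequality $a_e(R,\D)\leq a_e(R_P,\D_P)$ already gives $\s(R,\D)\leq\s(R_P,\D_P)=0$ for any $P$ in the non-strongly-F-regular locus. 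Finally, keep in mind that the point of this section is precisely that Theorem~\ref{A} removes $(\dagger)$ by a genuinely different mechanism (Lemma~\ref{Premultiplication Lemma} plus Lemma~\ref{Easy Lemma}), because the semi-continuity route you sketch is not available without it.
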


The way Theorem \ref{signature positive with condition} was proved in \cite{DSPY} was by exploiting the relation 
\[
\ds \s(R,\D) = \min\{\s(R_P,\D_P) \mid P \in \Spec(R)\}.
\]
Since the strong F-regularity of $(R,\D)$ is equivalent to such minimum being positive, this was sufficient. However, the proof of the equality between the global F-signature of $(R,\D)$ and the minimum of the local invariants required some semi-continuity results, that are only known to hold under the additional assumption that $(\dagger)$ holds \cite{Polstra2018,PolstraTucker}. The goal of this section is to show that Theorem \ref{signature positive with condition} is true without assuming $(\dagger)$. In particular, we will provide a direct way to show that the signature of a strongly F-regular pair $(R,\D)$ is positive, without looking at the corresponding invariants in the localizations at prime ideals.

We start with two preparatory lemmas.
\begin{Lemma}\label{Premultiplication Lemma}{\cite[Lemma~3.13c]{BST2012} and \cite[Lemma~4.2]{PolstraTucker}}  Let $R$ be an F-finite normal domain and let $\varphi\in \Hom_R(F^e_*R,R)$. There exists $0\not= z\in R$ such that for all $n\in \Z_{>0}$, and all $\psi\in \Hom_R(F^{ne}_*R,R)$, there exists $r\in R$ such that
\[
z\psi= \varphi^n(F^{ne}_*r-)
\]
where $\varphi^n= \varphi \circ F^e_*\varphi\circ F^{2e}_*\varphi \circ \cdots \circ F^{(n-1)e}_*\varphi$ and $\varphi^n(F^e_*r-)$ is composition of the maps
\[
F^{ne}_*R\xrightarrow{\cdot F^{ne}_*r}F^{ne}_*R\xrightarrow{\varphi^n}R.
\]
\end{Lemma}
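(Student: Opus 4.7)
The plan is to reduce the problem to the generic point of $\Spec(R)$, exploit that $\Hom_R(F^{ne}_*R,R)$ has generic rank one over $F^{ne}_*R$, and then find a single nonzero $z \in R$ that clears denominators uniformly in $n$.

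First I would dispose of the trivial case $\varphi=0$, so assume $\varphi\ne 0$. Let $K$ denote the fraction field of $R$. Because $R$ is F-finite, $K \subseteq F^{ne}_*K$ is a finite field extension, and therefore $\Hom_K(F^{ne}_*K,K)$ is a free $F^{ne}_*K$-module of rank one, a standard fact about finite field extensions. Since $R$ is a normal F-finite domain, $F^{ne}_*R$ is reflexive as an $R$-module, and so $\Hom_R(F^{ne}_*R,R)$ is a reflexive $F^{ne}_*R$-module of generic rank one, that is, a divisorial fractional ideal in $F^{ne}_*K$.

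Next I would verify that $\varphi^n\ne 0$: restricted to the generic point, $\varphi^n_K$ is the composition of $n$ nonzero $K$-linear maps between finite-dimensional $K$-vector spaces, hence nonzero. Therefore $\varphi^n_K$ generates the rank-one $F^{ne}_*K$-module $\Hom_K(F^{ne}_*K,K)$. It follows that, for every $\psi\in\Hom_R(F^{ne}_*R,R)$, there is a unique $w_\psi\in F^{ne}_*K$ with $\psi_K = \varphi^n_K(F^{ne}_*(w_\psi)\cdot-)$ over $K$. The problem then becomes: produce $0\ne z\in R$ such that, uniformly in $n\in\Z_{>0}$ and in $\psi$, the element $F^{ne}_*(z)\,w_\psi$ lies in $F^{ne}_*R$; taking $r:=z\,w_\psi\in R$ will then yield the identity in the statement.

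The main obstacle is exactly this uniformity in $n$. The strategy is to interpret $\Hom_R(F^{ne}_*R,R)$ as a divisorial sheaf on $\Spec(R)$ via Grothendieck duality for the Frobenius, whose divisor class is a fixed combination of $K_R$ and of a divisor controlled by $\varphi^n$. A direct computation shows the divisor associated to $\varphi^n$ has the form $(1+p^e+p^{2e}+\cdots+p^{(n-1)e})D_\varphi$ (up to canonical contributions), so its support is contained in the fixed support of $D_\varphi$. Hence any nonzero $z\in R$ vanishing to sufficiently high order on the support of $D_\varphi$ and on the non-regular locus of $R$ will clear denominators uniformly in $n$. Such a $z$ depends only on $\varphi$ (and on $R$) and produces the required $r=z\,w_\psi$, completing the proof.
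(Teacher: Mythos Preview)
Your overall architecture is the standard one (and the one the cited references use): pass to the fraction field where $\Hom(F^{ne}_*K,K)$ is free of rank one over $F^{ne}_*K$, write $\psi$ as $\varphi^n$ pre-multiplied by some $w_\psi\in K$, and then clear denominators. The gap is in the last paragraph, and it stems from a miscomputation of what ``multiplying $\psi$ by $z$'' does on the $F^{ne}_*R$-side.

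Since $\varphi^n$ and $\psi$ are $R$-linear, one has $z\cdot\psi=\psi\big(F^{ne}_*(z^{p^{ne}})\cdot-\big)$, so the correct candidate is $r=z^{p^{ne}}w_\psi$, not $r=zw_\psi$. This is not a cosmetic point: it is precisely the $p^{ne}$-th power that makes the uniformity in $n$ work. You correctly observe that $D_{\varphi^n}=\frac{p^{ne}-1}{p^e-1}D_\varphi$, whose coefficients blow up with $n$; knowing only that the \emph{support} is fixed does not let a single $z$ of fixed vanishing order dominate these coefficients. What saves you is that $z^{p^{ne}}$ vanishes to order $p^{ne}\cdot\operatorname{ord}(z)$, so the condition $z^{p^{ne}}w_\psi\in R$ becomes $\operatorname{ord}(z)\ge \frac{p^{ne}-1}{p^{ne}(p^e-1)}\operatorname{ord}(D_\varphi)$ at each height-one prime, and the right-hand side is bounded above by the $n$-independent quantity $\frac{1}{p^e-1}\operatorname{ord}(D_\varphi)$. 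In the language of the references this is exactly the identity $\Delta_{\varphi^n}=\Delta_\varphi$ for the normalized divisor $\Delta_\varphi:=\frac{1}{p^e-1}D_\varphi$, together with $\Delta_{z\psi}=\Delta_\psi+\frac{p^{ne}}{p^{ne}-1}\operatorname{div}(z)\ge\operatorname{div}(z)$; one then simply takes any nonzero $z$ with $\operatorname{div}(z)\ge\Delta_\varphi$. Once you insert the $z^{p^{ne}}$ correction and replace the ``fixed support'' sentence with this normalized-divisor inequality, your argument becomes complete and coincides with the approach in the paper's cited sources.
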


\begin{Lemma}\label{Easy Lemma} Let $R$ be a strongly F-regular F-finite domain. Then there exists $\varepsilon>0$ such that for all $e\in\Z_{>0}$, $a_e(R)\geq \varepsilon \rank(F^e_*R)$.
\end{Lemma}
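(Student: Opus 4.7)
The plan is to reduce the statement to the existence and positivity of the global F-splitting ratio established in Theorem~\ref{Global F-splitting ratio exists}, after first checking that $\sr(R)$ equals $\gamma(R)$.

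First I would verify that $\sr(R_P)=\gamma(R)$ for every $P\in\Spec(R)$. Since strong F-regularity descends to localizations, $R_P$ is strongly F-regular for every $P$, so every nonzero element of $R_P$ lies outside $I_e$ for some $e$; hence the splitting prime of $R_P$ is $(0)$ and $\sdim(R_P)=\dim R_P=\height P$. Consequently
\[
\sr(R_P)=\sdim(R_P)+\alpha(PR_P)=\height P+\alpha(P).
\]
Kunz's theorem applied to the F-finite local domain $R_P$ gives $\rank_{R_P}(F_*R_P)=p^{\height P+\alpha(P)}$, and this equals $[F_*K:K]=p^{\alpha((0))}$, where $K$ is the fraction field of $R$. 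Since $R$ is a domain, $\alpha((0))=\gamma(R)$, and therefore $\sr(R_P)=\gamma(R)$ for all $P\in\Spec(R)$. Combining this with Theorem~\ref{Global F-splitting ratio exists}~(1) yields $\sr(R)=\gamma(R)$.

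Next I would show that $r_F(R)>0$. Strong F-regularity implies F-purity, and for F-finite rings F-purity is equivalent to F-splitness (Proposition~\ref{proposition globalVSlocal F-pure} and the standard fact that purity is split for finitely presented modules), so there is an onto $R$-linear map $F^e_*R\to R$ for every $e\geq 1$. By Theorem~\ref{Global F-splitting ratio exists}~(3), this forces $r_F(R)>0$.

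Finally, because $R$ is an F-finite domain, $\rank(F^e_*R)=[F^e_*K:K]=p^{e\gamma(R)}=p^{e\sr(R)}$, so by Theorem~\ref{Global F-splitting ratio exists}~(2) we have $a_e(R)/\rank(F^e_*R)\to r_F(R)>0$. In particular there is $e_0$ such that $a_e(R)\geq\tfrac{1}{2}r_F(R)\rank(F^e_*R)$ for all $e\geq e_0$. For each of the finitely many remaining exponents $1\leq e<e_0$, the F-split property of $R$ forces $a_e(R)\geq 1$, so $a_e(R)/\rank(F^e_*R)\geq 1/p^{e\gamma(R)}>0$. Taking $\varepsilon$ to be the minimum of $r_F(R)/2$ and these finitely many positive ratios completes the proof. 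The main (and only genuine) obstacle is the identification $\sr(R)=\gamma(R)$; once this is in place the conclusion is immediate from the machinery already built.
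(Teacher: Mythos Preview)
Your argument is correct, and its final step---take half the positive limit for large $e$ and the finitely many positive ratios for small $e$---is exactly what the paper does. The difference lies in how you obtain the positive limit. The paper simply cites \cite[Theorem~4.16]{DSPY} to get $\s(R)>0$ and is done in two lines. You instead work inside the present paper: you first argue that $\sr(R_P)=\gamma(R)$ for all $P$, deduce $\sr(R)=\gamma(R)$ from Theorem~\ref{Global F-splitting ratio exists}(1), and then invoke Theorem~\ref{Global F-splitting ratio exists}(3) to obtain $r_F(R)>0$. Since $\sr(R)=\gamma(R)$ forces $r_F(R)=\s(R)$, you have rederived the same positive constant by a longer internal route. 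This buys self-containment at the cost of extra work; the paper's citation is shorter but leans on the earlier article.

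One small point of precision: the identity you need, $\height(P)+\alpha(P)=\alpha((0))$ for every prime $P$ in an F-finite domain, is indeed standard, but it is not ``Kunz's theorem'' in the usual sense (which concerns flatness of Frobenius and regularity). What you are using is the well-known fact that $P\mapsto\alpha(P)+\dim R_P$ is locally constant on $\Spec(R)$ for F-finite rings; on a domain this gives the constant value $\alpha((0))=\gamma(R)$. Your equation $\rank_{R_P}(F_*R_P)=p^{\height P+\alpha(P)}$ then follows because both sides equal $p^{\alpha((0))}$, not because Kunz's theorem computes the rank directly. With that attribution adjusted, the proof stands.
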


\begin{proof} As $R$ is strongly F-regular, $\s(R)>0$ by \cite[Theorem~4.16]{DSPY}. Hence, there exists $e'\in\Z_{>0}$ such that for all $e> e'$, $a_e(R)/\rank(F^e_*R)\geq \s(R)/2$. Let 
\[\varepsilon=\min\left\{\frac{a_1(R)}{\rank(F_*R)},..., \frac{a_{e'}(R)}{\rank(F^{e'}_*R)}, \frac{\s(R)}{2}\right\}.
\]
Then $a_e(R)\geq \varepsilon \rank(F^e_*R)$ for all $e\in\Z_{>0}$.
\end{proof}


The following theorem extends \cite[Theorem 2.24]{DSPY}, giving a positive answer to \cite[Question 4.24]{DSPY}.

\begin{Theorem}\label{A} Let $R$ be an F-finite domain and let $\D$ be a Cartier algebra. Then $(R,\D)$ is strongly F-regular if and only if $\s(R,\D)>0$.
\end{Theorem}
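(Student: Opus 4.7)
The direction $\s(R, \D) > 0 \Rightarrow (R, \D)$ strongly F-regular is handled by localization. If $(R, \D)$ is not strongly F-regular, then $(R_P, \D_P)$ fails to be so at some prime $P$, and the local equivalence of \cite{BST2012} gives $\s(R_P, \D_P) = 0$. Since any $\D$-free summand of $F^e_*R$ localizes to a $\D_P$-free summand of $F^e_*R_P$, we have $a_e(R, \D) \leq a_e(R_P, \D_P)$; combined with $\gamma(R_P) \leq \gamma(R)$, this yields $\s(R, \D) \leq \s(R_P, \D_P) = 0$, a contradiction.

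For the converse, assume $(R, \D)$ is strongly F-regular. Then $R$ itself is strongly F-regular and hence a normal domain. The plan is to turn ordinary free summands of $F^e_*R$, of which there are many by Lemma~\ref{Easy Lemma}, into $\D$-free summands via a two-step premultiplication argument. Applying strong F-regularity of $(R, \D)$ to a nonzero non-unit $c$ yields $\varphi \in \D_e$ with $\varphi(F^e_*c) = 1$ and $e \geq 1$; setting $\varphi_0 = \varphi \bullet c \in \D_{e_0}$ with $e_0 = e$ gives $\varphi_0(F^{e_0}_*1) = 1$. Apply Lemma~\ref{Premultiplication Lemma} to $\varphi_0$: there exists a nonzero $z \in R$ such that for every $n \geq 1$ and every $\psi \in \Hom_R(F^{ne_0}_*R, R)$, one has $z\psi = \varphi_0^n(F^{ne_0}_*r \cdot -) = \varphi_0^n \bullet r$ for some $r \in R$; in particular $z\psi \in \D_{ne_0}$. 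Applying strong F-regularity once more to $c = z$ yields $\tau \in \D_{e_1}$ with $\tau(F^{e_1}_*z) = 1$.

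By Lemma~\ref{Easy Lemma}, there exists $\varepsilon > 0$ with $a_{ne_0}(R) \geq \varepsilon \rank(F^{ne_0}_*R) = \varepsilon p^{ne_0 \gamma(R)}$ for every $n$. Writing $F^{ne_0}_*R \cong R^{\oplus a_n} \oplus M_n$ with $a_n = a_{ne_0}(R)$ gives a split surjection $\Phi_n = (\psi_1, \ldots, \psi_{a_n}): F^{ne_0}_*R \twoheadrightarrow R^{\oplus a_n}$. By the previous paragraph, each $z\psi_i$ lies in $\D_{ne_0}$, so $\tau \bullet (z\psi_i) \in \D_{e_1+ne_0}$, and the combined map
\[
\Psi_n = (\tau \bullet (z\psi_1), \ldots, \tau \bullet (z\psi_{a_n})) = \tau^{\oplus a_n} \circ F^{e_1}_*(z \Phi_n) : F^{e_1+ne_0}_*R \to R^{\oplus a_n}
\]
has image $(\tau(F^{e_1}_*(zR)))^{\oplus a_n}$. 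Since $\tau$ is $R$-linear and $\tau(F^{e_1}_*z) = 1$, one computes $\tau(F^{e_1}_*(z s^{p^{e_1}})) = s$ for every $s \in R$, so $\tau(F^{e_1}_*(zR)) = R$ and $\Psi_n$ is surjective. Hence $a_{e_1+ne_0}(R, \D) \geq a_n \geq \varepsilon p^{ne_0 \gamma(R)}$, and dividing by $p^{(e_1+ne_0)\gamma(R)}$ and passing to $n \to \infty$ along the subsequence $\{e_1 + ne_0\}$ of the convergent sequence defining $\s(R, \D)$ \cite[Theorem~4.19]{DSPY} yields $\s(R, \D) \geq \varepsilon/p^{e_1 \gamma(R)} > 0$. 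The main obstacle is producing $\Psi_n$ with components in $\D$: premultiplication by $z$ pushes arbitrary classical splittings into $\D_{ne_0}$ (by Lemma~\ref{Premultiplication Lemma}) at the cost of losing surjectivity, and postcomposing with the $\D$-splitting $\tau$ of $z$ (whose existence is guaranteed by strong F-regularity of the pair) restores surjectivity. This direct argument avoids the semicontinuity-based methods of \cite{DSPY} that required condition $(\dagger)$.
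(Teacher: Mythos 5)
Your proof is correct and follows the same strategy as the paper's: the forward direction by localization; the reverse direction by combining Lemma~\ref{Easy Lemma} (to get many ordinary splittings), Lemma~\ref{Premultiplication Lemma} (to push them into $\D$ up to multiplication by $z$), and a $\D$-splitting $\tau$ of $z$ (which exists by strong F-regularity of the pair) to recover surjectivity. The only cosmetic difference is that the paper introduces integers $\ell_e = \rank(F^e_*R)/p^{e_0}$ to name the free summand's rank explicitly, whereas you bound $a_{ne_0}(R)$ directly by $\varepsilon p^{ne_0\gamma(R)}$, which is slightly cleaner.
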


\begin{proof} If $(R,\D)$ is not strongly F-regular, then there exists $P\in\Spec(R)$ such that $(R_P,\D_P)$ is not strongly F-regular. Since $a_e(R,\D)\leq a_e(R_P,\D_P)$, we get $\s(R,\D)\leq \s(R_P,\D_P)=0$.

Conversely, suppose that $(R,\D)$ is strongly F-regular. Then $R$ is strongly F-regular and by Lemma~\ref{Easy Lemma} there exists $\varepsilon>0$ such that $a_e(R)\geq \varepsilon\rank(F^e_*R)$ for all $e\in \Z_{>0}$. Let $e_0\in\Z_{>0}$ be such that $\varepsilon\geq \frac{1}{p^{e_0}}$. If $\rank(F^e_*R)=1$ for each $e\in\Z_{>0}$, then $R$ is a perfect field and there is nothing to prove. We assume $R$ is not a perfect field so that, for all $e\geq e_0$, $p^{e_0}$ divides $\rank(F^e_*R)$. Let $\ell_e=\rank(F^e_*R)/p^{e_0}$, so that $a_e(R)\geq \ell_e$ for each $e\in\Z_{>0}$.

Let $e_1>0$ be such that $a_{e_1}(R,\D)>0$, and let $\varphi\in\D_{e_1}$ be a non-zero map. Let $z$ be as in Lemma~\ref{Premultiplication Lemma}. In particular, for each $n\in\Z_{>0}$ and for each $\psi\in\Hom_R(F^{ne_1}_*R, R)$, the map $z\psi$ belongs to $\D_{ne_1}$. Consider integers of the form $e=ne_1\geq e_0$. As $a_e(R)\geq \ell_e$, we can write $F^e_*R\cong R^{\oplus \ell_e}\oplus M_e$ for some $R$-module $M_e$. Let $\lambda_1,\ldots,\lambda_{\ell_e}\in F^e_*R$ form a basis for the free summand $R^{\oplus \ell_e}$ of $F^e_*R$. Denote by $\tilde{\lambda}_i:F^e_*R \rightarrow R$ the $R$-linear map defined by $\lambda_i\mapsto 1$, $\lambda_j\mapsto 0$ for all $j\not= i$, and $x\mapsto 0$ for all $x\in M_e$. 

We chose $0 \not=z\in R$ such that $z\tilde{\lambda}_i\in \D_{e}$, and $z\tilde{\lambda}_i$ maps $\lambda_i\mapsto z$ and $\lambda_j\mapsto 0$ for all $j\not=i$. As $(R,\D)$ is strongly F-regular, there exists $e_2\in\Z_{>0}$ and $\gamma\in \D_{e_2}$ such that $\gamma(F^{e_2}_*z)=1$. Then the $R$-linear maps $\gamma_i:=\gamma\circ F^{e_2}_*z\tilde{\lambda}_i: F^{e+e_2}_*R\rightarrow R$ are elements of $\D_{e+e_2}$ such that $F^{e_2}_*\lambda_i\mapsto 1$ and $F^{e_2}_*\lambda_j\mapsto 0$ for all $j\not=i$. Therefore, for each $e=ne_1\geq e_0$, we have
\[
a_{ne_1+e_2}(R,\D)\geq \ell_{ne_1}= \frac{\rank(F^{ne_1}_*R)}{p^{e_0}}=\frac{\rank(F^{ne_1+e_2}_*R)}{p^{e_0}\rank(F^{e_2}_*R)},
\]
and thus
\[
\s(R,\D)=\lim_{e'\in \Gamma_\D\rightarrow \infty}\frac{a_{e'}(R,\D)}{\rank(F^{e'}_*R)}=\lim_{n\rightarrow \infty}\frac{a_{ne_1+e_2}(R,\D)}{\rank(F^{ne_1+e_2}_*R)}\geq \frac{1}{p^{e_0}\rank(F^{e_2}_*R)}>0. \qedhere
\]
\end{proof}

\begin{Remark} As pointed out above, the proof of Theorem \ref{signature positive with condition} contained in \cite{DSPY} requires the extra assumption that $(\dagger)$ holds, because it is based on the equality $\s(R,\D) = \min\{\s(R_P,\D_P) \mid P \in \Spec(R)\}$. Theorem~\ref{A} settles the positivity of $\s(R,\D)$ for strongly F-regular pairs $(R,\D)$, but it does not indicate any progress in the direction of showing that $\s(R,\D)$ is equal to the minimum of the local invariants. In particular, it does not show the existence of a prime $P\in \Spec(R)$ such that $\s(R,\D)=\s(R_P,\D_P)$. 
\end{Remark}

\bibliographystyle{alpha}
\bibliography{References}

\end{document}